\numberwithin{equation}{section}
\def\BState{\State\hskip-\ALG@thistlm}
\newcolumntype{L}[1]{>{\raggedright\arraybackslash}p{#1}}
\newcolumntype{C}[1]{>{\centering\arraybackslash}p{#1}}
\newcolumntype{R}[1]{>{\raggedleft\arraybackslash}p{#1}}
\newcommand{\kronecker}{\raisebox{1pt}{\ensuremath{\:\otimes\:}}}
\newtheorem{theorem}{Theorem}[section]
\newtheorem{lemma}{Lemma}[section]
\newtheorem{definition}{Definition}[section]
\newtheorem{example}{Example}[section]
\newtheorem{remark}{Remark}[section]
\newtheorem{corollary}{Corollary}[section]
\newtheorem{proposition}{Proposition}[section]
\journal{Arxiv}
\newcommand{\D}{{\mathrm D}}
\newcommand{\mc}[1]{\mathcal {#1}}
\newcommand{\n}{{*\!\!_n}}
\newcommand{\p}{{*\!\!_p}}
\newcommand{\s}{{*\!\!_s}}
\newcommand{\m}{{*\!_m}}
\newcommand{\tp}[1]{\textup{#1}}
\newcommand{\ind}[1]{\mathrm{ind}({#1})}  % index
\newcommand{\rg}{{\mathscr{R}}}
\newcommand{\nl}{{\mathscr{N}}}
\newcommand{\ep}{\scriptsize\mbox{\textcircled{$\dagger$}}}
\newcommand{\core}{\scriptsize\mbox{\textcircled{\#}}}
\begin{document}

\begin{frontmatter}
\title{
{\bf
Computing Tensor Generalized bilateral inverses}}

\author{Ratikanta Behera$^{a,1}$, Jajati Keshari Sahoo$^{b,2}$, Predrag S. Stanimirovi\'c$^{c,d,3}$, Alena Stupina$^{d,4}$,\\ Artem Stupin$^{d,5}$
}
\vspace{.1cm}

\address{
$^{a}$Department of Computational and Data Sciences, Indian Institute of Science, Bangalore, India.\\
\vspace{.1cm}
$^{b}$Department of Mathematics,\\
Birla Institute of Technology $\&$ Science Pilani, K.K. Birla Goa Campus, India\\
\vspace{.1cm}
$^{c}$University of Ni\v s, Faculty of Sciences and Mathematics, Ni\v s, Serbia\\
%$^{d}$ Laboratory "Hybrid Methods of Modelling and Optimization in Complex Systems", Siberian Federal University, Prosp. Svobodny 79, 660041 Krasnoyarsk, Russian Federation\\

\vspace{.1cm}
$^{d}$Laboratory "Hybrid Methods of Modelling and Optimization in Complex Systems", Siberian Federal University, Prosp. Svobodny 79, 660041 Krasnoyarsk, Russian Federation; h677hm@gmail.com\\
\vspace{.1cm}
\textit{E-mails}: $^{1}$\texttt{ratikanta@iisc.ac.in}, $^{2}$\texttt{jksahoo@goa.bits-pilani.ac.in}, $^{3}$\texttt{pecko@pmf.ni.ac.rs}, $^{4}$\texttt{h677hm@gmail.com}, $^{5}$\texttt{arstupin@gmail.com}
}

\begin{abstract}
We introduce tensor generalized bilateral inverses (TGBIs) under the Einstein tensor product as an extension of generalized bilateral inverses (GBIs) in the matrix environment. Moreover, the TBGI class includes so far considered composite generalized inverses (CGIs) for matrices and tensors. Applications of TBGIs for solving multilinear systems are presented.
The characterizations and representations of TGBI were studied and verified using a specific algebraic approach. Further, a few characterizations of known CGIs (such as CMP, DMP, MPD, MPCEP, and  CEPMP) are derived. The main properties of the TGBIs ware exploited and verified through numerical examples.
\end{abstract}

\begin{keyword}
Generalized bilateral inverses; Einstein product; Outer inverse; Poission Equations. \\
\vspace{.3cm}
{\bf AMS Subject Classifications: 15A09; 15A10; 15A69}
\end{keyword}
\end{frontmatter}

\section{Introduction}\setcounter{equation}{0}
\subsection{Background and motivation}

%We restate necessary definitions, notations and known results which will be utilized to introduce and derive our main results.
 As usual, $\mathbb{C}^{I_1\times\cdots\times I_n}$ (resp. $\mathbb{R}^{I_1\times\cdots\times I_n}$) denotes the set containing all orders $n$ complexes (resp. real) tensors.
 The entries of tensor $\mc{M}$ are represented by $(\mc{M})_{i_1...i_n}$.
 Here we use standard tensor notation
\begin{equation*}
 \textbf{M}(m) = M_1 \times\cdots\times M_m,\ \ M_1,\ldots,M_m\in \mathbb N. %,\ \ \textbf{n}(k) = \left\{n_1,n_2,\ldots, n_k |\ 1\leq n_j \leq N_j,~ j = 1, \ldots, k\right\}.
\end{equation*}
  The addition of tensors $\mc{S}, ~\mc{D}\in \mathbb{C}^{I_1\times\cdots\times I_m \times J_1 \times\cdots\times J_n }=\in \mathbb{C}^{I(m),J(n)}$ is defined as
\begin{equation}\label{Eins1}
(\mc{S} + \mc{D})_{i_1...i_mj_1...j_n} =(\mc{S})_{{i_1...i_m}{j_1...j_n}} + (\mc{D})_{{i_1...i_m}{j_1...j_n}}.
\end{equation}
 {Tensor generalized inverses under the {\it Einstein product} has been investigated}  (see \cite{ein,JunWei20,lai,MaLiPre19,WeiMPinverse,WangWei2022}). 
The Einstein product of $\mc{S} \in \mathbb{C}^{\textbf{I}(m) \times \textbf{K}(p)}$ and $\mc{D} \in \mathbb{C}^{\textbf{K}(p) \times \textbf{J}(n)}$, denoted by $\mc{S}\,\p\,\mc{D}$, is defined as
\begin{equation}\label{Eins}
(\mc{S}\,\p\,\mc{D})_{ {i_1...i_mj_1...j_n}}
=\displaystyle\sum_{k_1,\ldots,k_p}(\mc{S})_{{i_1...i_m}{k_1...k_p}}(\mc{D})_{{k_1...k_p}{j_1...j_n}}\in  \mathbb{C}^{\textbf{I}(m) \times \textbf{J}(n)}.
\end{equation}
The range and null space of $\mc{D}\in\mathbb{C}^{\textbf{M}(p)\times \textbf{N}(s)}$ were defined in \cite{WeiDrazin,Stan18} as follows:
\begin{equation*}
\mathscr{R}(\mc{D}) = \left\{\mc{D}\,\s\, \mc{E}:~\mc{E}\in\mathbb{C}^{\textbf{N}(s)}\right\}\mbox{ and } \mathscr{N}(\mc{D})=\left\{\mc{F}\in\mathbb{C}^{ {\textbf{N}(s)}}:~\mc{D}\,\p\,\mc{F}=\mc{O}\in\mathbb{C}^{\textbf{M}(p)}\right\},
\end{equation*}
where $\mc{O}\in\mathbb{C}^{\textbf{M}(p)}$ denotes the zero tensor of the order $\textbf{M}(p)$.
The minimal nonnegative integer $k$ satisfying $\mbox{dim}(\mathscr{R}(\mc{D}^k))= \mbox{dim}(\mathscr{R}(\mc{D}^{k+1}))$ is termed the index \cite{AsishRJ} of $\mc{D} \in \mathbb{C}^{\textbf{I}(n)\times \textbf{I}(n)}$ and denoted by $\mathrm{ind}(\mc{D}).$
In view of the tensor range and  {null} space the authors of \cite{Stan18} introduced the result restated in Lemma \ref{range-stan}.

\begin{lemma} {\em \cite[Lemma 2.2.]{Stan18}}\label{range-stan}
For $\mc{P}\in \mathbb{C}^{\textbf{K}(k)\times \textbf{M}(m)}$, $\mc{Q}\in \mathbb{c}^{\textbf{K}(k)\times \textbf{N}(n)}$
$$\mathscr{R}(\mc{Q})\subseteq\mathscr{R}(\mc{P})\Longleftrightarrow \exists \mc{X}\in \mathbb{C}^{\textbf{M}(m)\times \textbf{N}(n)}:\ \mc{Q}=\mc{P}\,\m\,\mc{X}.$$
\end{lemma}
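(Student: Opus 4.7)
The plan is to prove the two implications separately. The reverse direction $(\Leftarrow)$ will be a one-line consequence of associativity of the Einstein product, while the forward direction $(\Rightarrow)$ requires a column-by-column construction of $\mc{X}$ using the standard basis tensors of $\mathbb{C}^{\textbf{N}(n)}$.

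For $(\Leftarrow)$, assume $\mc{Q}=\mc{P}\,\m\,\mc{X}$ for some $\mc{X}\in\mathbb{C}^{\textbf{M}(m)\times\textbf{N}(n)}$. Pick an arbitrary $\mc{F}\in\mathscr{R}(\mc{Q})$, so by definition $\mc{F}=\mc{Q}\,\n\,\mc{E}$ for some $\mc{E}\in\mathbb{C}^{\textbf{N}(n)}$. Then
$$\mc{F}=(\mc{P}\,\m\,\mc{X})\,\n\,\mc{E}=\mc{P}\,\m\,(\mc{X}\,\n\,\mc{E})\in\mathscr{R}(\mc{P}),$$
using the hypothesis and associativity of the Einstein product across the disjoint contractions over $\textbf{M}(m)$ and $\textbf{N}(n)$.

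For $(\Rightarrow)$, the plan is to build $\mc{X}$ slice-by-slice. For every multi-index $(j_1,\ldots,j_n)$ with $j_t\in\{1,\ldots,N_t\}$, let $\mc{E}^{(j_1\ldots j_n)}\in\mathbb{C}^{\textbf{N}(n)}$ be the standard basis tensor whose only nonzero entry is a $1$ at position $(j_1,\ldots,j_n)$. Then $\mc{Q}\,\n\,\mc{E}^{(j_1\ldots j_n)}$ is precisely the corresponding fiber of $\mc{Q}$ and lies in $\mathscr{R}(\mc{Q})\subseteq\mathscr{R}(\mc{P})$ by hypothesis, hence there exists $\mc{Y}^{(j_1\ldots j_n)}\in\mathbb{C}^{\textbf{M}(m)}$ with $\mc{Q}\,\n\,\mc{E}^{(j_1\ldots j_n)}=\mc{P}\,\m\,\mc{Y}^{(j_1\ldots j_n)}$. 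I would then assemble $\mc{X}\in\mathbb{C}^{\textbf{M}(m)\times\textbf{N}(n)}$ by setting $(\mc{X})_{i_1\ldots i_m j_1\ldots j_n}:=(\mc{Y}^{(j_1\ldots j_n)})_{i_1\ldots i_m}$ and verify $\mc{Q}=\mc{P}\,\m\,\mc{X}$ entrywise by reading off the $(k_1\ldots k_k,j_1\ldots j_n)$-entry on both sides and recognizing the Einstein sum over the $m$ indices of $\textbf{M}(m)$.

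The only real obstacle is bookkeeping: keeping track of the three interleaved index groups $\textbf{K}(k),\textbf{M}(m),\textbf{N}(n)$ when checking the entrywise identity. A cleaner variant, which I would likely include as a remark, is to invoke the standard matricization $\mathbb{C}^{\textbf{K}(k)\times\textbf{M}(m)}\cong\mathbb{C}^{(K_1\cdots K_k)\times(M_1\cdots M_m)}$ that converts Einstein products into ordinary matrix products; under this isomorphism, the claim reduces immediately to the classical matrix fact $\mathscr{R}(B)\subseteq\mathscr{R}(A)\Leftrightarrow B=AX$ and sidesteps all multi-index juggling.
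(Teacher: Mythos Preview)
Your argument is correct in both directions, and the matricization remark is indeed the cleanest way to collapse the multi-index bookkeeping. However, there is nothing to compare against: the paper does not supply its own proof of this lemma but merely restates it from \cite[Lemma 2.2]{Stan18}, so your proposal stands on its own as a valid verification of a cited result.
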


Tensor generalized inverses (TGIs) have significantly influenced various science fields and, engineering areas, as well as theoretical and applied mathematics \cite{AsishRJ,JSRBPS,sun2018}.
Various TGIs are recently introduced.
Brazell {\it et al}. \cite{BrazellT} defined the tensor inverse under the Einstein product.
Then Sun {\em et al.} in \cite{WeiMPinverse} and, afterwards, Behera and Mishra in \cite{Behera} extended the notion of the tensor inverse to TGIs.
The following tensor equations are used in classifications of TGIs:
Consider a non empty subset   {$\vartheta\subseteq\{1,2,3,4,\ldots\}$. 
A tensor $\mc{Y}$} satisfying the equations $(i)$ for each $i\in \vartheta $ is termed as a $\{\vartheta\}$-inverse of $\mc{D}$.
Such an inverse is denoted by $\mc{D}^{(\vartheta)}$ while $\mc{D}\{\vartheta\}$ is notation for all $\{\vartheta\}$-inverses of $\mc{D}.$
More specifically, generalized inverses for a tensor $\mc{D}$ are presented in Table \ref{Table1T}.

\begin{table}[!htb]
    \centering
   \caption{Recapitulation of generalized inverses}\label{Table1T}
   \vspace*{0.15cm}
		\begin{tabular}{|L{9cm}|L{4.3cm}|C{1.2cm}|}
		\hline
		 {TGIs} & Equations & Notation\\
		\hline
		 Inner inverse of $\mc{D} \in \mathbb C^{\textbf{M}(p)\times \textbf{N}(s)}$ \cite{Behera,WeiMPinverse} & (1)~$\mc{D}\,\s\,\mc{Y}\,\p\,\mc{D} = \mc{D}$ &  $\mc{D}^{(1)}$\\\hline
Outer inverse of $\mc{D} \in \mathbb C^{\textbf{M}(p)\times \textbf{N}(s)}$ \cite{Behera,WeiMPinverse} & (2)~ $\mc{Y}\,\p\,\mc{D}\,\s\,\mc{Y} = \mc{Y}$  & $\mc{D}^{(2)}$ \\\hline
Moore-Penrose (M-P) inverse of $\mc{D} \in \mathbb C^{\textbf{M}(p)\times \textbf{N}(s)}$ \cite{WeiMPinverse}  &   (1)~$\mc{D}\,\s\,\mc{Y}\,\p\,\mc{D} = \mc{D}$ ~~ (2)~ $\mc{Y}\,\p\,\mc{D}\,\s\,\mc{Y} = \mc{Y}$ ~~
(3)~$(\mc{D}\,\s\,\mc{Y})^* = \mc{D}\,\s\,\mc{Y}$ ~~ (4)~$(\mc{Y}\,\p\,\mc{D})^* = \mc{Y}\,\p\,\mc{D}$ & $\mc{D}^\dagger$\\\hline
Drazin inverse of $\mc{D} \in \mathbb C^{\textbf{N}(s)\times \textbf{N}(s)}$, $k=\mathrm{ind}(\mc{D})$ \cite{AsishRJ,WeiDrazin}& $(1^k)$~$\mc{Y}\,\s\,\mc{D}^{k+1} = \mc{D}^{k}$
(2)~$\mc{Y}\,\s\,\mc{D}\,\s\,\mc{Y} = \mc{Y}$
(5)~$\mc{D}\,\s\,\mc{Y} = \mc{Y}\,\s\,\mc{D}$ & $\mc{D}^\D$\\\hline
Core-EP inverse of $\mc{D} \in \mathbb C^{\textbf{N}(s)\times \textbf{N}(s)}$, $k=\mathrm{ind}(\mc{D})$ \cite{JSRBPS} & $(1^k)$~$ {\mc{Y}\,\s\,\mc{D}^{k+1}} = \mc{D}^k$
(6)~$\mc{D}\,\s\,\mc{Y}^2 = \mc{Y}$
(3)~$(\mc{D}\,\s\,\mc{Y})^* = \mc{D}\,\s\,\mc{Y}$ & $\mc{D}^{\ep}$\\\hline
	\end{tabular}
\end{table}
We recall the definition of the outer inverse with known range and null space from \cite{Stan18}.
 A tensor $\mc{Y}$ determined by
$\mc{Y}\,\p\,\mc{D}\,\s\,\mc{Y} = \mc{Y},~ \rg(\mc{Y})=\rg(\mc{B}), \mbox{ and } \nl(\mc{Y})=\nl({\mc{C}})$
is the outer inverse of $\mc{D}\in \mathbb{C}^{\textbf{M}(p) \times \textbf{N}(s)}$ with known range $\rg(\mc{B})$ and null space $\nl(\mc{C})$, in notation $\mc{D}^{(2)}_{\rg(\mc{B}),\nl({\mc{C}})}$.
Further, a tensor $\mc{Y}\in \mc{D}\{\vartheta\}$ satisfying $\rg(\mc{Y})=\rg(\mc{B})$ (resp. $\nl(\mc{Y}) = \nl(\mc{C})$ is denoted by $\mc{D}^{(\vartheta)}_{\rg(\mc{B}),*}$ (resp. $\mc{D}^{(\vartheta)}_{*,\nl(\mc{C})}$).
 {In addition, combining the Drazin inverse from \cite{AsishRJ, WeiDrazin}, the M-P inverse from \cite{WeiMPinverse}, and core inverse from \cite{JSRBPS}, the authors of \cite{ma-ten} defined new types of tensor generalized inverses, i.e., the Drazin M-P (DMP) inverse and the core M-P (CMP) inverse as well as M-P Drazin (MPD) inverse and the M-P core (MPC) inverse.
Perturbation bounds for core and core-EP Inverses of tensor under the Einstein product were considered in \cite{MaFil}.}

Recall the composite tensor generalized inverses of tensor $\mc{D}\in \mathbb C^{\textbf{N}(s)\times \textbf{N}(s)}$.
The hybridization of the M-P with the core-EP inverse was introduced by Chen {\it et al}. \cite{mpcep}, which is known as the MPCEP inverse.
The dual MPCEP is termed as the CEPMP inverse. An extension of the MPCEP and CEPMP inverses to a tensor structure with respect to the Einstein product is provided in Definition \ref{Def1MPCEPT}.
\begin{definition}\label{Def1MPCEPT}
 Let $\mc{D}\in \mathbb C^{\textbf{N}(s)\times \textbf{N}(s)}$. Then:
 \begin{enumerate}
     \item[\rm (i)] the {\rm MPCEP} inverse $\mc{D}^{\dagger,\ep}$ of $\mc{D}$ is defined as $\mc{D}^{\dagger,\ep}=\mc{D}^{\dagger}\,\s\,\mc{D}\,\s\,\mc{D}^{\ep}$;
    \item[\rm (ii)] the {\rm CEPMP} inverse $\mc{D}^{\ep,\dagger}$ of $\mc{D}$ is defined as $\mc{D}^{\ep,\dagger}=\mc{D}^{\ep}\,\s\,\mc{D}\,\s\,\mc{D}^{\dagger}$.
 \end{enumerate}
\end{definition}

%\begin{table}[ht!]
%\caption{Main Composite generalized inverses for the tensor $\mc{D}\in \mathbb C^{\textbf{N}(s)\times \textbf{N}(s)}$}
%\vspace*{0.2cm}
%    \label{tab:tab1}
%    \centering
%    \begin{tabular}{|l|c|c|}
%   \hline
% Title     & Definitions & Restriction \\
%     \hline
%  DMP inverse \cite{ma-ten}  & $\mc{D}^{\D,\dagger}=\mc{D}^\D\,\s\,\mc{D}\,\s\,\mc{D}^{\dagger}$ & $\mathrm{ind}(\mc{D}) =k$ \\
%   \hline
%  MPD inverse  \cite{ma-ten} &$\mc{D}^{\dagger,\D}=\mc{D}^{\dagger}\,\s\,\mc{D}\,\s\,\mc{D}^{\D}$ & $\mathrm{ind}(\mc{D}) =k$\\
%   \hline
%    CMP inverse \cite{ma-ten} & $\mc{D}^{c,\dagger}=\mc{D}^{\dagger}\,\s\,\mc{D}\,\s\,\mc{D}^{\D,\dagger}$  & $\mathrm{ind}(\mc{D}) =k$   \\
%   \hline
%  \end{tabular}
%\end{table}

On the other hand, one of the most significant contributions of tensor computations is the solution to multilinear systems. Brazell et al. in \cite{BrazellT} studied  {the tensor decompositions}. Motivated by previous investigations and the rapid popularity of composite inverses on matrices and tensors, our intention is to examine properties and representations for tensor generalized bilateral inverses (TGBIs).

{
The generalized bilateral inverse (GBI) of tensors defined by the Einstein products was proposed and investigated in \cite{PublishedSalemi23}.
The dual and self-duality in the GBI are investigated. Guided by the popularity of GBI in matrices and tensor calculus, our motivation was to continue research in the field of GBI on tensors.
}

\smallskip
Main outcomes of current research are as follows.
\begin{enumerate}
\item[$\bullet$] A new class of TGBIs is introduced as an extension of the generalized bilateral inverses (GBIs) in the matrix environment, and a more general form of composite generalized inverses (CGIs)  {are} considered so far in the matrix and tensor case.
\item[$\bullet$] A few characterizations and representations of CGIs, such as CMP, DMP, MPD, MPCEP, and CEPMP inverse, are derived as corollaries.
\item[$\bullet$] An application in solving multilinear systems is studied.
\end{enumerate}

\subsection{Preliminary Results}
\begin{lemma}{\rm \cite[Lemma 4.1 and Theorem 4.2]{JSRBPS}}\label{lm4.2}
Let $\mc{D}\in\mathbb{C}^{\textbf{N}(s) \times \textbf{N}(s)}$ satisfy $\ind{\mc{D}}=k$.
If a tensor $\mc{X}\in\mathbb{C}^{\textbf{N}(s) \times \textbf{N}(s)}$ is defined by
$\mc{X}\,\s\,\mc{D}^{k+1} = \mc{D}^{k}$ and $\mc{D}\,\s\,\mc{X}^{2} = \mc{X}$, then it satisfies:
\begin{enumerate}
\item[\rm (i)]  $\mc{D}\,\s\,\mc{X} = \mc{D}^{p}\,\s\,\mc{X}^{p},\ p\in\mathbb{N}$.
\item[\rm (ii)] $\mc{X}\,\s\,\mc{D}\,\s\,\mc{X} = \mc{X}$.
\item [\rm (iii)] $\mc{D}^{\ep} = \mc{D}^\D\,\s\,\mc{D}^{l}\,\s\,\left(\mc{D}^l\right)^{\dagger},$ \ $l\geq k$.
\end{enumerate}
\end{lemma}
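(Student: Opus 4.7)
The plan is to prove the three parts in order, handling (i) and (ii) as algebraic manipulations of the two defining equations $\mc{X}\,\s\,\mc{D}^{k+1}=\mc{D}^{k}$ and $\mc{D}\,\s\,\mc{X}^{2}=\mc{X}$, and settling (iii) by verifying that the candidate $\mc{Y}=\mc{D}^{\D}\,\s\,\mc{D}^{l}\,\s\,(\mc{D}^{l})^{\dagger}$ fulfils the three equations characterising $\mc{D}^{\ep}$. For (i), I would induct on $p$: the case $p=1$ is trivial, and for the step I would substitute $\mc{X}=\mc{D}\,\s\,\mc{X}^{2}$ into the leftmost $\mc{X}$ of $\mc{D}^{p}\,\s\,\mc{X}^{p}$ to obtain
$$\mc{D}^{p}\,\s\,\mc{X}^{p}=\mc{D}^{p}\,\s\,\mc{X}\,\s\,\mc{X}^{p-1}=\mc{D}^{p}\,\s\,(\mc{D}\,\s\,\mc{X}^{2})\,\s\,\mc{X}^{p-1}=\mc{D}^{p+1}\,\s\,\mc{X}^{p+1},$$
so the sequence $p\mapsto\mc{D}^{p}\,\s\,\mc{X}^{p}$ is constant and equals its value $\mc{D}\,\s\,\mc{X}$ at $p=1$.

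For (ii) I would strengthen the same trick by iterating $\mc{X}=\mc{D}\,\s\,\mc{X}^{2}$ to obtain $\mc{X}=\mc{D}^{j}\,\s\,\mc{X}^{j+1}$ for every $j\geq 1$. Setting $j=k$ yields $\mc{X}=\mc{D}^{k}\,\s\,\mc{X}^{k+1}$; substituting $\mc{D}^{k}=\mc{X}\,\s\,\mc{D}^{k+1}$ from the first defining equation then gives
$$\mc{X}=\mc{X}\,\s\,\mc{D}^{k+1}\,\s\,\mc{X}^{k+1}=\mc{X}\,\s\,\mc{D}\,\s\,\bigl(\mc{D}^{k}\,\s\,\mc{X}^{k+1}\bigr)=\mc{X}\,\s\,\mc{D}\,\s\,\mc{X},$$
as required.

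For (iii), the three conditions characterising $\mc{D}^{\ep}$ are $\mc{Y}\,\s\,\mc{D}^{k+1}=\mc{D}^{k}$, $\mc{D}\,\s\,\mc{Y}^{2}=\mc{Y}$, and $(\mc{D}\,\s\,\mc{Y})^{*}=\mc{D}\,\s\,\mc{Y}$, so I would verify each of these for $\mc{Y}=\mc{D}^{\D}\,\s\,\mc{D}^{l}\,\s\,(\mc{D}^{l})^{\dagger}$. The preparatory ingredients I would collect are: (a) $\mc{D}\,\s\,\mc{D}^{\D}\,\s\,\mc{D}^{l}=\mc{D}^{l}$ for $l\geq k$, immediate from $\mc{D}^{\D}\,\s\,\mc{D}^{k+1}=\mc{D}^{k}$; (b) $\mathscr{R}(\mc{D}^{k+1})=\mathscr{R}(\mc{D}^{l})$ for $l\geq k$ by index stabilisation, so Lemma \ref{range-stan} produces $\mc{Z}$ with $\mc{D}^{k+1}=\mc{D}^{l}\,\s\,\mc{Z}$; and (c) $\mc{D}^{\D}=\mc{D}^{l}\,\s\,(\mc{D}^{\D})^{l+1}$, obtained by iterating the Drazin identity $\mc{D}^{\D}=\mc{D}\,\s\,(\mc{D}^{\D})^{2}$ together with axiom (5). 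Using (a) I get $\mc{D}\,\s\,\mc{Y}=\mc{D}^{l}\,\s\,(\mc{D}^{l})^{\dagger}$, which is the Hermitian M-P projector and disposes of the conjugate-transpose property. Using (b) and $\mc{D}^{l}\,\s\,(\mc{D}^{l})^{\dagger}\,\s\,\mc{D}^{l}=\mc{D}^{l}$ gives $\mc{Y}\,\s\,\mc{D}^{k+1}=\mc{D}^{\D}\,\s\,\mc{D}^{k+1}=\mc{D}^{k}$. Using (c) I deduce $\mc{D}^{l}\,\s\,(\mc{D}^{l})^{\dagger}\,\s\,\mc{D}^{\D}=\mc{D}^{\D}$, which collapses $\mc{D}\,\s\,\mc{Y}^{2}$ to $\mc{D}^{\D}\,\s\,\mc{D}^{l}\,\s\,(\mc{D}^{l})^{\dagger}=\mc{Y}$.

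The main obstacle I expect is ingredient (c), i.e.\ showing the range inclusion $\mathscr{R}(\mc{D}^{\D})\subseteq\mathscr{R}(\mc{D}^{l})$. Morally it is obvious since $\mc{D}^{\D}$ is a polynomial in $\mc{D}$ multiplied on either side by a high enough power; at the tensor level, however, one needs to iterate $\mc{D}^{\D}=\mc{D}^{\D}\,\s\,\mc{D}\,\s\,\mc{D}^{\D}$ the correct number of times and invoke the commutativity $\mc{D}\,\s\,\mc{D}^{\D}=\mc{D}^{\D}\,\s\,\mc{D}$ to expose $\mc{D}^{l}$ cleanly as a left factor. Once this inclusion is in place, the rest of (iii) is a routine collapse.
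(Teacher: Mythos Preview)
The paper does not supply its own proof of Lemma~\ref{lm4.2}: the result is quoted verbatim as a preliminary from \cite[Lemma~4.1 and Theorem~4.2]{JSRBPS}, so there is nothing to compare your argument against at the level of this manuscript. Assessed on its own terms, your plan is sound. Parts (i) and (ii) are exactly the standard telescoping manipulations of the two axioms $\mc{X}\,\s\,\mc{D}^{k+1}=\mc{D}^{k}$ and $\mc{D}\,\s\,\mc{X}^{2}=\mc{X}$, and they go through cleanly as you wrote them.

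For part (iii) your verification strategy is correct, and the ``obstacle'' you flag is not really one: the identity $\mc{D}^{\D}=\mc{D}^{l}\,\s\,(\mc{D}^{\D})^{l+1}$ follows immediately by iterating $\mc{D}^{\D}=\mc{D}^{\D}\,\s\,\mc{D}\,\s\,\mc{D}^{\D}$ together with axiom~(5), exactly as you outline. The only point you leave implicit is \emph{uniqueness} of the core-EP inverse---you are showing that $\mc{Y}=\mc{D}^{\D}\,\s\,\mc{D}^{l}\,\s\,(\mc{D}^{l})^{\dagger}$ satisfies the three defining equations in Table~\ref{Table1T}, but to conclude $\mc{Y}=\mc{D}^{\ep}$ you need that those equations have at most one solution. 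This is standard (and is part of the content of \cite[Theorem~4.2]{JSRBPS}), but you should state it or cite it explicitly when you write the proof out in full.
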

In \cite[Proposition 2.1 (b)]{ma-ten} it has been proved that the DMP and MPD inverses belong to the class of outer inverses.

\subsection{Outline}
The definitions, main properties, and representations of TBGIs are discussed in Section \ref{SecTGBI}. The properties and representations of the TGBIs are discussed in Subsection \ref{SubSecPropTGBI}. The properties of CMP, DMP, MPD, MPCEP, and CEPMP inverses are discussed in Subsection \ref{SubSecCGI}. The applications of TGBIs for solving multilinear systems  {are} presented in Section \ref{SecSMS}. Numerical examples  {are} presented in Section \ref{SecExamples}. The final observations are presented in  Section \ref{SecConclusion}.

\section{Tensor generalized bilateral inverses}\label{SecTGBI}

In this section, we extend the notion of the bilateral inverse \cite{bilat} from the matrix case  {to} tensors under the framework of the Einstein product.
The tensor version of the bilateral inverse is more general in nature, and the composite generalized inverses can be obtained as a particular case.
 {
\begin{definition} [\cite{PublishedSalemi23}, Definition 24]  \label{dfnBilatT}
    Let $\mc{D}\in \mathbb C^{\textbf{M}(p)\times \textbf{N}(s)}$, and $\mc{X},\mc{Y}\in \mc{D}\{1\}\cup \mc{D}\{2\}$.
The class of TGBIs of $\mc{D}$ is defined as     $\mc{X}\,\p\,\mc{D}\,\s\,\mc{Y}$.
\end{definition}
}
\begin{remark}
If $\mc{X}\,\p\,\mc{D}\,\s\,\mc{Y}$ is a generalized  bilateral inverse of the tensor $\mc{D}\in \mathbb C^{\textbf{M}(p)\times \textbf{N}(s)}$, then $\mc{Y}\,\p\,\mc{D}\,\s\,\mc{X}$ is the dual of $\mc{X}\,\p\,\mc{D}\,\s\,\mc{Y}$.
\end{remark}

\subsection{Properties of tensor generalized bilateral inverses}\label{SubSecPropTGBI}

The following results are verified using Definition \ref{dfnBilatT}.
\begin{proposition}\label{prop2.2}
  Let $\mc{D}\in \mathbb C^{\textbf{M}(p)\times \textbf{N}(s)}$. Then the following holds:
  \begin{enumerate}
      \item[\rm (i)] $\mc{X},\mc{Y}\in \mc{D}\{1,2\}\Longrightarrow \mc{X}\p\mc{D}\,\s\,\mc{Y}\in \mc{D}\{1,2\}\wedge \mc{Y}\,\p\,\mc{D}\,\s\,\mc{X}\in \mc{D}\{1,2\}$.
      \item[\rm (ii)] $\mc{Y}\in \mc{D}\{1\}\wedge \mc{X}\in \mc{D}\{2\}\Longrightarrow \mc{X}\,\p\,\mc{D}\,\s\,\mc{Y}\in \mc{D}\{2\}\wedge \mc{Y}\,\p\,\mc{D}\,\s\,\mc{X}\in \mc{D}\{2\}$.
      \item[\rm (iii)] $\mc{X},\mc{Y}\in \mc{D}\{1\}\Longrightarrow \mc{X}\,\p\,\mc{D}\,\s\,\mc{Y}\in \mc{D}\{1\}\wedge \mc{Y}\,\p\,\mc{D}\,\s\,\mc{X}\in \mc{D}\{1\}$.
  \end{enumerate}
\end{proposition}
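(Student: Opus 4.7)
The plan is to verify each claim by direct substitution into the defining equations $(1)$ $\mc{D}\,\s\,\mc{Z}\,\p\,\mc{D}=\mc{D}$ and $(2)$ $\mc{Z}\,\p\,\mc{D}\,\s\,\mc{Z}=\mc{Z}$, using two ``collapse'' rules inherited from Table \ref{Table1T}: $\mc{D}\,\s\,\mc{W}\,\p\,\mc{D}=\mc{D}$ whenever $\mc{W}\in\mc{D}\{1\}$, and $\mc{W}\,\p\,\mc{D}\,\s\,\mc{W}=\mc{W}$ whenever $\mc{W}\in\mc{D}\{2\}$. All manipulations rest on the associativity of mixed Einstein chains, i.e., $(\mc{A}\,\p\,\mc{B})\,\s\,\mc{C}=\mc{A}\,\p\,(\mc{B}\,\s\,\mc{C})$ whenever both sides are defined, which holds because the two contractions act on disjoint blocks of indices; I would record this fact once at the outset and then parenthesize freely. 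Throughout, $\mc{X}$ and $\mc{Y}$ live in $\mathbb{C}^{\textbf{N}(s)\times\textbf{M}(p)}$, so every intermediate expression is automatically well-typed.

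For part (i), set $\mc{Z}:=\mc{X}\,\p\,\mc{D}\,\s\,\mc{Y}$. The $\{1\}$ equation is handled by the regrouping $\mc{D}\,\s\,\mc{Z}\,\p\,\mc{D}=(\mc{D}\,\s\,\mc{X}\,\p\,\mc{D})\,\s\,\mc{Y}\,\p\,\mc{D}$, collapsing the bracket by $\mc{X}\in\mc{D}\{1\}$ and then the remainder by $\mc{Y}\in\mc{D}\{1\}$. The $\{2\}$ equation is handled by identifying the subword $\mc{D}\,\s\,\mc{Y}\,\p\,\mc{D}$ inside $\mc{Z}\,\p\,\mc{D}\,\s\,\mc{Z}$ and collapsing it to $\mc{D}$; what remains is $(\mc{X}\,\p\,\mc{D}\,\s\,\mc{X})\,\p\,\mc{D}\,\s\,\mc{Y}$, whose inner bracket collapses by $\mc{X}\in\mc{D}\{2\}$ to return $\mc{Z}$. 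The dual statement about $\mc{Y}\,\p\,\mc{D}\,\s\,\mc{X}$ follows by exchanging $\mc{X}\leftrightarrow\mc{Y}$, since the hypothesis is symmetric in them.

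Parts (ii) and (iii) follow the same template. For (iii) only the $\{1\}$ identity is required, and the computation used for (1) in part (i) goes through verbatim using only the $\{1\}$-properties of $\mc{X}$ and $\mc{Y}$. For (ii) only the $\{2\}$ identity must be verified: for $\mc{Z}=\mc{X}\,\p\,\mc{D}\,\s\,\mc{Y}$ the second half of the part-(i) argument applies unchanged, invoking only $\mc{Y}\in\mc{D}\{1\}$ and $\mc{X}\in\mc{D}\{2\}$; for the dual $\mc{Y}\,\p\,\mc{D}\,\s\,\mc{X}$ the mirror grouping of $\mc{Y}\,\p\,\mc{D}\,\s\,\mc{X}\,\p\,\mc{D}\,\s\,\mc{Y}\,\p\,\mc{D}\,\s\,\mc{X}$ first collapses the subword $\mc{D}\,\s\,\mc{Y}\,\p\,\mc{D}$ via $\mc{Y}\in\mc{D}\{1\}$ and then $\mc{X}\,\p\,\mc{D}\,\s\,\mc{X}$ via $\mc{X}\in\mc{D}\{2\}$. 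The proposition contains no real obstacle; the only point requiring attention is the bookkeeping of associativity of the two mixed Einstein products, which is why I prefer to fix it as a lemma-level observation at the beginning and treat all subsequent reparenthesizations as routine.
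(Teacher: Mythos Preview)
Your proposal is correct and matches the paper's approach: the paper does not write out a proof but simply remarks that the results are verified using Definition~\ref{dfnBilatT}, i.e., by exactly the kind of direct substitution into equations $(1)$ and $(2)$ that you outline. Your explicit bookkeeping of associativity and the collapse rules is more detailed than what the paper records, but the underlying argument is the same.
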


Some further properties of TGBIs are investigated in \cite{PublishedSalemi23}
 {
\begin{theorem} {\rm \cite[Corollary  26]{{PublishedSalemi23}}}\label{thm3.2}
  Let $\mc{D}\in \mathbb C^{\textbf{M}(p)\times \textbf{N}(s)}$, $\mc{X}\in \mc{D}\{2\}$ and $\mc{Y}\in \mc{D}\{1\}$.
  Then $\mc{Y}\,\p\,\mc{D}\,\s\,\mc{X}$ is the unique solver of the system tensor equations
  \begin{equation}\label{eq3.1}
\mc{Z}\,\p\,\mc{D}\,\s\,\mc{Z}=\mc{Z}, \ \ \mc{D}\,\s\,\mc{Z}=\mc{D}\,\s\,\mc{X}, \ \  {\mc{Z}\,\p\,\mc{D}} = \mc{Y}\,\p\,\mc{D}\,\s\,\mc{X}\,\p\,\mc{D}.
  \end{equation}
\end{theorem}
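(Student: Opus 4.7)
The plan is to split the argument into existence and uniqueness, both driven purely by the two defining identities $\mc{D}\,\s\,\mc{Y}\,\p\,\mc{D}=\mc{D}$ (coming from $\mc{Y}\in\mc{D}\{1\}$) and $\mc{X}\,\p\,\mc{D}\,\s\,\mc{X}=\mc{X}$ (coming from $\mc{X}\in\mc{D}\{2\}$), together with the associativity of the Einstein contractions $*_p$ and $*_s$, which act on disjoint index groups of $\mc{D}$ and can therefore be freely re-associated in any product built from $\mc{D}$, $\mc{X}$, $\mc{Y}$.

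For existence, I would set $\mc{Z}:=\mc{Y}\,\p\,\mc{D}\,\s\,\mc{X}$ and verify the three equations in increasing order of difficulty. The third equation, $\mc{Z}\,\p\,\mc{D}=\mc{Y}\,\p\,\mc{D}\,\s\,\mc{X}\,\p\,\mc{D}$, is tautological by the very definition of $\mc{Z}$. The second, $\mc{D}\,\s\,\mc{Z}=\mc{D}\,\s\,\mc{X}$, reduces after one re-association to $(\mc{D}\,\s\,\mc{Y}\,\p\,\mc{D})\,\s\,\mc{X}$, which collapses to $\mc{D}\,\s\,\mc{X}$ by the $\{1\}$-inverse property of $\mc{Y}$. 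The first, $\mc{Z}\,\p\,\mc{D}\,\s\,\mc{Z}=\mc{Z}$, expands to $\mc{Y}\,\p\,\mc{D}\,\s\,\mc{X}\,\p\,(\mc{D}\,\s\,\mc{Y}\,\p\,\mc{D})\,\s\,\mc{X}$; the parenthesized block collapses to $\mc{D}$, leaving $\mc{Y}\,\p\,\mc{D}\,\s\,(\mc{X}\,\p\,\mc{D}\,\s\,\mc{X})=\mc{Y}\,\p\,\mc{D}\,\s\,\mc{X}=\mc{Z}$ by the $\{2\}$-inverse property.

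For uniqueness, suppose $\mc{Z}$ is any tensor meeting the three conditions. Substituting the second equation into the first gives $\mc{Z}=\mc{Z}\,\p\,\mc{D}\,\s\,\mc{Z}=\mc{Z}\,\p\,\mc{D}\,\s\,\mc{X}$. Post-multiplying the third equation by $\mc{X}$ via $*_s$ yields $(\mc{Z}\,\p\,\mc{D})\,\s\,\mc{X}=\mc{Y}\,\p\,\mc{D}\,\s\,(\mc{X}\,\p\,\mc{D}\,\s\,\mc{X})=\mc{Y}\,\p\,\mc{D}\,\s\,\mc{X}$, again invoking $\mc{X}\in\mc{D}\{2\}$. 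Combining the two displays forces $\mc{Z}=\mc{Y}\,\p\,\mc{D}\,\s\,\mc{X}$, finishing the proof.

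The only genuine subtlety, rather than a substantive obstacle, is careful bookkeeping of the mixed products $*_p$ and $*_s$ to ensure that the intermediate tensors have compatible orders at every re-association step, so that the manipulations above are legitimate. Once that is set up, the argument is a short algebraic computation whose structure mirrors the matrix version of the bilateral-inverse characterization.
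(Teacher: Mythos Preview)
Your argument is correct: the existence check uses $\mc{D}\,\s\,\mc{Y}\,\p\,\mc{D}=\mc{D}$ and $\mc{X}\,\p\,\mc{D}\,\s\,\mc{X}=\mc{X}$ exactly where needed, and the uniqueness step---first reducing $\mc{Z}=\mc{Z}\,\p\,\mc{D}\,\s\,\mc{Z}$ to $(\mc{Z}\,\p\,\mc{D})\,\s\,\mc{X}$ via the second equation, then identifying $(\mc{Z}\,\p\,\mc{D})\,\s\,\mc{X}$ via the third---is clean and complete.

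As for comparison, the paper does not supply its own proof of this statement: Theorem~\ref{thm3.2} is quoted directly from \cite[Corollary~26]{PublishedSalemi23} and is used only as a black box (e.g., to derive Theorem~\ref{thm2.66} and Corollary~\ref{cor2.33}). Your direct algebraic verification is the standard route and is presumably what the cited reference does as well; there is no alternative approach in the present paper to contrast it with.
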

\begin{theorem} {\rm \cite[Theorem  25]{{PublishedSalemi23}}}\label{thm3.4}
  Let $\mc{D}\in \mathbb C^{\textbf{M}(p)\times \textbf{N}(s)}$, $\mc{X}\in \mc{D}\{2\}$, and $\mc{Y}\in \mc{D}\{1\}$.
  Then  {$\mc{X}\,\p\,\mc{D}\,\s\,\mc{Y}$} is the unique solution of the following tensor equations:
  \begin{equation}\label{eq2.1}
  \mc{Z}\,\p\,\mc{D}\,\s\,\mc{Z}=\mc{Z},\ \ \mc{Z}\,\p\,\mc{D}=\mc{X}\,\p\,\mc{D}, \ \ \mc{D}\,\n\,\mc{Z} = \mc{D}\,\s\,\mc{X}\,\p\,\mc{D}\,\s\,\mc{Y}.
  \end{equation}
\end{theorem}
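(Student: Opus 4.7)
The plan is to split the argument into an existence and a uniqueness part; both should reduce to short algebraic manipulations that combine associativity of the Einstein product with the defining relations of $\{1\}$- and $\{2\}$-inverses.

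For existence, I verify directly that the candidate $\mc{Z}:=\mc{X}\,\p\,\mc{D}\,\s\,\mc{Y}$ solves each of the three equations in (\ref{eq2.1}). Equation~1, $\mc{Z}\,\p\,\mc{D}\,\s\,\mc{Z}=\mc{Z}$, follows at once from Proposition~\ref{prop2.2}(ii), which says that under the hypotheses $\mc{X}\in \mc{D}\{2\}$ and $\mc{Y}\in \mc{D}\{1\}$ the product $\mc{X}\,\p\,\mc{D}\,\s\,\mc{Y}$ already lies in $\mc{D}\{2\}$. For Equation~2, I re-associate
\begin{equation*}
\mc{Z}\,\p\,\mc{D}=\mc{X}\,\p\,(\mc{D}\,\s\,\mc{Y}\,\p\,\mc{D})=\mc{X}\,\p\,\mc{D},
\end{equation*}
where the last equality uses $\mc{Y}\in \mc{D}\{1\}$. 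Equation~3 is literally the definition of $\mc{Z}$, once the symbol $\n$ is read as the Einstein contraction matching the orders of $\mc{D}$ and $\mc{Z}$ (i.e. in agreement with $\s$).

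For uniqueness, I take an arbitrary solution $\mc{Z}$ of (\ref{eq2.1}) and apply the three identities in sequence. Equation~1 lets me rewrite $\mc{Z}=\mc{Z}\,\p\,\mc{D}\,\s\,\mc{Z}$. Substituting $\mc{Z}\,\p\,\mc{D}=\mc{X}\,\p\,\mc{D}$ from Equation~2 yields $\mc{Z}=\mc{X}\,\p\,\mc{D}\,\s\,\mc{Z}$, and then substituting for $\mc{D}\,\s\,\mc{Z}$ via Equation~3 gives
\begin{equation*}
\mc{Z}=\mc{X}\,\p\,(\mc{D}\,\s\,\mc{X}\,\p\,\mc{D}\,\s\,\mc{Y})=(\mc{X}\,\p\,\mc{D}\,\s\,\mc{X})\,\p\,\mc{D}\,\s\,\mc{Y}=\mc{X}\,\p\,\mc{D}\,\s\,\mc{Y},
\end{equation*}
where the final equality uses $\mc{X}\,\p\,\mc{D}\,\s\,\mc{X}=\mc{X}$, which holds because $\mc{X}\in \mc{D}\{2\}$.

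The point that needs the most care is the bookkeeping of the different Einstein contractions ($\p$, $\s$, and the $\n$ appearing in Equation~3): each intermediate product must be dimensionally consistent, and in particular the symbol $\n$ in the statement has to be identified with the contraction producing tensors of the same order as $\mc{D}\,\s\,\mc{X}\,\p\,\mc{D}\,\s\,\mc{Y}$. Once that identification is settled, the argument rests on nothing deeper than associativity of the Einstein product together with a single use of each of the $\{1\}$- and $\{2\}$-inverse identities, so no new machinery beyond Proposition~\ref{prop2.2} is required.
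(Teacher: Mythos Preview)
Your argument is correct. Note, however, that the paper does not actually prove Theorem~\ref{thm3.4}: it is quoted from \cite{PublishedSalemi23} without proof, so there is no in-paper argument to compare against directly. That said, the paper does prove closely analogous statements (e.g.\ Theorem~\ref{thmm2.3}) by the same mechanism you use for existence, and for uniqueness it takes the slight variant of assuming two solutions $\mc{Z}_1,\mc{Z}_2$ and chaining $\mc{Z}_1=\mc{Z}_1\,\p\,\mc{D}\,\s\,\mc{Z}_1=\mc{Z}_1\,\p\,\mc{D}\,\s\,\mc{Z}_2=\cdots=\mc{Z}_2$. Your version, which pins an arbitrary solution directly to the formula $\mc{X}\,\p\,\mc{D}\,\s\,\mc{Y}$, is equivalent and arguably a little cleaner. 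The only cosmetic point: you invoke Proposition~\ref{prop2.2}(ii) for the first equation, but since you already verify $\mc{Z}\,\p\,\mc{D}=\mc{X}\,\p\,\mc{D}$ and $\mc{D}\,\s\,\mc{Z}=\mc{D}\,\s\,\mc{X}\,\p\,\mc{D}\,\s\,\mc{Y}$, the outer-inverse identity $\mc{Z}\,\p\,\mc{D}\,\s\,\mc{Z}=(\mc{X}\,\p\,\mc{D})\,\s\,\mc{Z}=\mc{X}\,\p\,(\mc{D}\,\s\,\mc{X})\,\p\,\mc{D}\,\s\,\mc{Y}=\mc{X}\,\p\,\mc{D}\,\s\,\mc{Y}=\mc{Z}$ also drops out in one line without the external reference.
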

}
The composite generalized inverse of tensors can be seen as a particular case of the generalized bilateral inverses.
A summarization of main composite outer inverses is arranged in Table \ref{tab:cmpo}.
\begin{table}[htb!]
\caption{Composite generalized inverses for the tensor $\mc{D}\in \mathbb C^{\textbf{N}(s)\times \textbf{N}(s)}$}
    \label{tab:cmpo}
    \centering
    \begin{tabular}{|l|c|c |c|}
   \hline
 Title     & Definitions &  Choices for $\mc{X}$ and $\mc{Y}$ & Restriction\\
     \hline
  DMP inverse \cite{ma-ten}  & $\mc{D}^{\D,\dagger}=\mc{D}^\D\,\s\,\mc{D}\,\s\,\mc{D}^{\dagger}$   & $\mc{X}=\mc{D}^\D$,\ $\mc{Y}=\mc{D}^{\dagger}$ & $\mathrm{ind}(\mc{D}) =k$\\
   \hline
  MPD inverse  \cite{ma-ten} &$\mc{D}^{\dagger,\D}=\mc{D}^{\dagger}\,\s\,\mc{D}\,\s\,\mc{D}^{\D}$   & $\mc{X}=\mc{D}^{\dagger}$,\ $\mc{Y}=\mc{D}^{\D}$ & $\mathrm{ind}(\mc{D}) =k$\\
   \hline
    CMP inverse \cite{ma-ten} & $\mc{D}^{c,\dagger}=\mc{D}^{\dagger}\,\s\,\mc{D}\,\s\,\mc{D}^{\D,\dagger}=\mc{D}^{\dagger,\D}\,\s\,\mc{D}\,\s\,\mc{D}^{\dagger}$  & $\mc{X}=\mc{D}^{\dagger}$, \ $\mc{Y}=\mc{D}^{\D,\dagger}$& $\mathrm{ind}(\mc{D}) =k$ \\\hline
   MPCEP inverse &  $\mc{D}^{\dagger,\ep}=\mc{D}^{\dagger}\,\s\,\mc{D}\,\s\,\mc{D}^{\ep}$   & $\mc{X}=\mc{D}^{\dagger}$,\ $\mc{Y}=\mc{D}^{\ep}$ & - \\
   \hline
    CEPMP inverse &  $\mc{D}^{\ep,\dagger}=\mc{D}^{\ep}\,\s\,\mc{D}\,\s\,\mc{D}^{\dagger}$   & $\mc{X}=\mc{D}^{\ep}$,\ $\mc{Y}=\mc{D}^{\dagger}$ & - \\
   \hline
\end{tabular}
\end{table}

\begin{example}
 Let
$~\mc{D}=(a_{ijkl})
 \in \mathbb{R}^{\overline{2\times3}\times\overline{2\times3}}$ with entries
\begin{eqnarray*}
a_{ij11} =
    \begin{pmatrix}
    1 & -1 & 1 \\
    1 & 1 &  -1
    \end{pmatrix},~
a_{ij12} =
    \begin{pmatrix}
     1 & 0 & -1\\
     1 & 0 & 0
    \end{pmatrix},~
a_{ij13} =
    \begin{pmatrix}
     1 & 0 & -1\\
     1 & -1 & 0
\end{pmatrix},\\
a_{ij21} =
    \begin{pmatrix}
     0 & 0 & 1\\
     -1 & 0 & -1
    \end{pmatrix},~
    a_{ij22} =
    \begin{pmatrix}
    0 & -1 & -1 \\
    1 & -1 &  0
    \end{pmatrix},~
a_{ij23} =
    \begin{pmatrix}
     1 & 1 & -1\\
     1 & 0 & 1
    \end{pmatrix}.
\end{eqnarray*}
The entries of Moore-Penrose inverse of $\mc{D}$ are as follows:
\begin{eqnarray*}
(\mc{D}^{\dagger})_{ij11} =
    \begin{pmatrix}
    1/8 & 1/8 & 5/16 \\
    5/16 & -3/16 &  3/16
    \end{pmatrix},~
(\mc{D}^{\dagger})_{ij12} =
    \begin{pmatrix}
     -1/8 & -1/8 & 3/16\\
     3/16 & -5/16 & 5/16
    \end{pmatrix},\\
(\mc{D}^{\dagger})_{ij13} =
    \begin{pmatrix}
     1/2 & -3/2 & 1/2\\
     0 & 0 & 1/2
\end{pmatrix},~
(\mc{D}^{\dagger})_{ij21} =
    \begin{pmatrix}
     3/8 & -5/8 & 3/16\\
     -5/16 & 3/16 & 5/16
    \end{pmatrix},\\
    (\mc{D}^{\dagger})_{ij22} =
    \begin{pmatrix}
    0 & 1 & -3/4 \\
    -1/4 & -1/4 &  -1/4
    \end{pmatrix},~
(\mc{D}^{\dagger})_{ij23} =
    \begin{pmatrix}
     1/8 & -7/8 & 1/16\\
     -7/16 & 1/16 & 7/16
    \end{pmatrix}.
\end{eqnarray*}
Clearly, $\mathrm{ind}(\mc{D})=3$. The entries of the Drazin inverse of  $\mc{D}$ are computed as below.
\begin{eqnarray*}
(\mc{D}^\D)_{ij11} =
    \begin{pmatrix}
    1/16 & 37/64 & -123/256 \\
    19/256 & 23/256 &  151/256
    \end{pmatrix},~
(\mc{D}^\D)_{ij12} =
    \begin{pmatrix}
     -1/16 & 19/64 & -93/256\\
     5/256 & -31/256 & 97/256
    \end{pmatrix},
\end{eqnarray*}
 \begin{eqnarray*}
(\mc{D}^\D)_{ij13} =
    \begin{pmatrix}
     -1/8 & 5/32 & -39/128\\
     -1/128 & -29/128 & 35/128
\end{pmatrix},
   (\mc{D}^\D)_{ij21} =
    \begin{pmatrix}
     1/16 & 9/64 & -15/256\\
     7/256 & 27/256 & 27/256
    \end{pmatrix},
\end{eqnarray*}
 \begin{eqnarray*}
    (\mc{D}^\D)_{ij22} =
    \begin{pmatrix}
    1/4 & -1/16 & -13/64 \\
    21/64 & -1/64 &  -1/64
    \end{pmatrix},~
(\mc{D}^\D)_{ij23} =
    \begin{pmatrix}
     -7/16 & 5/64 & -11/256\\
     -93/256 & -89/256 & 39/256
    \end{pmatrix}.
\end{eqnarray*}

Similarly, the entries of the core-EP inverse are as follows:
\begin{eqnarray*}
(\mc{D}^{\ep})_{ij11} =
    \begin{pmatrix}
    \frac{163}{1011} & \frac{733}{2816} & \frac{-445}{1726} \\
    \frac{1021}{6291} & \frac{779}{6405} &  \frac{845}{3233}
    \end{pmatrix},~
(\mc{D}^{\ep})_{ij12} =
    \begin{pmatrix}
     -349/2688 & 493/4236 & -2002/9523\\
     -412/9359 & -191/1704 & 92/455
    \end{pmatrix},
    \end{eqnarray*}
 \begin{eqnarray*}
(\mc{D}^{\ep})_{ij13} =
    \begin{pmatrix}
     367/3039 & -120/1921 & -284/2425\\
     458/2543 & -277/9589 & -30/9589
\end{pmatrix},~
   (\mc{D}^{\ep})_{ij21} =
    \begin{pmatrix}
     3/8 & -5/8 & 3/16\\
     -5/16 & 3/16 & 5/16
    \end{pmatrix},
    \end{eqnarray*}
 \begin{eqnarray*}
    (\mc{D}^{\ep})_{ij22} =
    \begin{pmatrix}
    0 & 1 & -3/4 \\
    -1/4 & -1/4 &  -1/4
    \end{pmatrix},~
(\mc{D}^{\ep})_{ij23} =
    \begin{pmatrix}
     1/8 & -7/8 & 1/16\\
     -7/16 & 1/16 & 7/16
    \end{pmatrix}.
\end{eqnarray*}
Next, we compute the generalized bilateral inverse by restricting the inner and outer inverses.
\begin{eqnarray*}
(\mc{D}^{\dagger,\D})_{ij11} =
   \begin{pmatrix}
       1/16     &     37/64     &    -11/64   \\
     -15/64      &    -7/32      &     9/32
    \end{pmatrix},~
(\mc{D}^{\dagger,\D})_{ij21} =
  \begin{pmatrix}
       1/16    &       9/64      &     1/64    \\
      -3/64     &      1/32      &     1/32
    \end{pmatrix},
\end{eqnarray*}
\begin{eqnarray*}
(\mc{D}^{\dagger,\D})_{ij12} =
  \begin{pmatrix}
      -1/16     &     19/64     &    -13/64   \\
      -9/64     &     -9/32      &     7/32
    \end{pmatrix},~
(\mc{D}^{\dagger,\D})_{ij22} =
  \begin{pmatrix}
       1/4      &     -1/16    &      -1/16    \\
       3/16     &     -1/8      &     -1/8
    \end{pmatrix},
\end{eqnarray*}
\begin{eqnarray*}
(\mc{D}^{\dagger,\D})_{ij13} =
  \begin{pmatrix}
      -1/8      &      5/32     &     -7/32  \\
      -3/32      &    -5/16      &     3/16
    \end{pmatrix},~
(\mc{D}^{\dagger,\D})_{ij23}
  \begin{pmatrix}
      -7/16     &      5/64     &    -11/64 \\
     -15/64      &    -7/32     &      9/32
    \end{pmatrix},
\end{eqnarray*}

\begin{eqnarray*}
(\mc{D}^{\D,\dagger})_{ij11}=
  \begin{pmatrix}
       5/32     &     67/128    &   -233/512 \\
      81/512    &     77/512    &    269/512
    \end{pmatrix},~
(\mc{D}^{\D,\dagger})_{ij21} =
  \begin{pmatrix}
      -1/32     &     25/128    &    -43/512  \\
     -29/512    &     23/512    &     87/512
    \end{pmatrix},
\end{eqnarray*}
\begin{eqnarray*}
(\mc{D}^{\D,\dagger})_{ij12} =
  \begin{pmatrix}
      -5/32     &     45/128  &     -199/512  \\
     -33/512    &    -93/512     &   227/512
    \end{pmatrix},~
(\mc{D}^{\D,\dagger})_{ij22} =
  \begin{pmatrix}
       1/4       &    -1/16    &     -13/64  \\
      21/64      &     1/64    &       1/64
    \end{pmatrix},
\end{eqnarray*}
\begin{eqnarray*}
(\mc{D}^{\D,\dagger})_{ij13}=
  \begin{pmatrix}
      -1/8       &     5/32     &    -39/128  \\
      -1/128     &   -29/128    &     35/128
    \end{pmatrix},~
(\mc{D}^{\D,\dagger})_{ij23} =
  \begin{pmatrix}
     -11/32      &     3/128   &      -9/512 \\
    -143/512    &   -147/512    &     45/512
    \end{pmatrix},
\end{eqnarray*}
\begin{eqnarray*}
(\mc{D}^{\dagger,\ep})_{ij11} =
  \begin{pmatrix}
     \frac{163}{1011}   &      \frac{733}{2816}   &    \frac{-232}{4067}  \\
   \frac{-369}{9589}  &      \frac{-172}{2173}  &       \frac{232}{3829}
    \end{pmatrix},~
(\mc{D}^{\dagger,\ep})_{ij21}  =
  \begin{pmatrix}
     411/3445   &    564/2833   &     65/1589  \\
     -55/1006   &    447/6343   &    229/9121
    \end{pmatrix},
\end{eqnarray*}

\begin{eqnarray*}
(\mc{D}^{\dagger,\ep})_{ij12} =
  \begin{pmatrix}
     \frac{ -349}{2688}   &      \frac{493}{4236}    &    \frac{-427}{2589}  \\
      \frac{-225}{2519}   &    \frac{-1452}{6245}   &      \frac{257}{1638}
    \end{pmatrix},~
(\mc{D}^{\dagger,\ep})_{ij22}=
  \begin{pmatrix}
     299/2177   &    589/2330   &   -647/3241  \\
    -118/13841  &   -377/1265    &   581/5434
    \end{pmatrix},
\end{eqnarray*}
\begin{eqnarray*}
(\mc{D}^{\dagger,\ep})_{ij13} =
  \begin{pmatrix}
     \frac{367}{3039}   &    \frac{-120}{1921}  &     \frac{-253}{4979}  \\
      \frac{343}{3014}   &     \frac{-89}{935}   &     \frac{-181}{2607}
    \end{pmatrix},~
(\mc{D}^{\dagger,\ep})_{ij23}=
  \begin{pmatrix}
    -371/2160   &    150/2719  &    -244/3643  \\
    -559/5298    &  -107/1291   &    453/3731
    \end{pmatrix},
\end{eqnarray*}
\begin{eqnarray*}
(A^{\ep, \dagger})_{ij11} =
  \begin{pmatrix}
      \frac{163}{1011}   &     \frac{733}{2816}  &     \frac{-445}{1726}  \\
     \frac{1021}{6291}   &     \frac{779}{6405}   &     \frac{845}{3233}
    \end{pmatrix},~
(A^{\ep, \dagger})_{ij21} =
  \begin{pmatrix}
     411/3445   &    564/2833  &    -289/4009  \\
     301/5161    &   162/883    &   1037/7509
    \end{pmatrix},
\end{eqnarray*}
\begin{eqnarray*}
(A^{\ep, \dagger})_{ij12} =
  \begin{pmatrix}
    \frac{ -349}{2688}   &     \frac{623}{5353}  &    \frac{-2002}{9523}  \\
     \frac{-412}{9359}  &     \frac{-319}{1704}   &      \frac{92}{455}
    \end{pmatrix},~
    (A^{\ep, \dagger})_{ij22} =
  \begin{pmatrix}
     299/2177   &    589/2330  &    -467/1038  \\
     271/1121   &   -191/4000   &    509/1425
    \end{pmatrix},
\end{eqnarray*}
\begin{eqnarray*}
(A^{\ep, \dagger})_{ij13} =
  \begin{pmatrix}
      \frac{367}{3039}   &    \frac{-120}{1921}  &     \frac{-284}{2425}  \\
      \frac{458}{2543}   &    \frac{-277}{9589}   &     \frac{-30}{9589}
    \end{pmatrix},~
(A^{\ep, \dagger})_{ij23} =
  \begin{pmatrix}
    -371/2160   &    150/2719   &   -385/15718 \\
    -358/2419   &   -644/5137   &    263/3332
    \end{pmatrix},
\end{eqnarray*}
\begin{eqnarray*}
(A^{c,\dagger})_{ij11} =
  \begin{pmatrix}
       5/32     &     67/128    &    -17/128   \\
     -21/128    &    -11/64     &     13/64
    \end{pmatrix},~
(A^{c,\dagger})_{ij21} =
  \begin{pmatrix}
      -1/32      &    25/128    &     -3/128  \\
     -15/128     &    -1/64    &       7/64
    \end{pmatrix},
\end{eqnarray*}
\begin{eqnarray*}
(A^{c,\dagger})_{ij12} =
  \begin{pmatrix}
      -5/32     &     45/128    &    -31/128  \\
     -27/128    &    -21/64     &     19/64
    \end{pmatrix},~
(A^{c,\dagger})_{ij22} =
  \begin{pmatrix}
       1/4     &      -1/16     &     -1/16    \\
       3/16     &     -1/8      &     -1/8
    \end{pmatrix},
\end{eqnarray*}
\begin{eqnarray*}
(A^{c,\dagger})_{ij13} =
  \begin{pmatrix}
      -1/8     &       5/32    &      -7/32 \\
      -3/32     &     -5/16     &      3/16
    \end{pmatrix},~
(A^{c,\dagger})_{ij23} =
  \begin{pmatrix}
     -11/32    &       3/128   &     -17/128 \\
     -21/128    &    -11/64    &      13/64
    \end{pmatrix}.
    \end{eqnarray*}
 \end{example}

In the remaining results of this subsection, we discuss the equality of TGBIs and respective duals. The assumption $\mc{D}\in \mathbb C^{\textbf{N}(s)\times \textbf{N}(s)}$ will be default.

 {
\begin{theorem}  [\cite{PublishedSalemi23}, Theorem 28] \label{thm3.5}
  Let   $\mc{X}\in \mc{D}\{2\}$ and $\mc{Y}\in \mc{D}\{1\}$.  The following statements are equivalent:
  \begin{enumerate}
      \item[\rm (i)] $\mc{X}\,\s\,\mc{D}\,\s\,\mc{Y}=\mc{Y}\,\s\,\mc{D}\,\s\,\mc{X}$.
      \item[\rm (ii)] $\mc{X}=\mc{X}\,\s\,\mc{D}\,\s\,\mc{Y}=\mc{Y}\,\s\,\mc{D}\,\s\,\mc{X}$.
      \item[\rm (iii)] $\mathscr{N}(\mc{D}\,\s\,\mc{Y})\subseteq \nl(\mc{X})\wedge \rg(\mc{X})\subseteq\rg(\mc{Y}\,\s\,\mc{D})$.
  \end{enumerate}
\end{theorem}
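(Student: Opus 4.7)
The plan is to establish the cyclic chain of implications $(ii) \Rightarrow (i) \Rightarrow (ii) \Rightarrow (iii) \Rightarrow (ii)$, which together give all three equivalences. The opening implication $(ii) \Rightarrow (i)$ is immediate, since (ii) asserts that both $\mc{X}\,\s\,\mc{D}\,\s\,\mc{Y}$ and $\mc{Y}\,\s\,\mc{D}\,\s\,\mc{X}$ coincide with $\mc{X}$ and hence with each other.

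For $(i) \Rightarrow (ii)$, the strategy is to combine the outer-inverse relation $\mc{X}\,\s\,\mc{D}\,\s\,\mc{X} = \mc{X}$ with the inner-inverse relation $\mc{D}\,\s\,\mc{Y}\,\s\,\mc{D} = \mc{D}$. I would insert $\mc{D} = \mc{D}\,\s\,\mc{Y}\,\s\,\mc{D}$ inside $\mc{X} = \mc{X}\,\s\,\mc{D}\,\s\,\mc{X}$ and regroup as $\mc{X} = (\mc{X}\,\s\,\mc{D}\,\s\,\mc{Y})\,\s\,\mc{D}\,\s\,\mc{X}$. Swapping the parenthesized factor via (i) and collapsing through $\mc{X}\,\s\,\mc{D}\,\s\,\mc{X} = \mc{X}$ yields $\mc{X} = \mc{Y}\,\s\,\mc{D}\,\s\,\mc{X}$, and the second equality in (ii) then follows by invoking (i) once more.

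The forward direction $(ii) \Rightarrow (iii)$ is formal: the factorization $\mc{X} = \mc{Y}\,\s\,\mc{D}\,\s\,\mc{X}$ writes every $\mc{X}\,\s\,\mc{E}$ as $\mc{Y}\,\s\,\mc{D}\,\s\,(\mc{X}\,\s\,\mc{E}) \in \rg(\mc{Y}\,\s\,\mc{D})$, and $\mc{X} = \mc{X}\,\s\,\mc{D}\,\s\,\mc{Y}$ sends every $\mc{F}\in\nl(\mc{D}\,\s\,\mc{Y})$ to $\mc{X}\,\s\,\mc{F} = \mc{X}\,\s\,(\mc{D}\,\s\,\mc{Y}\,\s\,\mc{F}) = \mc{O}$.

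The substantive direction is $(iii) \Rightarrow (ii)$. From $\rg(\mc{X}) \subseteq \rg(\mc{Y}\,\s\,\mc{D})$, Lemma \ref{range-stan} produces a tensor $\mc{U}$ with $\mc{X} = \mc{Y}\,\s\,\mc{D}\,\s\,\mc{U}$; left multiplication by $\mc{Y}\,\s\,\mc{D}$ together with the collapse $\mc{D}\,\s\,\mc{Y}\,\s\,\mc{D} = \mc{D}$ then gives $\mc{Y}\,\s\,\mc{D}\,\s\,\mc{X} = \mc{X}$. To obtain $\mc{X}\,\s\,\mc{D}\,\s\,\mc{Y} = \mc{X}$, I would observe that $\mc{D}\,\s\,\mc{Y}$ is idempotent (again by the inner property), so that for every test tensor $\mc{G}$ the difference $\mc{G} - \mc{D}\,\s\,\mc{Y}\,\s\,\mc{G}$ lies in $\nl(\mc{D}\,\s\,\mc{Y})$; the null-space hypothesis then forces $\mc{X}\,\s\,\mc{G} = \mc{X}\,\s\,\mc{D}\,\s\,\mc{Y}\,\s\,\mc{G}$ for every $\mc{G}$, whence $\mc{X} = \mc{X}\,\s\,\mc{D}\,\s\,\mc{Y}$. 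I expect this last passage, converting a pointwise null-space containment into an operator-level identity, to be the most delicate step; the idempotence of $\mc{D}\,\s\,\mc{Y}$ and the choice of test tensor $\mc{G} - \mc{D}\,\s\,\mc{Y}\,\s\,\mc{G}$ are precisely what make the argument go through.
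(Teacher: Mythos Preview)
Your argument is correct. The chain $(ii)\Rightarrow(i)\Rightarrow(ii)\Rightarrow(iii)\Rightarrow(ii)$ is handled cleanly: the insertion $\mc{D}=\mc{D}\,\s\,\mc{Y}\,\s\,\mc{D}$ inside $\mc{X}=\mc{X}\,\s\,\mc{D}\,\s\,\mc{X}$ for $(i)\Rightarrow(ii)$ is exactly the right move, and in $(iii)\Rightarrow(ii)$ the use of Lemma~\ref{range-stan} for the range inclusion together with the idempotence of $\mc{D}\,\s\,\mc{Y}$ for the null-space inclusion is standard and sound. One minor remark on the last step: since $\nl(\cdot)$ in this paper is defined via test tensors $\mc{G}\in\mathbb{C}^{\textbf{N}(s)}$, your conclusion ``$\mc{X}\,\s\,\mc{G}=\mc{X}\,\s\,\mc{D}\,\s\,\mc{Y}\,\s\,\mc{G}$ for all such $\mc{G}$ forces $\mc{X}=\mc{X}\,\s\,\mc{D}\,\s\,\mc{Y}$'' is using the (true but tacit) fact that a tensor vanishing on every order-$s$ test tensor is zero; this is routine, but worth a word if you write it up.

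As for comparison with the paper: there is nothing to compare against. Theorem~\ref{thm3.5} is quoted from \cite{PublishedSalemi23} (their Theorem~28) and the present paper supplies no proof of its own; it is only invoked later as a tool. Your write-up therefore stands on its own as a self-contained verification.
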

}
{\color{red}
\iffalse
In special cases $\mc{X}=\mc{D}^\D \mbox{ or }\mc{X}=\mc{D}^{\ep}$ and $\mc{Y}=\mc{D}^{\dagger}$), the following results are obtained.
%
\begin{corollary}
  The following assertions are mutually equivalent: % for $\mc{D}\in \mathbb C^{\textbf{N}(s)\times \textbf{N}(s)}$:
    \begin{enumerate}
      \item[\rm (i)] $\mc{D}^\D\,\s\,\mc{D}\,\s\,\mc{D}^{\dagger}=\mc{D}^{\dagger}\,\s\,\mc{D}\,\s\,\mc{D}^\D$.
      \item[\rm (ii)] $\mc{D}^\D=\mc{D}^\D\,\s\,\mc{D}\,\s\,\mc{D}^{\dagger}=\mc{D}^{\dagger}\,\s\,\mc{D}\,\s\,\mc{D}^\D$.
    \end{enumerate}
\end{corollary}
\fi
}

\begin{corollary}
   The following assertions are mutually equivalent: % for $\mc{D}\in \mathbb C^{\textbf{N}(s)\times \textbf{N}(s)}$:
    \begin{enumerate}
      \item[\rm (i)] $\mc{D}^{\ep}\,\s\,\mc{D}\,\s\,\mc{D}^{\dagger}=\mc{D}^{\dagger}\,\s\,\mc{D}\,\s\,\mc{D}^{\ep}$.
      \item[\rm (ii)] $\mc{D}^{\ep}=\mc{D}^{\ep}\,\s\,\mc{D}\,\s\,\mc{D}^{\dagger}=\mc{D}^{\dagger}\,\s\,\mc{D}\,\s\,\mc{D}^{\ep}$.
    \end{enumerate}
\end{corollary}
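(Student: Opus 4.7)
The plan is to recognize that this corollary is an immediate specialization of Theorem \ref{thm3.5} to the particular choice $\mc{X}=\mc{D}^{\ep}$ and $\mc{Y}=\mc{D}^{\dagger}$. Therefore the entire argument reduces to verifying that these two tensors satisfy the hypotheses of Theorem \ref{thm3.5}, namely that $\mc{D}^{\ep}\in\mc{D}\{2\}$ and $\mc{D}^{\dagger}\in\mc{D}\{1\}$.

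First I would check that $\mc{D}^{\dagger}\in\mc{D}\{1\}$. This is immediate from the defining equations of the Moore--Penrose inverse in Table \ref{Table1T}, which include equation (1), i.e., $\mc{D}\,\s\,\mc{D}^{\dagger}\,\s\,\mc{D}=\mc{D}$. Next I would check that $\mc{D}^{\ep}\in\mc{D}\{2\}$. This is where Lemma \ref{lm4.2} is used: part (ii) of that lemma asserts precisely that the core--EP inverse satisfies $\mc{D}^{\ep}\,\s\,\mc{D}\,\s\,\mc{D}^{\ep}=\mc{D}^{\ep}$, so $\mc{D}^{\ep}$ is a $\{2\}$-inverse of $\mc{D}$.

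Having established these two memberships, I would invoke Theorem \ref{thm3.5} with $\mc{X}=\mc{D}^{\ep}$ and $\mc{Y}=\mc{D}^{\dagger}$. The equivalence (i)$\Leftrightarrow$(ii) in that theorem then reads
\[
\mc{D}^{\ep}\,\s\,\mc{D}\,\s\,\mc{D}^{\dagger}=\mc{D}^{\dagger}\,\s\,\mc{D}\,\s\,\mc{D}^{\ep}
\Longleftrightarrow
\mc{D}^{\ep}=\mc{D}^{\ep}\,\s\,\mc{D}\,\s\,\mc{D}^{\dagger}=\mc{D}^{\dagger}\,\s\,\mc{D}\,\s\,\mc{D}^{\ep},
\]
which is exactly the statement of the corollary.

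Since the work is just an invocation of an already-proved general theorem, there is no serious obstacle; the only substantive step is justifying the $\{2\}$-inverse property of $\mc{D}^{\ep}$, and that is a direct citation of Lemma \ref{lm4.2}(ii). I would write the proof in at most three or four lines: verify the two hypotheses, apply Theorem \ref{thm3.5}, and conclude.
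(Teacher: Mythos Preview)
Your proposal is correct and matches the paper's approach exactly: the corollary is stated without proof in the paper, as an immediate specialization of Theorem \ref{thm3.5} with $\mc{X}=\mc{D}^{\ep}$ and $\mc{Y}=\mc{D}^{\dagger}$, and your verification of the hypotheses via Table \ref{Table1T} and Lemma \ref{lm4.2}(ii) is precisely what is needed.
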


\begin{proposition}
The following equivalence holds for %$\mc{D}\in \mathbb C^{\textbf{N}(s)\times \textbf{N}(s)}$ and
$\mc{X},\mc{Z}\in \mc{D}\{1\}$.
  \begin{equation*}
 \mc{X}\,\s\,\mc{D}\,\s\,\mc{Z}= \mc{Z}\,\s\,\mc{D}\,\s\,\mc{X} \Longleftrightarrow \mc{X}\,\s\,\mc{D}=\mc{Z}\,\s\,\mc{D} \wedge  \mc{D}\,\s\,\mc{Z}=\mc{D}\,\s\,\mc{X}.
  \end{equation*}
\end{proposition}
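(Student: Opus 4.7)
The plan is to establish each implication separately using only the definition of a $\{1\}$-inverse, namely $\mc{D}\,\s\,\mc{X}\,\s\,\mc{D}=\mc{D}$ and $\mc{D}\,\s\,\mc{Z}\,\s\,\mc{D}=\mc{D}$ (which for the square case $\mc{D}\in\mathbb{C}^{\textbf{N}(s)\times \textbf{N}(s)}$ is the appropriate specialization of the inner-inverse equation from Table \ref{Table1T}). No deeper machinery from the previous lemmas is needed; the argument is an associativity-plus-cancellation manipulation.

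For the forward implication, I would assume $\mc{X}\,\s\,\mc{D}\,\s\,\mc{Z}=\mc{Z}\,\s\,\mc{D}\,\s\,\mc{X}$ and then take two different multiplications. First, left-multiply both sides by $\mc{D}$, regroup by associativity, and collapse $\mc{D}\,\s\,\mc{X}\,\s\,\mc{D}$ on the left and $\mc{D}\,\s\,\mc{Z}\,\s\,\mc{D}$ on the right to $\mc{D}$; this yields $\mc{D}\,\s\,\mc{Z}=\mc{D}\,\s\,\mc{X}$. Second, right-multiply the same hypothesis by $\mc{D}$, regroup, and collapse $\mc{D}\,\s\,\mc{Z}\,\s\,\mc{D}$ and $\mc{D}\,\s\,\mc{X}\,\s\,\mc{D}$ to $\mc{D}$; this yields $\mc{X}\,\s\,\mc{D}=\mc{Z}\,\s\,\mc{D}$.

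For the backward implication, I would assume the two equalities $\mc{X}\,\s\,\mc{D}=\mc{Z}\,\s\,\mc{D}$ and $\mc{D}\,\s\,\mc{Z}=\mc{D}\,\s\,\mc{X}$. Right-multiplying the first by $\mc{Z}$ gives $\mc{X}\,\s\,\mc{D}\,\s\,\mc{Z}=\mc{Z}\,\s\,\mc{D}\,\s\,\mc{Z}$, and substituting $\mc{D}\,\s\,\mc{Z}=\mc{D}\,\s\,\mc{X}$ on the right-hand side produces the desired $\mc{X}\,\s\,\mc{D}\,\s\,\mc{Z}=\mc{Z}\,\s\,\mc{D}\,\s\,\mc{X}$.

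There is no real obstacle here; the only point that deserves a line of care is bookkeeping of which Einstein product symbol appears where. Because both $\mc{X}$ and $\mc{Z}$ are $\{1\}$-inverses of the same square tensor $\mc{D}\in\mathbb{C}^{\textbf{N}(s)\times \textbf{N}(s)}$, every contraction in the proof occurs along a block of $s$ indices, so the single product $\,\s\,$ suffices throughout and associativity may be applied freely. I would state this observation once at the start so the subsequent manipulations read as pure algebra on an associative product.
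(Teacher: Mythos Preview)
Your proposal is correct and follows essentially the same approach as the paper: both directions are handled by elementary manipulations using only $\mc{D}\,\s\,\mc{X}\,\s\,\mc{D}=\mc{D}=\mc{D}\,\s\,\mc{Z}\,\s\,\mc{D}$ and associativity of the Einstein product. The paper's forward direction inserts $\mc{D}=\mc{D}\,\s\,\mc{Z}\,\s\,\mc{D}$ (respectively $\mc{D}=\mc{D}\,\s\,\mc{X}\,\s\,\mc{D}$) on the appropriate side rather than multiplying the hypothesis by $\mc{D}$, but this is a cosmetic rephrasing of the same idea; the converse argument is identical to yours.
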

\begin{proof}
 Let $\mc{X}\,\s\,\mc{D}\,\s\,\mc{Z}= \mc{Z}\,\s\,\mc{D}\,\s\,\mc{X} $.
 Then
 \begin{equation*}     \mc{X}\,\s\,\mc{D}=\mc{X}\,\s\,\mc{D}\,\s\, \mc{Z}\,\s\,\mc{D}=\mc{Z}\,\s\,\mc{D}\,\s\,\mc{X}\,\s\,\mc{D}=\mc{Z}\,\s\,\mc{D},
 \end{equation*}
 and
 \begin{equation*}
  \mc{D}\,\s\,\mc{Z}=\mc{D}\,\s\,\mc{X}\,\s\,\mc{D}\,\s\,\mc{Z}= \mc{D}\,\s\,\mc{Z}\,\s\,\mc{D}\,\s\,\mc{X}=\mc{D}\,\s\,\mc{X}.
 \end{equation*}
 Conversely, let $\mc{X}\,\s\,\mc{D}=\mc{Z}\,\s\,\mc{D} \mbox{ and }   \mc{D}\,\s\,\mc{Z}=\mc{D}\,\s\,\mc{X}$.
 Then
 {$$\mc{X}\,\s\,\mc{D}\,\s\,\mc{Z}= \mc{Z}\,\s\,\mc{D}\,\s\,\mc{Z} =   \mc{Z}\,\s\,\mc{D}\,\s\,\mc{X},$$}
which confirms the statement.
\end{proof}
\begin{theorem}
 The subsequent equivalencies hold for $\mathrm{ind}(\mc{D})=k$:
  \begin{enumerate}
      \item[\rm (i)] $\mc{D}^{c,\dagger}=\mc{D}^{\dagger,\D}\Longleftrightarrow \nl(\mc{D}^{\dagger})\subseteq \nl(\mc{D}^{k})$.
      \item[\rm (ii)]  $\mc{D}^{c,\dagger}=\mc{D}^{\D,\dagger}\Longleftrightarrow \rg(\mc{D}^k)\subseteq \rg(\mc{D}^{\dagger})$.
  \end{enumerate}
\end{theorem}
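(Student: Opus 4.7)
The plan is to replace each equality $\mc{D}^{c,\dagger}=\mc{D}^{\dagger,\D}$ and $\mc{D}^{c,\dagger}=\mc{D}^{\D,\dagger}$ by a single algebraic identity that matches the null-space or range hypothesis, respectively. From the two alternative expressions for $\mc{D}^{c,\dagger}$ recorded in Table~\ref{tab:cmpo} one has $\mc{D}^{c,\dagger}=\mc{D}^{\dagger,\D}\,\s\,\mc{D}\,\s\,\mc{D}^{\dagger}=\mc{D}^{\dagger}\,\s\,\mc{D}\,\s\,\mc{D}^{\D,\dagger}$, so (i) reduces to $\mc{D}^{\dagger,\D}\,\s\,\mc{D}\,\s\,\mc{D}^{\dagger}=\mc{D}^{\dagger,\D}$ and (ii) reduces to $\mc{D}^{\dagger}\,\s\,\mc{D}\,\s\,\mc{D}^{\D,\dagger}=\mc{D}^{\D,\dagger}$. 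As a preparatory step I would record the elementary equivalences
$$\nl(\mc{D}^{\dagger})\subseteq\nl(\mc{D}^{k})\Longleftrightarrow \mc{D}^{k+1}\,\s\,\mc{D}^{\dagger}=\mc{D}^{k},$$
$$\rg(\mc{D}^{k})\subseteq\rg(\mc{D}^{\dagger})\Longleftrightarrow \mc{D}^{\dagger}\,\s\,\mc{D}^{k+1}=\mc{D}^{k},$$
obtained using $\mc{D}\,\s\,\mc{D}^{\dagger}\,\s\,\mc{D}=\mc{D}$, $\mc{D}^{\dagger}\,\s\,\mc{D}\,\s\,\mc{D}^{\dagger}=\mc{D}^{\dagger}$, and the fact that $\mc{D}\,\s\,\mc{D}^{\dagger}$ (resp.\ $\mc{D}^{\dagger}\,\s\,\mc{D}$) is the idempotent with null space $\nl(\mc{D}^{\dagger})$ (resp.\ range $\rg(\mc{D}^{\dagger})$).

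For the forward direction of (i) I would premultiply the reduced equality by $\mc{D}^{k+1}$; using $\mc{D}\,\s\,\mc{D}^{\dagger}\,\s\,\mc{D}=\mc{D}$ together with the Drazin property $\mc{D}^{k+1}\,\s\,\mc{D}^{\D}=\mc{D}^{k}$ from Table~\ref{Table1T}, both sides collapse to $\mc{D}^{k+1}\,\s\,\mc{D}^{\dagger}=\mc{D}^{k}$, which is exactly the first equivalence above. For the converse I would first derive the auxiliary Drazin identity
$$\mc{D}^{\D}=(\mc{D}^{\D})^{k+1}\,\s\,\mc{D}^{k},$$
obtained from the commutation $\mc{D}\,\s\,\mc{D}^{\D}=\mc{D}^{\D}\,\s\,\mc{D}$ and the idempotence of $\mc{D}\,\s\,\mc{D}^{\D}$, and then use the hypothesis $\mc{D}^{k+1}\,\s\,\mc{D}^{\dagger}=\mc{D}^{k}$ to compute $\mc{D}^{\D}\,\s\,\mc{D}\,\s\,\mc{D}^{\dagger}=(\mc{D}^{\D})^{k+1}\,\s\,\mc{D}^{k+1}\,\s\,\mc{D}^{\dagger}=(\mc{D}^{\D})^{k+1}\,\s\,\mc{D}^{k}=\mc{D}^{\D}$. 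Premultiplication by $\mc{D}^{\dagger}\,\s\,\mc{D}$ then yields $\mc{D}^{\dagger,\D}\,\s\,\mc{D}\,\s\,\mc{D}^{\dagger}=\mc{D}^{\dagger,\D}$.

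Part (ii) is handled by a strictly dual argument: post-multiply the reduced equality by $\mc{D}^{k+1}$, use $\mc{D}\,\s\,\mc{D}^{\dagger}\,\s\,\mc{D}^{k+1}=\mc{D}^{k+1}$ and $\mc{D}^{\D}\,\s\,\mc{D}^{k+1}=\mc{D}^{k}$ to collapse the forward direction to $\mc{D}^{\dagger}\,\s\,\mc{D}^{k+1}=\mc{D}^{k}$, and invoke the companion identity $\mc{D}^{\D}=\mc{D}^{k}\,\s\,(\mc{D}^{\D})^{k+1}$ in the reverse direction to derive $\mc{D}^{\dagger}\,\s\,\mc{D}\,\s\,\mc{D}^{\D}=\mc{D}^{\D}$, from which post-multiplication by $\mc{D}\,\s\,\mc{D}^{\dagger}$ gives the claim.

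The main obstacle is the interaction between the orthogonal projectors $\mc{D}\,\s\,\mc{D}^{\dagger}$, $\mc{D}^{\dagger}\,\s\,\mc{D}$ and the oblique spectral projector $\mc{D}\,\s\,\mc{D}^{\D}=\mc{D}^{\D}\,\s\,\mc{D}$: these do not commute in general, whereas the hypothesis controls only the two-factor product $\mc{D}^{k+1}\,\s\,\mc{D}^{\dagger}$ (or $\mc{D}^{\dagger}\,\s\,\mc{D}^{k+1}$), and the quantity one must simplify is the five-factor product $\mc{D}^{\dagger}\,\s\,\mc{D}\,\s\,\mc{D}^{\D}\,\s\,\mc{D}\,\s\,\mc{D}^{\dagger}$. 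The identities $\mc{D}^{\D}=(\mc{D}^{\D})^{k+1}\,\s\,\mc{D}^{k}=\mc{D}^{k}\,\s\,(\mc{D}^{\D})^{k+1}$ are exactly the technical bridge needed: they extract a $\mc{D}^{k}$ factor on the correct side of $\mc{D}^{\D}$ so that the hypothesis can be inserted exactly once, after which the remaining Drazin structure automatically reassembles into $\mc{D}^{\D}$ and produces the desired equality.
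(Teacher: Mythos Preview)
Your argument is correct. The forward directions are close in spirit to the paper's: the paper premultiplies $\mc{D}^{\dagger,\D}=\mc{D}^{c,\dagger}$ by $\mc{D}$ and then by $\mc{D}^{k}$ to reach $\mc{D}^{k}=\mc{D}^{k+1}\,\s\,\mc{D}^{\dagger}$, which is exactly the algebraic reformulation of the null-space inclusion you isolate at the outset; your single premultiplication by $\mc{D}^{k+1}$ is a slightly cleaner packaging of the same idea. The genuine difference is in the converse. The paper does not compute directly: it sets $\mc{X}=\mc{D}^{\dagger,\D}\in\mc{D}\{2\}$, $\mc{Y}=\mc{D}^{\dagger}\in\mc{D}\{1\}$ and invokes the self-duality criterion of Theorem~\ref{thm3.5}, so that it only needs to verify $\rg(\mc{X})\subseteq\rg(\mc{Y}\,\s\,\mc{D})$ (trivial) and $\nl(\mc{D}\,\s\,\mc{Y})\subseteq\nl(\mc{X})$ (from the hypothesis via $\mc{D}\,\s\,\mc{D}^{\D}=(\mc{D}^{\D})^{k}\,\s\,\mc{D}^{k}$). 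Your route bypasses Theorem~\ref{thm3.5} entirely and proves the stronger intermediate identity $\mc{D}^{\D}\,\s\,\mc{D}\,\s\,\mc{D}^{\dagger}=\mc{D}^{\D}$ (resp.\ $\mc{D}^{\dagger}\,\s\,\mc{D}\,\s\,\mc{D}^{\D}=\mc{D}^{\D}$) via the factorization $\mc{D}^{\D}=(\mc{D}^{\D})^{k+1}\,\s\,\mc{D}^{k}=\mc{D}^{k}\,\s\,(\mc{D}^{\D})^{k+1}$. This makes your proof self-contained and independent of the bilateral-inverse framework, at the modest cost of not exhibiting the result as an instance of the general TGBI duality; the paper's version, conversely, illustrates that Theorem~\ref{thm3.5} is doing real work.
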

\begin{proof}
    (i) Let $\mc{D}^{\dagger}\,\s\,\mc{D}\,\s\,\mc{D}^\D=\mc{D}^{\dagger}\,\s\,\mc{D}\,\s\,\mc{D}^\D\,\s\,\mc{D}\,\s\,\mc{D}^{\dagger}$.
    Pre-multiplying the last equality by $\mc{D}$, we get $\mc{D}\,\s\,\mc{D}^\D=\mc{D}\,\s\,\mc{D}^\D\,\s\,\mc{D}\,\s\,\mc{D}^{\dagger}$.
     If  $\mc{Q}\in \nl(\mc{D}^{\dagger})$ then $\mc{D}^{\dagger}\,\s\,\mc{Q}=0$.
Now $\nl(\mc{D}^{\dagger})\subseteq \nl(\mc{D}^{k})$ is valid due to
    \begin{eqnarray*}
        \mc{D}^k=\mc{D}^k\,\s\,\mc{D}\,\s\,\mc{D}^\D=\mc{D}^k\,\s\,\mc{D}\,\s\,\mc{D}^\D\,\s\,\mc{D}\,\s\,\mc{D}^{\dagger}.
    \end{eqnarray*}
Conversely, let $\mc{X}=D^{\dagger,\D}$ and $\mc{Y}=\mc{D}^{\dagger}$.
According to the Theorem \ref{thm3.5}, it is sufficient to show $\nl(\mc{D}\,\s\,\mc{Y})\subseteq \nl(\mc{X})$ and $\rg(\mc{X})\subseteq\rg(\mc{Y}\,\s\,\mc{D})$.
Clearly $\rg(\mc{X})\subseteq\rg(\mc{Y}\,\s\,\mc{D})$ since 
$\mc{X}=\mc{D}^{\dagger,\D}={\color{blue}\mc{D}^\dagger \,\s\,\mc{D}\,\s\,\mc{D}^\D=}\mc{Y}\,\s\,\mc{D}\,\s\,\mc{D}^\D$.
Now if $\mc{Q}\in \nl(\mc{D}\,\s\,\mc{Y})=\nl(\mc{D}\,\s\,\mc{D}^{\dagger})\subseteq \nl(\mc{D}^{\dagger})\subseteq \nl(\mc{D}^{k})$ then $\mc{D}^k\,\s\,\mc{Q}=0$.
Further,
\begin{equation*}
    \mc{X}\,\s\,\mc{Q}=\mc{D}^{\dagger}\,\s\,\mc{D}\,\s\,\mc{D}^\D\,\s\,\mc{Q}=\mc{D}^{\dagger}\,\s\,(\mc{D}^\D)^k\,\s\,\mc{D}^k\,\s\,\mc{Q}=0.
\end{equation*}
Thus $\nl(\mc{D}\,\s\,\mc{Y})\subseteq \nl(\mc{X})$.\\
(ii) Similar to the part (i).
\end{proof}

The above theorem can be restated as the following remark.
\begin{remark}
  %Let $\mc{D}\in \mathbb C^{\textbf{N}(s)\times \textbf{N}(s)}$ with
  For $\mathrm{ind}(\mc{D})=k$ it follows
  \begin{enumerate}
      \item[\rm (i)] $\mc{D}^{\dagger,\D}\,\s\,\mc{D}\,\s\,\mc{D}^{\dagger}=\mc{D}^{\dagger}\,\s\,\mc{D}\,\s\,\mc{D}^{\dagger,\D}\Longleftrightarrow \nl(\mc{D}^{\dagger})\subseteq \nl(\mc{D}^{k})$.
      \item[\rm (ii)]  $\mc{D}^{\dagger}\,\s\,\mc{D}\,\s\,\mc{D}^{\D,\dagger}=\mc{D}^{\D,\dagger}\,\s\,\mc{D}\,\s\,\mc{D}^{\dagger}\Longleftrightarrow \rg(\mc{D}^k)\subseteq \rg(\mc{D}^{\dagger})$.
  \end{enumerate}
\end{remark}
\begin{theorem}
 For %$\mc{D}\in \mathbb C^{\textbf{N}(s)\times \textbf{N}(s)}$ with
  $\mathrm{ind}(\mc{D})=k$ it follows
  \begin{equation*}
      \mc{D}^{\ep,\dagger}=\mc{D}^{\dagger,\ep} \Longleftrightarrow \nl(\mc{D}^{\dagger})\subseteq \nl(\mc{D}^{\ep}) \wedge\rg(\mc{D}^{\ep})\subseteq \rg(\mc{D}^{\dagger}\,\s\,\mc{D}^k).
  \end{equation*}
\end{theorem}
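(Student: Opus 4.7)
The plan is to apply Theorem \ref{thm3.5} with the choice $\mc{X} = \mc{D}^{\ep}$ and $\mc{Y} = \mc{D}^{\dagger}$. First I would verify the hypotheses of that theorem: $\mc{D}^{\ep} \in \mc{D}\{2\}$ by Lemma \ref{lm4.2}\,(ii), and $\mc{D}^{\dagger} \in \mc{D}\{1\}$ by the Moore-Penrose defining equations. With this substitution, the identity $\mc{X}\,\s\,\mc{D}\,\s\,\mc{Y} = \mc{Y}\,\s\,\mc{D}\,\s\,\mc{X}$ of Theorem \ref{thm3.5}\,(i) coincides exactly with $\mc{D}^{\ep,\dagger} = \mc{D}^{\dagger,\ep}$, so the equivalence (i)$\Leftrightarrow$(iii) reduces the desired equality to the pair of inclusions
\begin{equation*}
\nl(\mc{D}\,\s\,\mc{D}^{\dagger}) \subseteq \nl(\mc{D}^{\ep}) \quad \text{and} \quad \rg(\mc{D}^{\ep}) \subseteq \rg(\mc{D}^{\dagger}\,\s\,\mc{D}).
\end{equation*}

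Next I would reconcile these with the conditions in the statement. For the null-space part, I would prove $\nl(\mc{D}\,\s\,\mc{D}^{\dagger}) = \nl(\mc{D}^{\dagger})$. One inclusion is obtained by left-multiplying $\mc{D}^{\dagger}\,\s\,\mc{Q} = \mc{O}$ by $\mc{D}$; the reverse inclusion follows from the Moore-Penrose identity $\mc{D}^{\dagger} = \mc{D}^{\dagger}\,\s\,\mc{D}\,\s\,\mc{D}^{\dagger}$, which yields $\mc{D}^{\dagger}\,\s\,\mc{Q} = \mc{D}^{\dagger}\,\s\,(\mc{D}\,\s\,\mc{D}^{\dagger}\,\s\,\mc{Q}) = \mc{O}$ whenever $\mc{D}\,\s\,\mc{D}^{\dagger}\,\s\,\mc{Q} = \mc{O}$. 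This turns the first condition into $\nl(\mc{D}^{\dagger}) \subseteq \nl(\mc{D}^{\ep})$, as required.

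The main obstacle is the range equivalence $\rg(\mc{D}^{\ep}) \subseteq \rg(\mc{D}^{\dagger}\,\s\,\mc{D}) \Longleftrightarrow \rg(\mc{D}^{\ep}) \subseteq \rg(\mc{D}^{\dagger}\,\s\,\mc{D}^k)$. The easy direction follows from $\rg(\mc{D}^{\dagger}\,\s\,\mc{D}^k) \subseteq \rg(\mc{D}^{\dagger}) = \rg(\mc{D}^{\dagger}\,\s\,\mc{D})$. For the converse, assume $\rg(\mc{D}^{\ep}) \subseteq \rg(\mc{D}^{\dagger}\,\s\,\mc{D}) = \rg(\mc{D}^{\dagger})$. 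Since $\mc{D}^{\dagger}\,\s\,\mc{D}$ is the Hermitian projector onto $\rg(\mc{D}^{\dagger})$, it fixes every element of $\rg(\mc{D}^{\ep})$, so $\mc{D}^{\dagger}\,\s\,\mc{D}\,\s\,\mc{D}^{\ep} = \mc{D}^{\ep}$. Rewriting $\mc{D}\,\s\,\mc{D}^{\ep} = \mc{D}^k\,\s\,(\mc{D}^{\ep})^k$ via Lemma \ref{lm4.2}\,(i) produces $\mc{D}^{\ep} = \mc{D}^{\dagger}\,\s\,\mc{D}^k\,\s\,(\mc{D}^{\ep})^k$; hence any $\mc{D}^{\ep}\,\s\,\mc{W} \in \rg(\mc{D}^{\ep})$ equals $\mc{D}^{\dagger}\,\s\,\mc{D}^k\,\s\,((\mc{D}^{\ep})^k\,\s\,\mc{W}) \in \rg(\mc{D}^{\dagger}\,\s\,\mc{D}^k)$, finishing the equivalence. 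The crux is exactly this "absorption" step, which couples the projector identity with the power relation furnished by Lemma \ref{lm4.2}\,(i).
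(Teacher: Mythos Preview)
Your proof is correct and uses the same essential ingredients as the paper (Theorem \ref{thm3.5} and Lemma \ref{lm4.2}(i)), but the organization differs in a way worth noting. The paper handles the two implications asymmetrically: for the forward direction it derives $\mc{D}^{\ep}=\mc{D}^{\dagger}\,\s\,\mc{D}^k\,\s\,(\mc{D}^{\ep})^k$ and $\mc{D}^{\ep}=\mc{D}^{\ep}\,\s\,\mc{D}\,\s\,\mc{D}^{\dagger}$ directly from the hypothesis by algebraic manipulation, and only invokes Theorem \ref{thm3.5} for the converse. You instead invoke the full equivalence (i)$\Leftrightarrow$(iii) of Theorem \ref{thm3.5} once, reducing the whole theorem to the identification $\nl(\mc{D}\,\s\,\mc{D}^{\dagger})=\nl(\mc{D}^{\dagger})$ together with the range equivalence $\rg(\mc{D}^{\ep})\subseteq\rg(\mc{D}^{\dagger}\,\s\,\mc{D})\Leftrightarrow\rg(\mc{D}^{\ep})\subseteq\rg(\mc{D}^{\dagger}\,\s\,\mc{D}^k)$. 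Your treatment is slightly more systematic and avoids repeating the outer-inverse manipulations, at the cost of needing to prove the extra range equivalence; the paper's direct computation for the forward direction makes the origin of the factor $\mc{D}^k$ more visible. The ``absorption'' step you single out, namely $\mc{D}^{\dagger}\,\s\,\mc{D}\,\s\,\mc{D}^{\ep}=\mc{D}^{\ep}\Rightarrow\mc{D}^{\ep}=\mc{D}^{\dagger}\,\s\,\mc{D}^k\,\s\,(\mc{D}^{\ep})^k$ via Lemma \ref{lm4.2}(i), is precisely the same identity the paper obtains in its forward-direction chain.
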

\begin{proof}
    Let $\mc{D}^{\ep,\dagger}=\mc{D}^{\ep}\,\s\,\mc{D}\,\s\,\mc{D}^{\dagger}=\mc{D}^{\dagger}\,\s\,\mc{D}\,\s\,\mc{D}^{\ep}=\mc{D}^{\dagger,\ep}$.
    From the identities
    \begin{equation*}
    \aligned
        \mc{D}^{\ep}&=\mc{D}^{\ep}\,\s\,\mc{D}\,\s\,\mc{D}^{\dagger}\,\s\,\mc{D}\,\s\, \mc{D}^{\ep}= \mc{D}^{\dagger}\,\s\,\mc{D}\,\s\, \mc{D}^{\ep}=\mc{D}^{\dagger}\,\s\,\mc{D}^k\,\s\, (\mc{D}^{\ep})^k,\\
        \mc{D}^{\ep}&=\mc{D}^{\ep}\,\s\,\mc{D}\,\s\,\mc{D}^{\dagger}\,\s\,\mc{D}\,\s\, \mc{D}^{\ep}=\mc{D}^{\ep}\,\s\,\mc{D}\,\s\, \mc{D}^{\dagger},
        \endaligned
    \end{equation*}
    we obtain $\nl(\mc{D}^{\dagger})\subseteq \nl(\mc{D}^{\ep}) \mbox{ and }\rg(\mc{D}^{\ep})\subseteq \rg(\mc{D}^{\dagger}\,\s\,\mc{D}^k)$.\\
    For the converse statement, in view of Theorem \ref{thm3.5}, it is sufficient to show $\nl(\mc{D}\,\s\,\mc{D}^{\dagger})\subseteq \nl(\mc{D}^{\ep})$ and $\rg(\mc{D}^{\ep})\subseteq\rg(\mc{D}^{\dagger}\,\s\,\mc{D})$. Clearly,  $\rg(\mc{D}^{\ep})\subseteq \rg(\mc{D}^{\dagger}\,\s\,\mc{D}^k)= {\rg(\mc{D}^{\dagger}\,\s\,\mc{D}\,\s\,\mc{D}^k\,\s\,\mc{D}^\D)\subseteq\rg(\mc{D}^{\dagger}\,\s\,\mc{D})}$.
The given null condition $\nl(\mc{D}^{\dagger})\subseteq \nl(\mc{D}^{\ep})$ implies $\mc{D}^{\ep}=\mc{Q}\,\s\,\mc{D}^{\dagger}$ for some $\mc{Q}\in \mathbb C^{\textbf{N}(s)\times \textbf{N}(s)}$.
Finally
    \begin{equation*}
       \mc{D}^{\ep} =\mc{Q}\,\s\,\mc{D}^{\dagger}=\mc{Q}\,\s\,\mc{D}^{\dagger}\,\s\,\mc{D}\,\s\,\mc{D}^{\dagger}=\mc{D}^{\ep}\,\s\,\mc{D}\,\s\,\mc{D}^{\dagger}
    \end{equation*}
confirms $\nl(\mc{D}\,\s\,\mc{D}^{\dagger})\subseteq \nl(\mc{D}^{\ep})$.
\end{proof}

\subsection{Properties of CMP, DMP, MPD, MPCEP, and CEPMP inverses}\label{SubSecCGI}
A few characterizations of these composite inverses are discussed in the subsequent theorems.   {
\begin{theorem} \label{thm2.66}
   Let   $\mathrm{ind}(\mc{D})=k$. Then  $\mc{D}^{c,\dagger}$ is the unique solution to equations
   \begin{equation}\label{eqqq2.3}
       \mc{Z}\,\s\,\mc{D}\,\s\,\mc{Z}=\mc{Z},~  \mc{D}\,\s\,\mc{Z}=\mc{D}\,\s\,\mc{D}^\D\,\s\,\mc{D}\,\s\,\mc{D}^{\dagger},~\mc{Z}\,\s\,\mc{D}=\mc{D}^{\dagger}\,\s\,\mc{D}\,\s\,\mc{D}^\D\,\s\,\mc{D}.
   \end{equation}
\end{theorem}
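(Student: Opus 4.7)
The plan is to prove the statement in two stages: first show that $\mc{Z}=\mc{D}^{c,\dagger}$ satisfies the three equations in \eqref{eqqq2.3}, then establish uniqueness by exploiting the last two equations together with the idempotency condition.

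For existence, I start from the representation $\mc{D}^{c,\dagger}=\mc{D}^{\dagger}\,\s\,\mc{D}\,\s\,\mc{D}^{\D}\,\s\,\mc{D}\,\s\,\mc{D}^{\dagger}$ recorded in Table~\ref{tab:cmpo}. The key algebraic fact I will repeatedly use is the M-P identity $\mc{D}\,\s\,\mc{D}^{\dagger}\,\s\,\mc{D}=\mc{D}$. Premultiplying the representation by $\mc{D}$ collapses the leading $\mc{D}\,\s\,\mc{D}^{\dagger}\,\s\,\mc{D}$ block to $\mc{D}$, giving $\mc{D}\,\s\,\mc{D}^{c,\dagger}=\mc{D}\,\s\,\mc{D}^{\D}\,\s\,\mc{D}\,\s\,\mc{D}^{\dagger}$. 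Symmetrically, postmultiplying by $\mc{D}$ collapses the trailing block and yields $\mc{D}^{c,\dagger}\,\s\,\mc{D}=\mc{D}^{\dagger}\,\s\,\mc{D}\,\s\,\mc{D}^{\D}\,\s\,\mc{D}$. For the first equation $\mc{Z}\,\s\,\mc{D}\,\s\,\mc{Z}=\mc{Z}$, I use the expression for $\mc{D}^{c,\dagger}\,\s\,\mc{D}$ and multiply on the right by $\mc{D}^{c,\dagger}$; then the central $\mc{D}\,\s\,\mc{D}^{\dagger}\,\s\,\mc{D}$ appearing in the product again collapses to $\mc{D}$, and the Drazin identity $\mc{D}^{\D}\,\s\,\mc{D}\,\s\,\mc{D}^{\D}=\mc{D}^{\D}$ reduces the remaining expression back to $\mc{D}^{c,\dagger}$.

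For uniqueness, suppose $\mc{Z}_1$ and $\mc{Z}_2$ both solve \eqref{eqqq2.3}. The second and third equations immediately force $\mc{D}\,\s\,\mc{Z}_1=\mc{D}\,\s\,\mc{Z}_2$ and $\mc{Z}_1\,\s\,\mc{D}=\mc{Z}_2\,\s\,\mc{D}$. Combining these with the idempotency $\mc{Z}_i=\mc{Z}_i\,\s\,\mc{D}\,\s\,\mc{Z}_i$ yields the chain
\begin{equation*}
\mc{Z}_1=\mc{Z}_1\,\s\,\mc{D}\,\s\,\mc{Z}_1=\mc{Z}_2\,\s\,\mc{D}\,\s\,\mc{Z}_1=\mc{Z}_2\,\s\,\mc{D}\,\s\,\mc{Z}_2=\mc{Z}_2,
\end{equation*}
which closes the argument.

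I do not anticipate a serious obstacle here: both halves are essentially symbol manipulation using the M-P identity $\mc{D}\,\s\,\mc{D}^{\dagger}\,\s\,\mc{D}=\mc{D}$ and the Drazin identities $\mc{D}^{\D}\,\s\,\mc{D}\,\s\,\mc{D}^{\D}=\mc{D}^{\D}$ and $\mc{D}\,\s\,\mc{D}^{\D}=\mc{D}^{\D}\,\s\,\mc{D}$. The mildest care is needed in the verification of $\mc{Z}\,\s\,\mc{D}\,\s\,\mc{Z}=\mc{Z}$, where one must identify and collapse the right central subword; using the factorization $\mc{D}^{c,\dagger}=\mc{D}^{\dagger,\D}\,\s\,\mc{D}\,\s\,\mc{D}^{\dagger}$ on one side and $\mc{D}^{\dagger}\,\s\,\mc{D}\,\s\,\mc{D}^{\D,\dagger}$ on the other keeps the bookkeeping clean.
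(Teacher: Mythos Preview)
Your proof is correct but takes a different route from the paper. The paper obtains Theorem~\ref{thm2.66} as an immediate specialization of Theorem~\ref{thm3.2} (quoted from \cite{PublishedSalemi23}): since $\mc{D}^{\D,\dagger}\in\mc{D}\{2\}$ and $\mc{D}^{\dagger}\in\mc{D}\{1\}$, setting $\mc{X}=\mc{D}^{\D,\dagger}$ and $\mc{Y}=\mc{D}^{\dagger}$ in that general result yields exactly the system \eqref{eqqq2.3} with unique solution $\mc{Y}\,\s\,\mc{D}\,\s\,\mc{X}=\mc{D}^{\dagger}\,\s\,\mc{D}\,\s\,\mc{D}^{\D,\dagger}=\mc{D}^{c,\dagger}$. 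Your argument, by contrast, is fully self-contained: you verify the three equations directly from the representation $\mc{D}^{c,\dagger}=\mc{D}^{\dagger}\,\s\,\mc{D}\,\s\,\mc{D}^{\D}\,\s\,\mc{D}\,\s\,\mc{D}^{\dagger}$ using only the M-P and Drazin identities, and your uniqueness chain is the standard outer-inverse argument. What you gain is independence from the external reference; what the paper's approach buys is economy and a clear signal that the CMP characterization is just one instance of the general TGBI framework of Theorem~\ref{thm3.2}.
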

\begin{proof}
The proof is follows from Theorem \ref{thm3.2} by specifying $\mc{X}=\mc{D}^{D,\dagger}$ and $\mc{Y}=\mc{D}^{\dagger}$.
\end{proof}
\begin{theorem} \label{thm2.77}
The DMP inverse $\mc{D}^{\D,\dagger}$ is the unique solution to tensor equations
   \begin{equation}\label{eqqq2.4}
       \mc{Z}\,\s\,\mc{D}\,\s\,\mc{Z}=\mc{Z},~  \mc{D}\,\s\,\mc{Z}=\mc{D}\,\s\,\mc{D}^\D\,\s\,\mc{D}\,\s\,\mc{D}^{\dagger},~\mc{Z}\,\s\,\mc{D}=\mc{D}^\D\,\s\,\mc{D}.
   \end{equation}
\end{theorem}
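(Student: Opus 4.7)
The plan is to obtain the theorem as a direct corollary of Theorem \ref{thm3.4}, the ``left-biased'' bilateral characterization, by making the specialization $\mc{X} = \mc{D}^{\D}$ and $\mc{Y} = \mc{D}^{\dagger}$. The first step is to confirm the two hypotheses of that theorem: $\mc{X} \in \mc{D}\{2\}$ is immediate from the Drazin-defining equation $(2)$ of Table \ref{Table1T}, namely $\mc{D}^{\D}\,\s\,\mc{D}\,\s\,\mc{D}^{\D} = \mc{D}^{\D}$, and $\mc{Y} \in \mc{D}\{1\}$ follows from the Moore--Penrose property $\mc{D}\,\s\,\mc{D}^{\dagger}\,\s\,\mc{D} = \mc{D}$. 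With these choices, $\mc{X}\,\s\,\mc{D}\,\s\,\mc{Y}$ equals $\mc{D}^{\D,\dagger}$ by definition (Table \ref{tab:cmpo}).

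Next, I would substitute $\mc{X}$ and $\mc{Y}$ into the three equations supplied by Theorem \ref{thm3.4} and verify that they collapse verbatim to \eqref{eqqq2.4}. Since every tensor involved lives in $\mathbb{C}^{\textbf{N}(s)\times \textbf{N}(s)}$, the three Einstein operators $\p$, $\s$, $\n$ coincide, so no operator mismatch arises. The first equation is identical; the second reads $\mc{Z}\,\s\,\mc{D} = \mc{X}\,\s\,\mc{D} = \mc{D}^{\D}\,\s\,\mc{D}$, matching the third equation of \eqref{eqqq2.4}; and the third becomes $\mc{D}\,\s\,\mc{Z} = \mc{D}\,\s\,\mc{D}^{\D}\,\s\,\mc{D}\,\s\,\mc{D}^{\dagger}$, matching the second equation of \eqref{eqqq2.4}. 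Uniqueness is then inherited directly from Theorem \ref{thm3.4}.

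The only point requiring care is the selection of Theorem \ref{thm3.4} rather than Theorem \ref{thm3.2}, which was used for the CMP case in Theorem \ref{thm2.66}. The CMP factorization $\mc{D}^{c,\dagger} = \mc{D}^{\dagger}\,\s\,\mc{D}\,\s\,\mc{D}^{\D,\dagger}$ places an inner inverse on the left and an outer inverse on the right, matching the $\mc{Y}\,\p\,\mc{D}\,\s\,\mc{X}$ template of Theorem \ref{thm3.2}. In contrast, the DMP factorization $\mc{D}^{\D,\dagger} = \mc{D}^{\D}\,\s\,\mc{D}\,\s\,\mc{D}^{\dagger}$ has the outer factor on the left and the inner factor on the right, so the mirror statement Theorem \ref{thm3.4} is the natural tool. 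A naive attempt to invoke Theorem \ref{thm3.2} with $\mc{Y} = \mc{D}^{\D}$ would fail, since the Drazin inverse is generally not an inner inverse.

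If a self-contained argument is preferred as a fallback, the direct route is short. Existence is checked by confirming each of the three equations for $\mc{Z} = \mc{D}^{\D,\dagger}$, using only $\mc{D}\,\s\,\mc{D}^{\dagger}\,\s\,\mc{D} = \mc{D}$ to collapse the middle factor in $\mc{D}^{\D,\dagger}\,\s\,\mc{D}$ down to $\mc{D}^{\D}\,\s\,\mc{D}$, and the outer-inverse property of $\mc{D}^{\D,\dagger}$ (from \cite{ma-ten}, Proposition 2.1(b)) for the first equation. Uniqueness follows from the observation that any two solutions $\mc{Z}_1, \mc{Z}_2$ satisfy $\mc{Z}_1\,\s\,\mc{D} = \mc{Z}_2\,\s\,\mc{D}$ and $\mc{D}\,\s\,\mc{Z}_1 = \mc{D}\,\s\,\mc{Z}_2$, whence $\mc{Z}_1 = \mc{Z}_1\,\s\,\mc{D}\,\s\,\mc{Z}_1 = \mc{Z}_2\,\s\,\mc{D}\,\s\,\mc{Z}_1 = \mc{Z}_2\,\s\,\mc{D}\,\s\,\mc{Z}_2 = \mc{Z}_2$.
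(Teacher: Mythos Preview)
Your proposal is correct and follows exactly the paper's own approach: the paper's proof consists of the single sentence that the result follows from Theorem~\ref{thm3.4} by specifying $\mc{X}=\mc{D}^{\D}$ and $\mc{Y}=\mc{D}^{\dagger}$. Your write-up is in fact more detailed than the paper's, since you explicitly verify the hypotheses, justify the choice of Theorem~\ref{thm3.4} over Theorem~\ref{thm3.2}, and supply a self-contained fallback argument.
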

\begin{proof}
The proof is follows from Theorem \ref{thm3.4} by specifying $\mc{X}=\mc{D}^{D}$ and $\mc{Y}=\mc{D}^{\dagger}$.
\end{proof}
\begin{corollary} \label{cor2.33}
The MPD inverse  $\mc{D}^{\dagger,\D}$ is the unique solution to the tensor equations
   \begin{equation*}
       \mc{Z}\,\s\,\mc{D}\,\s\,\mc{Z}=\mc{Z},~  \mc{Z}\,\s\,\mc{D}=\mc{D}^{\dagger}\,\s\,\mc{D}\,\s\,\mc{D}^\D\,\s\,\mc{D},~\mc{D}\,\s\,\mc{Z}=\mc{D}\,\s\,\mc{D}^\D.
   \end{equation*}
\end{corollary}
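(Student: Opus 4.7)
The plan is to mimic the short proofs of Theorems \ref{thm2.66} and \ref{thm2.77}, each of which reduces to a single invocation of one of the Salemi uniqueness results (Theorem \ref{thm3.2} or Theorem \ref{thm3.4}) via an appropriate choice of the inner/outer ingredients $\mc{X},\mc{Y}$. For this corollary the relevant statement is Theorem \ref{thm3.2}, which certifies that $\mc{Y}\,\p\,\mc{D}\,\s\,\mc{X}$ is the unique solution of a three-equation system whenever $\mc{X}\in\mc{D}\{2\}$ and $\mc{Y}\in\mc{D}\{1\}$. Because $\mc{D}^{\dagger,\D}=\mc{D}^{\dagger}\,\s\,\mc{D}\,\s\,\mc{D}^{\D}$, the natural assignment is $\mc{Y}=\mc{D}^{\dagger}$ and $\mc{X}=\mc{D}^{\D}$; notice that the roles of the two ingredients are swapped compared to Theorem \ref{thm2.77}, where Theorem \ref{thm3.4} was applied with $\mc{X}=\mc{D}^{\D}$ and $\mc{Y}=\mc{D}^{\dagger}$.

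The prerequisite checks are immediate from Table \ref{Table1T}. Equation $(2)$ of the Drazin system reads $\mc{D}^{\D}\,\s\,\mc{D}\,\s\,\mc{D}^{\D}=\mc{D}^{\D}$, so $\mc{D}^{\D}\in\mc{D}\{2\}$; equation $(1)$ of the Moore--Penrose system reads $\mc{D}\,\s\,\mc{D}^{\dagger}\,\s\,\mc{D}=\mc{D}$, so $\mc{D}^{\dagger}\in\mc{D}\{1\}$. Both hold for every $\mc{D}\in\mathbb{C}^{\textbf{N}(s)\times\textbf{N}(s)}$ with $\ind{\mc{D}}=k$; since $\mc{D}$ is square, the Einstein products $\p$ and $\s$ coincide on this space, so the mixed-notation system of Theorem \ref{thm3.2} translates verbatim into a purely $\s$-indexed system.

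Substituting into Theorem \ref{thm3.2}, the tensor $\mc{Y}\,\s\,\mc{D}\,\s\,\mc{X}=\mc{D}^{\dagger}\,\s\,\mc{D}\,\s\,\mc{D}^{\D}=\mc{D}^{\dagger,\D}$ is the unique $\mc{Z}$ fulfilling
\begin{equation*}
\mc{Z}\,\s\,\mc{D}\,\s\,\mc{Z}=\mc{Z},\qquad \mc{D}\,\s\,\mc{Z}=\mc{D}\,\s\,\mc{X}=\mc{D}\,\s\,\mc{D}^{\D},\qquad \mc{Z}\,\s\,\mc{D}=\mc{Y}\,\s\,\mc{D}\,\s\,\mc{X}\,\s\,\mc{D}=\mc{D}^{\dagger}\,\s\,\mc{D}\,\s\,\mc{D}^{\D}\,\s\,\mc{D},
\end{equation*}
which is precisely the system stated in the corollary. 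Hence no genuine obstacle arises: the result is a direct specialization of Theorem \ref{thm3.2}, the only bookkeeping being to keep track of which slot of $\mc{Y}\,\s\,\mc{D}\,\s\,\mc{X}$ receives $\mc{D}^{\dagger}$ versus $\mc{D}^{\D}$, and to observe that the first and third equations of the target system identify with the second and third equations of the Salemi system under this assignment.
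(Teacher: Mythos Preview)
Your proof is correct and coincides with the paper's own argument: the paper also derives Corollary~\ref{cor2.33} directly from Theorem~\ref{thm3.2} by specifying $\mc{X}=\mc{D}^{\D}$ and $\mc{Y}=\mc{D}^{\dagger}$.
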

\begin{proof}
 The proof follows from Theorem \ref{thm3.2} by specifying $\mc{X}=\mc{D}^{D}$ and $\mc{Y}=\mc{D}^{\dagger}$.
\end{proof}
An alternative proof for the Theorem \ref{thm2.66}, Theorem \ref{thm2.77} and Corollary \ref{cor2.33} can be found in \cite{PublishedSalemi23}.
}
\begin{theorem}
Let  $\mathrm{ind}(\mc{D})=k$. Then
     \begin{enumerate}
         \item[\rm (i)] $\mc{D}^{\D,\dagger}\,\s\,\mc{D}=\mc{D}^{c,\dagger}\,\s\,\mc{D}\Longleftrightarrow\rg(\mc{D}^k)\subseteq\rg(\mc{D}^{\dagger})$.
         \item[\rm (ii)] $\mc{D}\,\s\,\mc{D}^{\dagger,\D}=\mc{D}\,\s\,\mc{D}^{c,\dagger}\Longleftrightarrow\mc{D}^k=\mc{D}^{k+1}\,\s\,\mc{D}^{\dagger}$.
     \end{enumerate}
\end{theorem}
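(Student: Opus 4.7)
The plan is to collapse the long compositions on both sides of each equation using the M-P identity $\mc{D}\,\s\,\mc{D}^{\dagger}\,\s\,\mc{D}=\mc{D}$, and then to convert the resulting simpler equality into a range inclusion (for part (i)) or a null-space inclusion (for part (ii)) involving one of the two orthogonal projectors $\mc{D}^{\dagger}\,\s\,\mc{D}$ (with range $\rg(\mc{D}^{\dagger})$) and $\mc{D}\,\s\,\mc{D}^{\dagger}$ (with range $\rg(\mc{D})$, hence null space $\nl(\mc{D}^{*})$). Two standard principles will be used repeatedly: for an idempotent $\mc{P}$, the identity $\mc{P}\,\s\,\mc{X}=\mc{X}$ is equivalent to $\rg(\mc{X})\subseteq\rg(\mc{P})$, while $\mc{X}\,\s\,\mc{P}=\mc{X}$ is equivalent to $\nl(\mc{P})\subseteq\nl(\mc{X})$. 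I will also invoke the Drazin facts $\rg(\mc{D}^{\D}\,\s\,\mc{D})=\rg(\mc{D}^{k})$ and $\nl(\mc{D}\,\s\,\mc{D}^{\D})=\nl(\mc{D}^{k})$, which follow directly from the axioms $(1^{k})$, $(2)$ and $(5)$ in Table \ref{Table1T} and which I would record as a short preliminary claim.

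For part (i) I expose the pattern $\mc{D}\,\s\,\mc{D}^{\dagger}\,\s\,\mc{D}=\mc{D}$ at the right end of each composition. Using the definitions in Table \ref{tab:cmpo}, $\mc{D}^{\D,\dagger}\,\s\,\mc{D}=\mc{D}^{\D}\,\s\,(\mc{D}\,\s\,\mc{D}^{\dagger}\,\s\,\mc{D})=\mc{D}^{\D}\,\s\,\mc{D}$ and $\mc{D}^{c,\dagger}\,\s\,\mc{D}=\mc{D}^{\dagger}\,\s\,\mc{D}\,\s\,\mc{D}^{\D}\,\s\,(\mc{D}\,\s\,\mc{D}^{\dagger}\,\s\,\mc{D})=\mc{D}^{\dagger}\,\s\,\mc{D}\,\s\,\mc{D}^{\D}\,\s\,\mc{D}$, so (i) reduces to $\mc{D}^{\D}\,\s\,\mc{D}=(\mc{D}^{\dagger}\,\s\,\mc{D})\,\s\,(\mc{D}^{\D}\,\s\,\mc{D})$. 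By the left-projector principle applied to the idempotent $\mc{D}^{\dagger}\,\s\,\mc{D}$, this is equivalent to $\rg(\mc{D}^{\D}\,\s\,\mc{D})\subseteq\rg(\mc{D}^{\dagger}\,\s\,\mc{D})=\rg(\mc{D}^{\dagger})$, which by the preliminary claim is precisely $\rg(\mc{D}^{k})\subseteq\rg(\mc{D}^{\dagger})$.

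Part (ii) is dual: the cancellation $\mc{D}\,\s\,\mc{D}^{\dagger}\,\s\,\mc{D}=\mc{D}$ on the \emph{left} gives $\mc{D}\,\s\,\mc{D}^{\dagger,\D}=(\mc{D}\,\s\,\mc{D}^{\dagger}\,\s\,\mc{D})\,\s\,\mc{D}^{\D}=\mc{D}\,\s\,\mc{D}^{\D}$ and $\mc{D}\,\s\,\mc{D}^{c,\dagger}=(\mc{D}\,\s\,\mc{D}^{\dagger}\,\s\,\mc{D})\,\s\,\mc{D}^{\D}\,\s\,\mc{D}\,\s\,\mc{D}^{\dagger}=\mc{D}\,\s\,\mc{D}^{\D}\,\s\,\mc{D}\,\s\,\mc{D}^{\dagger}$, so (ii) reduces to $\mc{D}\,\s\,\mc{D}^{\D}=(\mc{D}\,\s\,\mc{D}^{\D})\,\s\,(\mc{D}\,\s\,\mc{D}^{\dagger})$. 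By the right-projector principle applied to $\mc{D}\,\s\,\mc{D}^{\dagger}$, this is equivalent to $\nl(\mc{D}\,\s\,\mc{D}^{\dagger})=\nl(\mc{D}^{*})\subseteq\nl(\mc{D}\,\s\,\mc{D}^{\D})=\nl(\mc{D}^{k})$. On the other hand, the claimed right-hand side $\mc{D}^{k}=\mc{D}^{k+1}\,\s\,\mc{D}^{\dagger}=\mc{D}^{k}\,\s\,(\mc{D}\,\s\,\mc{D}^{\dagger})$ is, by the same principle, equivalent to $\nl(\mc{D}^{*})\subseteq\nl(\mc{D}^{k})$. The two conditions therefore coincide.

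The principal subtlety is spotting the telescoping step: once $\mc{D}\,\s\,\mc{D}^{\dagger}\,\s\,\mc{D}=\mc{D}$ is placed at the correct end of the CMP composition, the five-factor products collapse to two- or four-factor ones built out of the projectors $\mc{D}^{\D}\,\s\,\mc{D}$, $\mc{D}^{\dagger}\,\s\,\mc{D}$ and $\mc{D}\,\s\,\mc{D}^{\dagger}$, after which everything reduces to routine projector algebra together with the Drazin identities on range and null space.
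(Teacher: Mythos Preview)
Your proof is correct and follows essentially the same route as the paper: both arguments first collapse the compositions via $\mc{D}\,\s\,\mc{D}^{\dagger}\,\s\,\mc{D}=\mc{D}$ to reduce (i) to $\mc{D}^{\D}\,\s\,\mc{D}=(\mc{D}^{\dagger}\,\s\,\mc{D})\,\s\,(\mc{D}^{\D}\,\s\,\mc{D})$ (and dually for (ii)). The only difference is packaging: the paper treats the two implications separately, post-multiplying by $\mc{D}^{k}$ for the forward direction and invoking Lemma~\ref{range-stan} with an explicit factorisation $\mc{D}^{k}=\mc{D}^{\dagger}\,\s\,\mc{T}$ for the converse, whereas you subsume both directions into the single projector equivalence $\mc{P}\,\s\,\mc{X}=\mc{X}\Longleftrightarrow\rg(\mc{X})\subseteq\rg(\mc{P})$ together with $\rg(\mc{D}^{\D}\,\s\,\mc{D})=\rg(\mc{D}^{k})$, which is a slightly cleaner but equivalent formulation.
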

\begin{proof}
 (i) Consider  $\mc{D}^{\D,\dagger}\,\s\,\mc{D}=\mc{D}^{c,\dagger}\,\s\,\mc{D}$.
 Then $\mc{D}^\D\,\s\,\mc{D}=\mc{D}^{\dagger}\,\s\,\mc{D}\,\s\,\mc{D}^\D\,\s\,\mc{D}$.
 In addition, $\mc{D}^k=\mc{D}^\D\,\s\,\mc{D}\,\s\,\mc{D}^k=\mc{D}^{\dagger}\,\s\,\mc{D}\,\s\,\mc{D}^\D\,\s\,\mc{D}\,\s\,\mc{D}^k$.
 Thus $\rg(\mc{D}^k)\subseteq\rg(\mc{D}^{\dagger})$.\\
 Conversely, let $\rg(\mc{D}^k)\subseteq\rg(\mc{D}^{\dagger})$.
 Then there exist a tensor $\mc{T}\in \mathbb C^{\textbf{N}(s)\times \textbf{N}(s)}$ such that $\mc{D}^k=\mc{D}^{\dagger}\,\s\,\mc{T}$.
 Now
  {
\begin{equation*}
\aligned
\mc{D}^{\D,\dagger}\,\s\,\mc{D}&=\mc{D}^{\D}\,\s\,\mc{D}=\mc{D}^{k}\,\s\,(\mc{D}^\D)^k=\mc{D}^{\dagger}\,\s\,\mc{T}\,\s\,(\mc{D}^\D)^k\\
& =\mc{D}^{\dagger}\,\s\,\mc{D}\,\s\,\mc{D}^{\dagger}\,\s\,\mc{T}\,\s\,(\mc{D}^\D)^k=\mc{D}^{\dagger}\,\s\,\mc{D}\,\s\,\mc{D}^{k}\,\s\,(\mc{D}^\D)^k\\
&=\mc{D}^{\dagger}\,\s\,\mc{D}\,\s\,\mc{D}\,\s\,\mc{D}^{\D}=\mc{D}^{\dagger}\,\s\,\mc{D}\,\s\,\mc{D}^D\,\s\,\mc{D}\,\s\,\mc{D}^{\dagger}\,\s\,\mc{D}\\
&=\mc{D}^{c,\dagger}\,\s\,\mc{D}.
\endaligned
\end{equation*}}
(ii) It will follow by using the similar arguments of the part (i).
\end{proof}

The characterizations of MPCEP and CEPMP inverse for tensors are discussed in the next theorems.

\begin{theorem}\label{thmm2.3}
 % Let $\mc{D}\in \mathbb C^{\textbf{N}(s)\times \textbf{N}(s)}$. % with $ind(\mc{D})=k$.
The {\rm MPCEP} inverse $\mc{D}^{\dagger,\ep}$ is the unique solution to equations
   \begin{equation}\label{eqqq2.5}
       \mc{Z}\,\s\,\mc{D}\,\s\,\mc{Z}=\mc{Z},~  \mc{D}\,\s\,\mc{Z}=\mc{D}\,\s\,\mc{D}^{\ep},~\mc{Z}\,\s\,\mc{D}=\mc{D}^{\dagger}\,\s\,\mc{D}\,\s\,\mc{D}^{\ep}\,\s\,\mc{D}.
   \end{equation}
\end{theorem}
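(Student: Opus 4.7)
The plan is to derive Theorem \ref{thmm2.3} as a direct corollary of Theorem \ref{thm3.2}, by specializing the auxiliary tensors to $\mc{X}=\mc{D}^{\ep}$ and $\mc{Y}=\mc{D}^{\dagger}$. This parallels the pattern used for the DMP, MPD and CMP characterizations (Theorems \ref{thm2.66}, \ref{thm2.77} and Corollary \ref{cor2.33}), which are obtained by analogous substitutions.

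First I would verify that the hypotheses of Theorem \ref{thm3.2} are met. Since $\mathrm{ind}(\mc{D})=k$ is assumed, the core-EP inverse $\mc{D}^{\ep}$ exists; Lemma \ref{lm4.2}(ii) gives $\mc{D}^{\ep}\,\s\,\mc{D}\,\s\,\mc{D}^{\ep}=\mc{D}^{\ep}$, so $\mc{D}^{\ep}\in \mc{D}\{2\}$. Similarly, the first Moore--Penrose equation in Table \ref{Table1T} yields $\mc{D}\,\s\,\mc{D}^{\dagger}\,\s\,\mc{D}=\mc{D}$, so $\mc{D}^{\dagger}\in \mc{D}\{1\}$. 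Both hypotheses of Theorem \ref{thm3.2} therefore hold.

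With these choices substituted, Theorem \ref{thm3.2} asserts that $\mc{Y}\,\s\,\mc{D}\,\s\,\mc{X}=\mc{D}^{\dagger}\,\s\,\mc{D}\,\s\,\mc{D}^{\ep}$ is the unique tensor $\mc{Z}$ that simultaneously satisfies
\begin{equation*}
\mc{Z}\,\s\,\mc{D}\,\s\,\mc{Z}=\mc{Z},\qquad \mc{D}\,\s\,\mc{Z}=\mc{D}\,\s\,\mc{D}^{\ep},\qquad \mc{Z}\,\s\,\mc{D}=\mc{D}^{\dagger}\,\s\,\mc{D}\,\s\,\mc{D}^{\ep}\,\s\,\mc{D}.
\end{equation*}
By Definition \ref{Def1MPCEPT}(i), $\mc{D}^{\dagger}\,\s\,\mc{D}\,\s\,\mc{D}^{\ep}=\mc{D}^{\dagger,\ep}$, and the three equations above are exactly system \eqref{eqqq2.5}. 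The conclusion follows.

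There is no genuine obstacle here, as everything reduces to recognizing the pattern; the only checkpoint is ensuring both membership conditions $\mc{D}^{\ep}\in \mc{D}\{2\}$ and $\mc{D}^{\dagger}\in \mc{D}\{1\}$ are invoked correctly (in particular, $\mc{D}^{\ep}$ is used in its outer-inverse capacity rather than as a $\{1\}$-inverse, which it is generally not). I would also note in passing that the companion statement obtained from Theorem \ref{thm3.4} with the same choice of $\mc{X},\mc{Y}$ yields a different characterization of $\mc{D}^{\ep}\,\s\,\mc{D}\,\s\,\mc{D}^{\dagger}=\mc{D}^{\ep,\dagger}$, so Theorem \ref{thm3.2} is precisely the right vehicle for the MPCEP inverse.
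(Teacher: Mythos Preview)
Your proposal is correct. The substitution $\mc{X}=\mc{D}^{\ep}\in\mc{D}\{2\}$ (by Lemma \ref{lm4.2}(ii)) and $\mc{Y}=\mc{D}^{\dagger}\in\mc{D}\{1\}$ in Theorem \ref{thm3.2} yields exactly the system \eqref{eqqq2.5} and identifies $\mc{Y}\,\s\,\mc{D}\,\s\,\mc{X}=\mc{D}^{\dagger,\ep}$ as its unique solution.

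The paper, however, does \emph{not} proceed this way for Theorem \ref{thmm2.3}: it gives a direct, self-contained verification. First it checks that $\mc{Z}=\mc{D}^{\dagger,\ep}$ satisfies each of the three equations in \eqref{eqqq2.5}, and then it proves uniqueness by taking two solutions $\mc{Z}_1,\mc{Z}_2$ and computing $\mc{Z}_1=\mc{Z}_1\,\s\,\mc{D}\,\s\,\mc{Z}_1=\mc{Z}_1\,\s\,\mc{D}\,\s\,\mc{D}^{\ep}=\mc{Z}_1\,\s\,\mc{D}\,\s\,\mc{Z}_2=\mc{D}^{\dagger}\,\s\,\mc{D}\,\s\,\mc{D}^{\ep}\,\s\,\mc{D}\,\s\,\mc{Z}_2=\mc{Z}_2\,\s\,\mc{D}\,\s\,\mc{Z}_2=\mc{Z}_2$. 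Your route is shorter and more in keeping with how the paper itself handles the CMP, DMP and MPD cases (Theorems \ref{thm2.66}, \ref{thm2.77}, Corollary \ref{cor2.33}); the paper's direct argument has the minor advantage of not relying on the cited external result Theorem \ref{thm3.2}, but is otherwise a reproof of that specialization.
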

\begin{proof}
    Let $\mc{Z}=\mc{D}^{\dagger,\ep}=\mc{D}^{\dagger}\,\s\,\mc{D}\,\s\,\mc{D}^{\ep}$. Then $\mc{D}\,\s\,\mc{Z}=\mc{D}\,\s\,\mc{D}^{\ep}$,  $\mc{D}\,\s\,\mc{Z}=\mc{D}\,\s\,\mc{D}^{\ep}$, and
    \begin{equation*}   \mc{Z}\,\s\,\mc{D}\,\s\,\mc{Z}=\mc{Z}\,\s\,\mc{D}\,\s\,\mc{D}^{\ep}=\mc{D}^{\dagger}\,\s\,\mc{D}\,\s\,\mc{D}^{\ep}\,\s\,\mc{D}\,\s\,\mc{D}^{\ep}=\mc{D}^{\dagger}\,\s\,\mc{D}\,\s\,\mc{D}^{\ep}=\mc{Z}
    \end{equation*}
If there are two solutions say $\mc{Z}_1$ and $\mc{Z}_2$, which satisfy \eqref{eqqq2.5}, then
\begin{equation*}
\aligned
\mc{Z}_1&=\mc{Z}_1\,\s\,\mc{D}\,\s\,\mc{Z}_1=\mc{Z}_1\,\s\,\mc{D}\,\s\,\mc{D}^{\ep}=\mc{Z}_1\,\s\,\mc{D}\,\s\,\mc{Z}_2=\mc{D}^{\dagger}\,\s\,\mc{D}\,\s\,\mc{D}^{\ep}\,\s\,\mc{D}\,\s\,\mc{Z}_2\\
&=\mc{Z}_2\,\s\,\mc{D}\,\s\,\mc{Z}_2=\mc{Z}_2.
\endaligned
\end{equation*}
\end{proof}

Corollary \ref{Cor24T} is verified using similar principles as in the proof of Theorem \ref{thmm2.3}.
\begin{corollary}\label{Cor24T}
%Let $\mc{D}\in \mathbb C^{\textbf{N}(s)\times \textbf{N}(s)}$  with $ind(\mc{D})=k$.
The {\rm CEPMP} inverse $\mc{D}^{\ep,\dagger}$ is the unique solution of the following tensor equations:
   \begin{equation*}
       \mc{Z}\,\s\,\mc{D}\,\s\,\mc{Z}=\mc{Z},~  \mc{D}\,\s\,\mc{Z}=\mc{D}\,\s\,\mc{D}^{\ep}\,\s\,\mc{D}\,\s\,\mc{D}^{\dagger},~\mc{Z}\,\s\,\mc{D}=\mc{D}^{\ep}\,\s\,\mc{D}.
   \end{equation*}
\end{corollary}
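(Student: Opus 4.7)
The plan is to mirror, with the factors interchanged, the argument used to prove Theorem \ref{thmm2.3}. First I would verify that $\mc{Z} = \mc{D}^{\ep,\dagger} = \mc{D}^{\ep}\,\s\,\mc{D}\,\s\,\mc{D}^{\dagger}$ satisfies the three prescribed equations. The identity $\mc{D}\,\s\,\mc{Z} = \mc{D}\,\s\,\mc{D}^{\ep}\,\s\,\mc{D}\,\s\,\mc{D}^{\dagger}$ is immediate from the definition, while
\begin{equation*}
\mc{Z}\,\s\,\mc{D} = \mc{D}^{\ep}\,\s\,\mc{D}\,\s\,\mc{D}^{\dagger}\,\s\,\mc{D} = \mc{D}^{\ep}\,\s\,\mc{D}
\end{equation*}
follows from the Moore--Penrose inner-inverse identity $\mc{D}\,\s\,\mc{D}^{\dagger}\,\s\,\mc{D} = \mc{D}$. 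The outer-inverse equation $\mc{Z}\,\s\,\mc{D}\,\s\,\mc{Z} = \mc{Z}$ is then obtained by computing $\mc{Z}\,\s\,\mc{D}\,\s\,\mc{Z} = \mc{D}^{\ep}\,\s\,\mc{D}\,\s\,\mc{D}^{\ep}\,\s\,\mc{D}\,\s\,\mc{D}^{\dagger}$ and invoking the absorption identity $\mc{D}^{\ep}\,\s\,\mc{D}\,\s\,\mc{D}^{\ep} = \mc{D}^{\ep}$ supplied by Lemma \ref{lm4.2}(ii).

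Next, for uniqueness, I would suppose that $\mc{Z}_1$ and $\mc{Z}_2$ are any two tensors satisfying the three equations, and show that every such solution collapses to $\mc{D}^{\ep,\dagger}$ via the chain
\begin{equation*}
\mc{Z}_i = \mc{Z}_i\,\s\,\mc{D}\,\s\,\mc{Z}_i = \mc{D}^{\ep}\,\s\,\mc{D}\,\s\,\mc{Z}_i = \mc{D}^{\ep}\,\s\,\mc{D}\,\s\,\mc{D}^{\ep}\,\s\,\mc{D}\,\s\,\mc{D}^{\dagger} = \mc{D}^{\ep}\,\s\,\mc{D}\,\s\,\mc{D}^{\dagger},
\end{equation*}
where the second equality uses the relation $\mc{Z}_i\,\s\,\mc{D} = \mc{D}^{\ep}\,\s\,\mc{D}$, the third uses $\mc{D}\,\s\,\mc{Z}_i = \mc{D}\,\s\,\mc{D}^{\ep}\,\s\,\mc{D}\,\s\,\mc{D}^{\dagger}$, and the final equality again applies the absorption identity of Lemma \ref{lm4.2}(ii). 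Consequently $\mc{Z}_1 = \mc{Z}_2 = \mc{D}^{\ep,\dagger}$.

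No serious obstacle is anticipated, since both the existence and the uniqueness parts reduce to a single algebraic simplification, namely $\mc{D}^{\ep}\,\s\,\mc{D}\,\s\,\mc{D}^{\ep} = \mc{D}^{\ep}$. The only place demanding mild care is to keep the order of the factors straight: the CEPMP inverse places $\mc{D}^{\ep}$ on the left and $\mc{D}^{\dagger}$ on the right, so the roles that $\mc{D}^{\dagger}\,\s\,\mc{D}$ and $\mc{D}\,\s\,\mc{D}^{\ep}$ played in the MPCEP argument for Theorem \ref{thmm2.3} are here exchanged with $\mc{D}^{\ep}\,\s\,\mc{D}$ and $\mc{D}\,\s\,\mc{D}^{\dagger}$ respectively.
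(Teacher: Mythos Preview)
Your proposal is correct and follows essentially the same approach as the paper, which explicitly states that Corollary~\ref{Cor24T} is verified using similar principles as in the proof of Theorem~\ref{thmm2.3}. The only cosmetic difference is that the paper's uniqueness argument for Theorem~\ref{thmm2.3} compares two solutions $\mc{Z}_1,\mc{Z}_2$ directly, whereas you show each $\mc{Z}_i$ collapses to $\mc{D}^{\ep,\dagger}$; the underlying manipulations are identical.
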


In view of Lemma \ref{lm4.2}, we obtain the following  representation for MPCEP and CEPMP inverse.
\begin{lemma}\label{lmm2.15}
  Let %$\mc{D}\in \mathbb C^{\textbf{N}(s)\times \textbf{N}(s)}$  with
  $\mathrm{ind}(\mc{D})=k$.
  Then for any positive integer $l \geq k$ it follows
  \begin{enumerate}
      \item[\rm (i)] $\mc{D}^{\dagger,\ep}=\mc{D}^{\dagger}\,\s\,\mc{D}^l\,\s\,(\mc{D}^l)^{\dagger}$.
      \item[\rm (ii)] $\mc{D}^{\ep,\dagger}=\mc{D}^{\D}\,\s\,\mc{D}^l\,\s\,(\mc{D}^l)^{\dagger}$.
  \end{enumerate}
\end{lemma}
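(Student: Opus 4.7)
The plan is to derive both identities by substituting the expression for the core-EP inverse given in Lemma \ref{lm4.2}(iii) and then collapsing the resulting product using standard identities for the Drazin and Moore--Penrose inverses, with an extra projector-swap step needed for part~(ii).

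For part (i), I would start from the definition $\mc{D}^{\dagger,\ep}=\mc{D}^\dagger\,\s\,\mc{D}\,\s\,\mc{D}^\ep$ and insert $\mc{D}^\ep = \mc{D}^\D\,\s\,\mc{D}^l\,\s\,(\mc{D}^l)^\dagger$ (valid for $l\ge k$ by Lemma \ref{lm4.2}(iii)), obtaining $\mc{D}^{\dagger,\ep}=\mc{D}^\dagger\,\s\,\mc{D}\,\s\,\mc{D}^\D\,\s\,\mc{D}^l\,\s\,(\mc{D}^l)^\dagger$. The task then reduces to verifying the key identity $\mc{D}\,\s\,\mc{D}^\D\,\s\,\mc{D}^l = \mc{D}^l$ for every $l\ge k$. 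This follows from the commutativity $\mc{D}\,\s\,\mc{D}^\D=\mc{D}^\D\,\s\,\mc{D}$ together with the defining relation $\mc{D}^\D\,\s\,\mc{D}^{k+1}=\mc{D}^k$, which give $\mc{D}\,\s\,\mc{D}^\D\,\s\,\mc{D}^l=\mc{D}^\D\,\s\,\mc{D}^{l+1}=\mc{D}^l$ by an easy induction on $l\ge k$. Substituting back collapses the two middle factors and yields (i).

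For part (ii), the starting step is analogous: insert Lemma \ref{lm4.2}(iii) into $\mc{D}^{\ep,\dagger}=\mc{D}^\ep\,\s\,\mc{D}\,\s\,\mc{D}^\dagger$, producing $\mc{D}^{\ep,\dagger}=\mc{D}^\D\,\s\,\mc{D}^l\,\s\,(\mc{D}^l)^\dagger\,\s\,\mc{D}\,\s\,\mc{D}^\dagger$. Now the factors to be absorbed sit on the \emph{right}, so the direct Drazin-type cancellation used in (i) is no longer available. The plan is to reduce to the identity $\mc{D}^l\,\s\,(\mc{D}^l)^\dagger\,\s\,\mc{D}\,\s\,\mc{D}^\dagger = \mc{D}^l\,\s\,(\mc{D}^l)^\dagger$ and to prove it by a projector-swap: both $\mc{D}\,\s\,\mc{D}^\dagger$ and $\mc{D}^l\,\s\,(\mc{D}^l)^\dagger$ are Hermitian idempotents (by the M-P axiom~(3) applied to $\mc{D}$ and to $\mc{D}^l$), so writing $A=\mc{D}^l\,\s\,(\mc{D}^l)^\dagger$ and $B=\mc{D}\,\s\,\mc{D}^\dagger$, taking adjoints gives $(AB)^*=BA$ and, since $A^*=A$, the equality $AB=A$ is equivalent to $BA=A$, namely $\mc{D}\,\s\,\mc{D}^\dagger\,\s\,\mc{D}^l\,\s\,(\mc{D}^l)^\dagger=\mc{D}^l\,\s\,(\mc{D}^l)^\dagger$. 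This last form follows at once from $\mc{D}\,\s\,\mc{D}^\dagger\,\s\,\mc{D}^l=\mc{D}^l$, itself a consequence of the M-P relation $\mc{D}\,\s\,\mc{D}^\dagger\,\s\,\mc{D}=\mc{D}$ applied through $\mc{D}^l=\mc{D}\,\s\,\mc{D}^{l-1}$.

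The main obstacle is precisely the step isolated in part~(ii): recognizing that Drazin-based simplifications alone cannot absorb $(\mc{D}^l)^\dagger\,\s\,\mc{D}\,\s\,\mc{D}^\dagger$, and that the correct device is a projector-swap exploiting the Hermitian nature of the two idempotents $\mc{D}\,\s\,\mc{D}^\dagger$ and $\mc{D}^l\,\s\,(\mc{D}^l)^\dagger$. Once this observation is in hand, both parts follow from short, essentially mechanical computations.
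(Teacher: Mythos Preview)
Your proposal is correct and follows essentially the same route as the paper: both parts start by inserting the representation $\mc{D}^{\ep}=\mc{D}^{\D}\,\s\,\mc{D}^l\,\s\,(\mc{D}^l)^{\dagger}$ from Lemma~\ref{lm4.2}(iii), part~(i) is collapsed via $\mc{D}\,\s\,\mc{D}^{\D}\,\s\,\mc{D}^l=\mc{D}^l$, and part~(ii) is handled by exactly the Hermitian projector-swap you describe---the paper carries it out as $\mc{D}^l\,\s\,(\mc{D}^l)^{\dagger}\,\s\,\mc{D}\,\s\,\mc{D}^{\dagger}=(\mc{D}\,\s\,\mc{D}^{\dagger}\,\s\,\mc{D}^l\,\s\,(\mc{D}^l)^{\dagger})^*$ and then uses $\mc{D}\,\s\,\mc{D}^{\dagger}\,\s\,\mc{D}^l=\mc{D}^l$, which is your argument written in coordinates.
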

\begin{proof}
     Using $\mc{D}^{\ep}=\mc{D}^{\D}\,\s\,\mc{D}^l\,\s\,(\mc{D}^l)^{\dagger}$ it is obtained
    \begin{equation*}        \mc{D}^{\dagger,\ep}=\mc{D}^{\dagger}\,\s\,\mc{D}\,\s\,\mc{D}^{\D}\,\s\,\mc{D}^l\,\s\,(\mc{D}^l)^{\dagger}=\mc{D}^{\dagger}\,\s\,\mc{D}^l\,\s\,(\mc{D}^l)^{\dagger},
    \end{equation*}
and
   \begin{equation*}
\aligned
\mc{D}^{\ep,\dagger}&=\mc{D}^{\D}\,\s\,\mc{D}^l\,\s\,(\mc{D}^l)^{\dagger}\,\s\,\mc{D}\,\s\,\mc{D}^{\dagger}=\mc{D}^{\D}\,\s\,(\mc{D}^l\,\s\,(\mc{D}^l)^{\dagger})^*\,\s\,(\mc{D}\,\s\,\mc{D}^{\dagger})^*\\   &=\mc{D}^{\D}\,\s\,(\mc{D}\,\s\,\mc{D}^{\dagger}\,\s\,\mc{D}^l\,\s\,(\mc{D}^l)^{\dagger})^*=\mc{D}^{\D}\,\s\,(\mc{D}^l\,\s\,(\mc{D}^l)^{\dagger})^*=\mc{D}^{\D}\,\s\,\mc{D}^l\,\s\,(\mc{D}^l)^{\dagger}.
\endaligned
   \end{equation*}
\end{proof}
\begin{theorem}
 % Let $\mc{D}\in \mathbb C^{\textbf{N}(s)\times \textbf{N}(s)}$  with
 For $\mathrm{ind}(\mc{D})=k$ it follows
  \begin{enumerate}
      \item[\rm (i)] $\mc{D}^{\dagger,\ep}=\mc{D}^{(2)}_{\rg(\mc{D}^{\dagger}\,\s\,\mc{D}^k),\nl((\mc{D}^k)^\dagger)}=\mc{D}^{(2)}_{\rg(\mc{D}^{\dagger}\,\s\,\mc{D}^k),\nl((\mc{D}^k)^*)}$.
      \item[\rm (ii)]  $\mc{D}^{\ep,\dagger}=\mc{D}^{(2)}_{\rg(\mc{D}^{k}),\nl((\mc{D}^k)^\dagger)}=\mc{D}^{(2)}_{\rg(\mc{D}^{k}),\nl((\mc{D}^k)^*)}$.
  \end{enumerate}
\end{theorem}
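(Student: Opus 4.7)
The plan is to identify $\mc{D}^{\dagger,\ep}$ and $\mc{D}^{\ep,\dagger}$ as outer inverses of $\mc{D}$ and then pin down their range and null space; the uniqueness of the outer inverse with prescribed range and null space then yields the claimed $\mc{D}^{(2)}_{\rg(\cdot),\nl(\cdot)}$ representations. The outer-inverse property itself is already in hand: for $\mc{D}^{\dagger,\ep}$ it is part of Theorem \ref{thmm2.3}, and for $\mc{D}^{\ep,\dagger}$ it follows from Corollary \ref{Cor24T}. The equivalence of the two null-space descriptions $\nl((\mc{D}^k)^\dagger)=\nl((\mc{D}^k)^*)$ is a general Moore--Penrose identity that I would handle once, using $(\mc{D}^k)^*=(\mc{D}^k)^*\,\s\,\mc{D}^k\,\s\,(\mc{D}^k)^\dagger$ in one direction and $(\mc{D}^k)^\dagger=(\mc{D}^k)^\dagger\,\s\,((\mc{D}^k)^\dagger)^*\,\s\,(\mc{D}^k)^*$ in the other.

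For the ranges I would take $l=k$ in Lemma \ref{lmm2.15} to write $\mc{D}^{\dagger,\ep}=\mc{D}^{\dagger}\,\s\,\mc{D}^k\,\s\,(\mc{D}^k)^\dagger$ and $\mc{D}^{\ep,\dagger}=\mc{D}^{\D}\,\s\,\mc{D}^k\,\s\,(\mc{D}^k)^\dagger$, and exploit that $\mc{D}^k\,\s\,(\mc{D}^k)^\dagger$ is the orthogonal projector onto $\rg(\mc{D}^k)$. In (i), the inclusion $\rg(\mc{D}^{\dagger,\ep})\subseteq\rg(\mc{D}^{\dagger}\,\s\,\mc{D}^k)$ is immediate, while the reverse follows from the identity $\mc{D}^{\dagger}\,\s\,\mc{D}^k\,\s\,\mc{G}=\mc{D}^{\dagger,\ep}\,\s\,(\mc{D}^k\,\s\,\mc{G})$, obtained by inserting $\mc{D}^k\,\s\,(\mc{D}^k)^\dagger\,\s\,\mc{D}^k=\mc{D}^k$. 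In (ii), $\rg(\mc{D}^{\ep,\dagger})\subseteq\rg(\mc{D}^{\D})=\rg(\mc{D}^k)$ uses the standard Drazin range identity, and for the reverse I would compute $\mc{D}^{\ep,\dagger}\,\s\,\mc{D}^{k+1}=\mc{D}^{\ep}\,\s\,\mc{D}\,\s\,\mc{D}^{\dagger}\,\s\,\mc{D}^{k+1}=\mc{D}^{\ep}\,\s\,\mc{D}^{k+1}=\mc{D}^k$, so that every $\mc{D}^k\,\s\,\mc{G}$ lies in $\rg(\mc{D}^{\ep,\dagger})$.

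The main obstacle is the reverse null-space inclusion. For (i), suppose $\mc{D}^{\dagger,\ep}\,\s\,\mc{E}=0$ and set $\mc{F}:=\mc{D}^k\,\s\,(\mc{D}^k)^\dagger\,\s\,\mc{E}\in\rg(\mc{D}^k)\subseteq\rg(\mc{D})$; writing $\mc{F}=\mc{D}\,\s\,\mc{G}$, the hypothesis gives $\mc{D}^{\dagger}\,\s\,\mc{F}=0$, and combining this with $\mc{D}\,\s\,\mc{D}^{\dagger}\,\s\,\mc{D}=\mc{D}$ forces $\mc{F}=\mc{D}\,\s\,\mc{D}^{\dagger}\,\s\,\mc{F}=0$; pre-multiplying by $(\mc{D}^k)^\dagger$ then yields $(\mc{D}^k)^\dagger\,\s\,\mc{E}=0$. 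For (ii) the same scheme works with $\mc{D}^{\dagger}$ replaced by $\mc{D}^{\D}$, this time using $\mc{D}\,\s\,\mc{D}^{\D}\,\s\,\mc{D}^k=\mc{D}^k$ (which follows from the commutativity $\mc{D}\,\s\,\mc{D}^{\D}=\mc{D}^{\D}\,\s\,\mc{D}$ together with $\mc{D}^{\D}\,\s\,\mc{D}^{k+1}=\mc{D}^k$) to collapse the intermediate tensor. Assembling the outer-inverse property, the computed range, and the computed null space, and invoking the uniqueness of the outer inverse with prescribed range and null space, completes both (i) and (ii).
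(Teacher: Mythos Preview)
Your proposal is correct and follows essentially the same route as the paper: verify the outer-inverse property, use the Lemma~\ref{lmm2.15} representation $\mc{D}^{\dagger,\ep}=\mc{D}^{\dagger}\,\s\,\mc{D}^k\,\s\,(\mc{D}^k)^\dagger$ (resp.\ $\mc{D}^{\ep,\dagger}=\mc{D}^{\D}\,\s\,\mc{D}^k\,\s\,(\mc{D}^k)^\dagger$) to read off the range and one null-space inclusion, and then establish the reverse null-space inclusion. The only cosmetic difference is that for $\nl(\mc{Z})\subseteq\nl((\mc{D}^k)^\dagger)$ the paper exhibits the factorization $(\mc{D}^k)^\dagger=(\mc{D}^k)^\dagger\,\s\,\mc{D}\,\s\,\mc{Z}$ directly, whereas you do an equivalent element-chasing argument through $\mc{F}=\mc{D}^k\,\s\,(\mc{D}^k)^\dagger\,\s\,\mc{E}$.
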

\begin{proof}
    (i) Let $\mc{Z}=\mc{D}^{\dagger,\ep}=\mc{D}^{\dagger}\,\s\,\mc{D}\,\s\,\mc{D}^{\ep}$. Then it follows
    \begin{equation*}      \mc{Z}\,\s\,\mc{D}\,\s\,\mc{Z}=\mc{D}^{\dagger}\,\s\,\mc{D}\,\s\,\mc{D}^{\ep}\,\s\,\mc{D}\,\s\,\mc{D}^{\dagger}\,\s\,\mc{D}\,\s\,\mc{D}^{\ep}=\mc{D}^{\dagger}\,\s\,\mc{D}\,\s\,\mc{D}^{\ep}=\mc{Z}.
    \end{equation*}
From $\mc{Z}=\mc{D}^{\dagger}\,\s\,\mc{D}\,\s\,\mc{D}^{\ep}=\mc{D}^{\dagger}\,\s\,(\mc{D})^k\,\s\,(\mc{D}^{\ep})^k$ and $\mc{D}^{\dagger}\,\s\,\mc{D}^k=\mc{D}^{\dagger}\,\s\,(\mc{D})^k\,\s\,(\mc{D}^k)^{\dagger}\,\s\,\mc{D}^k=\mc{D}^{\dagger,\ep}\,\s\,\mc{D}^k$, we obtain $\rg(\mc{Z})=\rg(\mc{D}^{\dagger}\,\s\,\mc{D}^k)$. By Lemma \ref{lmm2.15} (i), we have
\begin{equation*}
    \nl(\mc{Z})=\nl(\mc{D}^{\dagger}\,\s\,\mc{D}^k\,\s\,(\mc{D}^k)^{\dagger})\supseteq \nl((\mc{D}^k)^{\dagger}), \mbox{ and}
\end{equation*}
$\nl(\mc{Z})\subseteq \nl((\mc{D}^k)^{\dagger})$ is follows from the below identity:
\begin{equation*}
\aligned
    (\mc{D}^k)^{\dagger}&=(\mc{D}^k)^{\dagger}\,\s\,\mc{D}^k\,\s\,(\mc{D}^k)^{\dagger}=(\mc{D}^k)^{\dagger}\,\s\,\mc{D}\,\s\,\mc{D}^\D\,\s\,\mc{D}^k\,\s\,(\mc{D}^k)^{\dagger}=(\mc{D}^k)^{\dagger}\,\s\,\mc{D}\,\s\,\mc{D}^{\ep}\\
    &=(\mc{D}^k)^{\dagger}\,\s\,\mc{D}\,\s\,\mc{D}^{\dagger,\ep}=(\mc{D}^k)^{\dagger}\,\s\,\mc{D}\,\s\,\mc{Z}.
    \endaligned
\end{equation*}
It completes the proof due to the fact that $\nl((\mc{D}^k)^{\dagger})=\nl((\mc{D}^k)^{*})$.\\
(ii) The proof is similar to the part (i).
\end{proof}

\section{Applications in solving multilinear system }\label{SecSMS}

In this section, we will discuss the application of composite generalized inverses for a suitable inner and outer inverse. In the first three results, we chose the inner inverse as $A^{\dagger}$ and the outer inverse as $A^{\D,\dagger}$ as stated below.

\begin{theorem}
   % Let $\mc{D} \in  \mathbb C^{\textbf{N}(s)\times \textbf{N}(s)}$ and
   For $k=\mathrm{ind}(\mc{D})$  the general solution to the multilinear system
\begin{equation}\label{eqq4.1}
    \mc{D}^k\,\s\,\mc{Z}=\mc{D}^k\,\s\,\mc{D}^\dagger\,\s\,\mc{B}, ~~ \mc{B} \in  \mathbb C^{\textbf{N}(s)}
\end{equation}
    is expressed as
    \begin{equation}\label{eqq4.2}
        \mc{Z} = \mc{D}^{c,\dagger}\,\s\,\mc{B} +(\mc{I} - \mc{D}^{c,\dagger}\,\s\,\mc{D})\,\s\,\mc{Q}, \text{~where~}  \mc{Q} \in  \mathbb C^{\textbf{N}(s)} \mbox{ is arbitray}.
    \end{equation}
\end{theorem}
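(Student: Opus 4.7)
The plan is to use the standard solution-set decomposition: exhibit a particular solution, identify the homogeneous part as an explicit parameterization of the null space of $\mc{D}^k\,\s\,(\cdot)$, and then verify that no solutions are missed. Throughout I will use the three defining identities of $\mc{D}^{c,\dagger}$ from Theorem \ref{thm2.66}, namely $\mc{D}^{c,\dagger}\,\s\,\mc{D}\,\s\,\mc{D}^{c,\dagger}=\mc{D}^{c,\dagger}$, $\mc{D}\,\s\,\mc{D}^{c,\dagger}=\mc{D}\,\s\,\mc{D}^\D\,\s\,\mc{D}\,\s\,\mc{D}^{\dagger}$, and $\mc{D}^{c,\dagger}\,\s\,\mc{D}=\mc{D}^{\dagger}\,\s\,\mc{D}\,\s\,\mc{D}^\D\,\s\,\mc{D}$, together with the standard Drazin identities $\mc{D}\,\s\,\mc{D}^\D=\mc{D}^\D\,\s\,\mc{D}$ and $\mc{D}^{k+1}\,\s\,\mc{D}^\D=\mc{D}^k$, as well as the $\{1\}$-relation $\mc{D}\,\s\,\mc{D}^{\dagger}\,\s\,\mc{D}=\mc{D}$.

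\textbf{Step 1 (particular solution).} I plan to prove $\mc{D}^k\,\s\,\mc{D}^{c,\dagger}=\mc{D}^k\,\s\,\mc{D}^{\dagger}$, which will show that $\mc{Z}=\mc{D}^{c,\dagger}\,\s\,\mc{B}$ satisfies \eqref{eqq4.1}. Multiplying $\mc{D}\,\s\,\mc{D}^{c,\dagger}=\mc{D}\,\s\,\mc{D}^\D\,\s\,\mc{D}\,\s\,\mc{D}^{\dagger}$ on the left by $\mc{D}^{k-1}$ gives $\mc{D}^k\,\s\,\mc{D}^{c,\dagger}=\mc{D}^k\,\s\,\mc{D}^\D\,\s\,\mc{D}\,\s\,\mc{D}^{\dagger}$, and commutativity together with $\mc{D}^{k+1}\,\s\,\mc{D}^\D=\mc{D}^k$ collapses $\mc{D}^k\,\s\,\mc{D}^\D\,\s\,\mc{D}$ to $\mc{D}^k$.

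\textbf{Step 2 (homogeneous parameterization is contained in the null space).} I want to verify that $\mc{D}^k\,\s\,(\mc{I}-\mc{D}^{c,\dagger}\,\s\,\mc{D})\,\s\,\mc{Q}=\mc{O}$ for every $\mc{Q}$. This reduces to $\mc{D}^k\,\s\,\mc{D}^{c,\dagger}\,\s\,\mc{D}=\mc{D}^k$. Using $\mc{D}^{c,\dagger}\,\s\,\mc{D}=\mc{D}^{\dagger}\,\s\,\mc{D}\,\s\,\mc{D}^\D\,\s\,\mc{D}$ and $\mc{D}^k\,\s\,\mc{D}^{\dagger}\,\s\,\mc{D}=\mc{D}^{k-1}\,\s\,(\mc{D}\,\s\,\mc{D}^{\dagger}\,\s\,\mc{D})=\mc{D}^k$, followed by the same Drazin collapse $\mc{D}^k\,\s\,\mc{D}^\D\,\s\,\mc{D}=\mc{D}^k$, the identity drops out.

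\textbf{Step 3 (every solution has this form).} For an arbitrary solution $\mc{Z}$, set $\mc{Q}:=\mc{Z}-\mc{D}^{c,\dagger}\,\s\,\mc{B}$; by Step 1, $\mc{D}^k\,\s\,\mc{Q}=\mc{O}$. I then need to show $\mc{Q}=(\mc{I}-\mc{D}^{c,\dagger}\,\s\,\mc{D})\,\s\,\mc{Q}$, i.e.\ $\mc{D}^{c,\dagger}\,\s\,\mc{D}\,\s\,\mc{Q}=\mc{O}$. The key observation, and the main obstacle, is to derive from $\mc{D}^k\,\s\,\mc{Q}=\mc{O}$ that $\mc{D}^\D\,\s\,\mc{D}\,\s\,\mc{Q}=\mc{O}$; this is achieved by iterating the Drazin identity $\mc{D}^\D\,\s\,\mc{D}=\mc{D}^\D\,\s\,\mc{D}\,\s\,\mc{D}^\D\,\s\,\mc{D}$ to obtain $\mc{D}^\D\,\s\,\mc{D}=(\mc{D}^\D)^k\,\s\,\mc{D}^k$ (using commutativity of $\mc{D}$ with $\mc{D}^\D$), so that $\mc{D}^\D\,\s\,\mc{D}\,\s\,\mc{Q}=(\mc{D}^\D)^k\,\s\,\mc{D}^k\,\s\,\mc{Q}=\mc{O}$. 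Substituting into the expression for $\mc{D}^{c,\dagger}\,\s\,\mc{D}$ from Theorem \ref{thm2.66} then finishes the argument and yields $\mc{Z}=\mc{D}^{c,\dagger}\,\s\,\mc{B}+(\mc{I}-\mc{D}^{c,\dagger}\,\s\,\mc{D})\,\s\,\mc{Q}$.

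The principal technical difficulty is the ``null-space absorption'' required in Step 3; every other step is a short manipulation. I expect the cleanest write-up to isolate the identity $\mc{D}^\D\,\s\,\mc{D}=(\mc{D}^\D)^k\,\s\,\mc{D}^k$ as an auxiliary observation (it is essentially a restatement of the fact that $\mc{D}\,\s\,\mc{D}^\D$ is the spectral projector onto $\rg(\mc{D}^k)$), after which both Steps 2 and 3 become immediate from the characterization of $\mc{D}^{c,\dagger}$ in Theorem \ref{thm2.66}.
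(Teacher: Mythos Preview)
Your proposal is correct and follows essentially the same route as the paper: both arguments verify $\mc{D}^k\,\s\,\mc{D}^{c,\dagger}=\mc{D}^k\,\s\,\mc{D}^{\dagger}$ and $\mc{D}^k\,\s\,\mc{D}^{c,\dagger}\,\s\,\mc{D}=\mc{D}^k$ for the forward direction, and for the converse both reduce $\mc{D}^{c,\dagger}\,\s\,\mc{D}$ via Theorem~\ref{thm2.66} together with the key Drazin identity $\mc{D}^\D\,\s\,\mc{D}=(\mc{D}^\D)^k\,\s\,\mc{D}^k$. The only cosmetic difference is that the paper takes $\mc{Q}=\mc{Z}_1$ directly and shows $\mc{D}^{c,\dagger}\,\s\,\mc{D}\,\s\,\mc{Z}_1=\mc{D}^{c,\dagger}\,\s\,\mc{B}$, whereas you subtract the particular solution first; the computations are identical.
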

\begin{proof}
 Let $\mc{Z} = \mc{D}^{c,\dagger}\,\s\,\mc{B} +(\mc{I} - \mc{D}^{c,\dagger}\,\s\,\mc{D})\,\s\,\mc{Q}$. Then
 \begin{equation*}
     \mc{D}^k\,\s\,\mc{Z}=  \mc{D}^k\,\s\,\mc{D}^{c,\dagger}\,\s\,\mc{B}+(\mc{D}^k-\mc{D}^k\,\s\,\mc{D}^{c,\dagger}\,\s\,\mc{D})\,\s\,\mc{Q}=\mc{D}^k\,\s\,\mc{D}^{c,\dagger}\,\s\,\mc{B}=\mc{D}^k\,\s\,\mc{D}^{\dagger}\,\s\,\mc{B}.
 \end{equation*}
 Thus $\mc{Z}$ satisfies the equation \eqref{eqq4.1}.
 On other hand, if $\mc{Z}_1$ be any other solution of \eqref{eqq4.1}, then
 \begin{equation*}
 \aligned
 \mc{Z}_1&=\mc{D}^{c,\dagger}\,\s\,\mc{D}\,\s\,\mc{Z}_1+\mc{Z}_1-\mc{D}^{c,\dagger}\,\s\,\mc{D}\,\s\,\mc{Z}_1=\mc{D}^{\dagger}\,\s\,\mc{D}\,\s\,\mc{D}^\D\,\s\,\mc{D}\,\s\,\mc{Z}_1+\mc{Z}_1-\mc{D}^{c,\dagger}\,\s\,\mc{D}\,\s\,\mc{Z}_1\\
&=\mc{D}^{\dagger}\,\s\,\mc{D}\,\s\,(\mc{D}^\D)^k\,\s\,\mc{D}^k\,\s\,\mc{Z}_1+\mc{Z}_1-\mc{D}^{c,\dagger}\,\s\,\mc{D}\,\s\,\mc{Z}_1\\
&=\mc{D}^{\dagger}\,\s\,\mc{D}\,\s\,(\mc{D}^\D)^k\,\s\,\mc{D}^k\,\s\,\mc{D}^{\dagger}\,\s\,\mc{B}+\mc{Z}_1-\mc{D}^{c,\dagger}\,\s\,\mc{D}\,\s\,\mc{Z}_1\\
&=\mc{D}^{\dagger}\,\s\,\mc{D}\,\s\,\mc{D}^\D\,\s\,\mc{D}\,\s\,\mc{D}^{\dagger}\,\s\,\mc{B}+\mc{Z}_1-\mc{D}^{c,\dagger}\,\s\,\mc{D}\,\s\,\mc{Z}_1\\
&=\mc{D}^{c,\dagger}\,\s\,\mc{B}+\mc{Z}_1-\mc{D}^{c,\dagger}\,\s\,\mc{D}\,\s\,\mc{Z}_1=\mc{D}^{c,\dagger}\,\s\,\mc{B}+(\mc{I}-\mc{D}^{c,\dagger}\,\s\,\mc{D})\,\s\,\mc{Z}_1.
\endaligned
 \end{equation*}
 Thus $\mc{Z}_1$ is of the form \eqref{eqq4.2}, which completes the verification.
\end{proof}
\begin{theorem}
    Let %$\mc{D} \in  \mathbb C^{\textbf{N}(s)\times \textbf{N}(s)}$ and
    $k=\mathrm{ind}(\mc{D})$.
    Then  {$\mc{D}^{c,\dagger}\,\s\,\mc{B}$ } is the unique solution to the constrained multilinear system
\begin{equation}\label{eqqq4.3}
    \mc{D}\,\s\,\mc{Z}=\mc{B}, ~~ \mc{B} \in  \rg(\mc{D}^k),
\end{equation}
in the range space $\rg(\mc{D}^{\dagger}\,\s\,\mc{D}^k)$.
\end{theorem}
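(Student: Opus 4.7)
The plan is to verify the three requirements separately: (a) $\mc{Z}:=\mc{D}^{c,\dagger}\,\s\,\mc{B}$ solves \eqref{eqqq4.3}, (b) $\mc{Z}\in\rg(\mc{D}^{\dagger}\,\s\,\mc{D}^k)$, and (c) no other tensor in $\rg(\mc{D}^{\dagger}\,\s\,\mc{D}^k)$ solves the system. Throughout, I will exploit the explicit form $\mc{D}^{c,\dagger}=\mc{D}^{\dagger}\,\s\,\mc{D}\,\s\,\mc{D}^{\D}\,\s\,\mc{D}\,\s\,\mc{D}^{\dagger}$ (Table \ref{tab:cmpo}) and the two workhorse identities $\mc{D}\,\s\,\mc{D}^{\dagger}\,\s\,\mc{D}=\mc{D}$ and $\mc{D}\,\s\,\mc{D}^{\D}\,\s\,\mc{D}^{k}=\mc{D}^{k}$ (the latter from $\mc{D}^{\D}\,\s\,\mc{D}^{k+1}=\mc{D}^{k}$ together with the commutation rule).

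For existence, since $\mc{B}\in\rg(\mc{D}^k)$, Lemma \ref{range-stan} yields $\mc{U}$ with $\mc{B}=\mc{D}^{k}\,\s\,\mc{U}$. I first collapse $\mc{D}\,\s\,\mc{D}^{c,\dagger}=\mc{D}\,\s\,\mc{D}^{\D}\,\s\,\mc{D}\,\s\,\mc{D}^{\dagger}$ by absorbing $\mc{D}\,\s\,\mc{D}^{\dagger}\,\s\,\mc{D}=\mc{D}$. Then I compute
\begin{equation*}
\mc{D}\,\s\,\mc{D}^{c,\dagger}\,\s\,\mc{B}
=\mc{D}\,\s\,\mc{D}^{\D}\,\s\,\mc{D}\,\s\,\mc{D}^{\dagger}\,\s\,\mc{D}^{k}\,\s\,\mc{U}
=\mc{D}\,\s\,\mc{D}^{\D}\,\s\,\mc{D}^{k}\,\s\,\mc{U}
=\mc{D}^{k}\,\s\,\mc{U}=\mc{B},
\end{equation*}
where the middle equality uses $\mc{D}\,\s\,\mc{D}^{\dagger}\,\s\,\mc{D}^{k}=\mc{D}^{k}$ (peeling off one $\mc{D}$ from $\mc{D}^{k}$). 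For the range condition, the same chain gives $\mc{Z}=\mc{D}^{\dagger}\,\s\,\mc{D}\,\s\,\mc{D}^{\D}\,\s\,\mc{D}^{k}\,\s\,\mc{U}=\mc{D}^{\dagger}\,\s\,\mc{D}^{k}\,\s\,\mc{U}$, manifestly in $\rg(\mc{D}^{\dagger}\,\s\,\mc{D}^{k})$.

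For uniqueness, suppose $\mc{Z}_{1}$ also solves \eqref{eqqq4.3} with $\mc{Z}_{1}\in\rg(\mc{D}^{\dagger}\,\s\,\mc{D}^{k})$. By Lemma \ref{range-stan} write $\mc{Z}_{1}=\mc{D}^{\dagger}\,\s\,\mc{D}^{k}\,\s\,\mc{V}$; applying $\mc{D}\,\s\,(\cdot)$ and using $\mc{D}\,\s\,\mc{D}^{\dagger}\,\s\,\mc{D}^{k}=\mc{D}^{k}$ gives $\mc{B}=\mc{D}^{k}\,\s\,\mc{V}$. Substituting this representation of $\mc{B}$ into $\mc{D}^{c,\dagger}\,\s\,\mc{B}$ and running the same simplification as above yields $\mc{D}^{c,\dagger}\,\s\,\mc{B}=\mc{D}^{\dagger}\,\s\,\mc{D}^{k}\,\s\,\mc{V}=\mc{Z}_{1}$.

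The only mildly delicate step is the reduction $\mc{D}\,\s\,\mc{D}^{\D}\,\s\,\mc{D}^{k}=\mc{D}^{k}$: it must be written through the commutation $\mc{D}\,\s\,\mc{D}^{\D}=\mc{D}^{\D}\,\s\,\mc{D}$ and then $\mc{D}^{\D}\,\s\,\mc{D}^{k+1}=\mc{D}^{k}$, rather than assumed. Once this identity (and its symmetric counterpart $\mc{D}\,\s\,\mc{D}^{\dagger}\,\s\,\mc{D}^{k}=\mc{D}^{k}$) is in hand, everything else is bookkeeping. An alternative, more conceptual route would invoke Theorem \ref{thm2.66}: the equation $\mc{D}\,\s\,\mc{D}^{c,\dagger}=\mc{D}\,\s\,\mc{D}^{\D}\,\s\,\mc{D}\,\s\,\mc{D}^{\dagger}$ and the image identification $\rg(\mc{D}^{\dagger}\,\s\,\mc{D}\,\s\,\mc{D}^{\D}\,\s\,\mc{D})=\rg(\mc{D}^{\dagger}\,\s\,\mc{D}^{k})$ characterise $\mc{D}^{c,\dagger}$ as an outer inverse with prescribed range, from which both existence and uniqueness follow at once; I would mention this as a remark but carry out the direct verification as the main proof.
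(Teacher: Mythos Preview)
Your proof is correct. The existence and range-membership parts are essentially the same as the paper's: you write $\mc{B}=\mc{D}^{k}\,\s\,\mc{U}$ directly, while the paper first notes $\rg(\mc{D}^{k})=\rg(\mc{D}\,\s\,\mc{D}^{c,\dagger})$ and writes $\mc{B}=\mc{D}\,\s\,\mc{D}^{c,\dagger}\,\s\,\mc{T}$, but the ensuing computations are the same identities in a slightly different order.

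The uniqueness arguments differ. The paper argues via the idempotent $\mc{D}^{c,\dagger}\,\s\,\mc{D}$: if $\mc{Z}_{1},\mc{Z}_{2}$ are two solutions in $\rg(\mc{D}^{\dagger}\,\s\,\mc{D}^{k})$, then $\mc{Z}_{1}-\mc{Z}_{2}\in\nl(\mc{D})\cap\rg(\mc{D}^{\dagger}\,\s\,\mc{D}^{k})\subseteq\nl(\mc{D}^{c,\dagger}\,\s\,\mc{D})\cap\rg(\mc{D}^{c,\dagger}\,\s\,\mc{D})=\{0\}$. Your route is more elementary and self-contained: from $\mc{Z}_{1}=\mc{D}^{\dagger}\,\s\,\mc{D}^{k}\,\s\,\mc{V}$ you recover $\mc{B}=\mc{D}^{k}\,\s\,\mc{V}$ and then compute $\mc{D}^{c,\dagger}\,\s\,\mc{B}=\mc{Z}_{1}$ directly. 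Your argument avoids invoking the projection structure of $\mc{D}^{c,\dagger}\,\s\,\mc{D}$ and the inclusion $\rg(\mc{D}^{\dagger}\,\s\,\mc{D}^{k})\subseteq\rg(\mc{D}^{c,\dagger}\,\s\,\mc{D})$, at the cost of repeating the reduction chain once more; the paper's version is shorter but leans on those structural facts. Both are valid, and your closing remark about the outer-inverse characterisation via Theorem~\ref{thm2.66} is a nice bridge to the paper's viewpoint.
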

\begin{proof}
First observe that $\rg(\mc{D}^k)=\rg(\mc{D}\,\s\,\mc{D}^{c,\dagger})$ is follows from
\begin{equation*}
    \mc{D}^k=\mc{D}\,\s\,\mc{D}^{c,\dagger}\,\s\,\mc{D}^k \mbox{ and } \mc{D}\,\s\,\mc{D}^{c,\dagger}=\mc{D}^k\,\s\,(\mc{D}^\D)^k\,\s\,\mc{D}\,\s\,\mc{D}^\dagger.
\end{equation*}
Let $ \mc{B} \in  \rg(\mc{D}^k)=\rg(\mc{D}\,\s\,\mc{D}^{c,\dagger})$. Then we obtain $\mc{B}=\mc{D}\,\s\,\mc{D}^{c,\dagger}\,\s\,\mc{D}$ for some $\mc{D}\in \mathbb C^{\textbf{N}(s)\times \textbf{N}(s)}$.  Now
\begin{equation*}
\aligned
    \mc{D}\,\s\,\mc{D}^{c,\dagger}\,\s\,\mc{B}&=\mc{D}\,\s\,\mc{D}^{c,\dagger}\,\s\,\mc{D}\,\s\,\mc{D}^{c,\dagger}\,\s\,\mc{D}\\
    &=\mc{D}\,\s\,\mc{D}^\D\,\s\,\mc{D}\,\s\,\mc{D}^{\dagger}\,\s\,\mc{D}=\mc{D}\,\s\,\mc{D}^{c,\dagger}\,\s\,\mc{D}\\
    &=\mc{B},
    \endaligned
\end{equation*}
and $\mc{D}^{c,\dagger}\,\s\,\mc{B}=\mc{D}^{\dagger}\,\s\,\mc{D}^k\,\s\,(\mc{D}^\D)^k\,\s\,\mc{D}\,\s\,\mc{D}^{\dagger}\,\s\,\mc{B}$.
Thus $\mc{D}^{c,\dagger}\,\s\,\mc{B}\in \rg(\mc{D}^{\dagger}\,\s\,\mc{D}^k)$, and satisfying the equation \eqref{eqqq4.3}.
To show the uniqueness of the solution, let $\mc{Z}_1,\mc{Z}_2\in\rg(\mc{D}^{\dagger}\,\s\,\mc{D}^k)$ be two solution of \eqref{eqqq4.3}.
Then
\begin{equation*}
    \mc{Z}_1-\mc{Z}_2\in \nl(\mc{D})\cap\rg(\mc{D}^{\dagger}\,\s\,\mc{D}^k) \subseteq \nl(\mc{D}^{c,\dagger}\,\s\,\mc{D})\cap \rg(\mc{D}^{c,\dagger}\,\s\,\mc{D}) = \{0\}.
\end{equation*}
Hence $\mc{D}^{c,\dagger}\,\s\,\mc{B}$ is  {the unique solution \eqref{eqqq4.3} in $\rg(\mc{D}^{\dagger}\,\s\,\mc{D}^k)$.}
\end{proof}

\begin{theorem}
The general solution to the multilinear system
  \begin{equation}\label{eqq4.6}    \mc{D}\,\s\,\mc{Z}=\mc{D}\,\s\,\mc{D}^{c,\dagger}\,\s\,\mc{B}, \ \  \mc{B} \in  \mathbb C^{\textbf{N}(n)},\ k=\mathrm{ind}(\mc{D})
\end{equation}
is given by
\begin{equation}\label{eqq4.7}
        \mc{Z} = \mc{D}^{c,\dagger}\,\s\,\mc{B} +(\mc{I} - \mc{D}^{\dagger}\,\s\,\mc{D})\,\s\,\mc{Q}, \mbox{ where } \mc{Q} \in  \mathbb C^{\textbf{N}(s)} \mbox{ is arbtrary}.
    \end{equation}
\end{theorem}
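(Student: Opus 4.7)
The plan is to follow the usual two-part scheme for describing the general solution of a linear tensor equation: first verify that every tensor of the stated form \eqref{eqq4.7} solves \eqref{eqq4.6}, and then show that every solution admits this representation for a suitable $\mc{Q}$.

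For the sufficiency part, I would substitute $\mc{Z} = \mc{D}^{c,\dagger}\,\s\,\mc{B} + (\mc{I} - \mc{D}^{\dagger}\,\s\,\mc{D})\,\s\,\mc{Q}$ into the left-hand side of \eqref{eqq4.6} and distribute $\mc{D}\,\s\,(\cdot)$. The cross term yields $(\mc{D} - \mc{D}\,\s\,\mc{D}^{\dagger}\,\s\,\mc{D})\,\s\,\mc{Q}$, which vanishes because the Moore-Penrose axiom $\mc{D}\,\s\,\mc{D}^{\dagger}\,\s\,\mc{D} = \mc{D}$ holds. Hence $\mc{D}\,\s\,\mc{Z}=\mc{D}\,\s\,\mc{D}^{c,\dagger}\,\s\,\mc{B}$, which is what is required.

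For the necessity part, let $\mc{Z}_1$ be an arbitrary solution of \eqref{eqq4.6}. The trivial decomposition $\mc{Z}_1 = \mc{D}^{\dagger}\,\s\,\mc{D}\,\s\,\mc{Z}_1 + (\mc{I} - \mc{D}^{\dagger}\,\s\,\mc{D})\,\s\,\mc{Z}_1$ reduces the problem to identifying the first summand with $\mc{D}^{c,\dagger}\,\s\,\mc{B}$. Pre-multiplying \eqref{eqq4.6} by $\mc{D}^{\dagger}$ gives $\mc{D}^{\dagger}\,\s\,\mc{D}\,\s\,\mc{Z}_1 = \mc{D}^{\dagger}\,\s\,\mc{D}\,\s\,\mc{D}^{c,\dagger}\,\s\,\mc{B}$, so the key algebraic identity I need is
\begin{equation*}
\mc{D}^{\dagger}\,\s\,\mc{D}\,\s\,\mc{D}^{c,\dagger} = \mc{D}^{c,\dagger}.
\end{equation*}
Using the definition $\mc{D}^{c,\dagger} = \mc{D}^{\dagger}\,\s\,\mc{D}\,\s\,\mc{D}^{\D}\,\s\,\mc{D}\,\s\,\mc{D}^{\dagger}$ (see Table \ref{tab:cmpo}) and the Moore-Penrose property $\mc{D}^{\dagger}\,\s\,\mc{D}\,\s\,\mc{D}^{\dagger} = \mc{D}^{\dagger}$, this identity is a short verification. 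Choosing $\mc{Q}=\mc{Z}_1$ then yields the representation \eqref{eqq4.7}.

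No step looks like a serious obstacle; the only point that demands care is the identity $\mc{D}^{\dagger}\,\s\,\mc{D}\,\s\,\mc{D}^{c,\dagger} = \mc{D}^{c,\dagger}$, and this follows directly from the CMP definition together with the idempotence of $\mc{D}^{\dagger}\,\s\,\mc{D}$ as a projector (equivalently, from $\mc{D}^{\dagger}\,\s\,\mc{D}\,\s\,\mc{D}^{\dagger} = \mc{D}^{\dagger}$). The index hypothesis $k = \mathrm{ind}(\mc{D})$ is used only implicitly through the existence of $\mc{D}^{\D}$ appearing inside $\mc{D}^{c,\dagger}$.
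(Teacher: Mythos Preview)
Your proposal is correct and follows essentially the same route as the paper: both verify sufficiency via $\mc{D}\,\s\,\mc{D}^{\dagger}\,\s\,\mc{D}=\mc{D}$, and both handle necessity by writing $\mc{Z}_1=\mc{D}^{\dagger}\,\s\,\mc{D}\,\s\,\mc{Z}_1+(\mc{I}-\mc{D}^{\dagger}\,\s\,\mc{D})\,\s\,\mc{Z}_1$ and reducing the first summand to $\mc{D}^{c,\dagger}\,\s\,\mc{B}$ via $\mc{D}^{\dagger}\,\s\,\mc{D}\,\s\,\mc{D}^{c,\dagger}=\mc{D}^{c,\dagger}$. The only difference is presentational: you isolate and justify this identity explicitly, whereas the paper absorbs it into the displayed chain of equalities.
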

\begin{proof}
    Let  $\mc{Z} = \mc{D}^{c,\dagger}\,\s\,\mc{B} +(\mc{I} - \mc{D}^{\dagger}\,\s\,\mc{D})\,\s\,\mc{Q}$. Then
    \begin{equation*}
       \mc{D}\,\s\,\mc{Z}= \mc{D}\,\s\, \mc{D}^{c,\dagger}\,\s\,\mc{B}+(\mc{D}-\mc{D}\,\s\,\mc{D}^{\dagger}\,\s\,\mc{D})\,\s\,\mc{Q}=\mc{D}\,\s\, \mc{D}^{c,\dagger}\,\s\,\mc{B}.
    \end{equation*}
    If $\mc{Z}_1$ is any other solution of \eqref{eqq4.6}, then $\mc{D}\,\s\,\mc{Z}_1=\mc{D}\,\s\,\mc{D}^{c,\dagger}\,\s\,\mc{B}$. Now
\begin{equation*}
\aligned
\mc{Z}_1&=\mc{D}^{\dagger}\,\s\,\mc{D}\,\s\,\mc{Z}_1+\mc{Z}_1-\mc{D}^{\dagger}\,\s\,\mc{D}\,\s\,\mc{Z}_1=\mc{D}^{\dagger}\,\s\,\mc{D}\,\s\,\mc{D}^{c,\dagger}\,\s\,\mc{B}+\mc{Z}_1-\mc{D}^{\dagger}\,\s\,\mc{D}\,\s\,\mc{Z}_1\\
&=\mc{D}^{c,\dagger}\,\s\,\mc{B}+\mc{Z}_1-\mc{D}^{\dagger}\,\s\,\mc{D}\,\s\,\mc{Z}_1=\mc{D}^{c,\dagger}\,\s\,\mc{B}+(\mc{I}-\mc{D}^{\dagger}\,\s\,\mc{D})\,\s\,\mc{Z}_1.
\endaligned
\end{equation*}
Thus $\mc{Z}_1$ is of the form \eqref{eqq4.7}, and hence $\mc{Z}$ is the general of \eqref{eqq4.6}.
\end{proof}
In the next result, we choose $\mc{D}^{\dagger}\in \mc{D}\{1\}$ and $\mc{D}^{\D}\in \mc{D}\{2\}$. \begin{theorem}
    Let %$\mc{D} \in  \mathbb C^{\textbf{N}(s)\times \textbf{N}(s)}$ satisfy
    $k=\mathrm{ind}(\mc{D})$.
    Then $\mc{D}^{\D,\dagger}\,\s\,\mc{B}$ is the unique solution to the multilinear system
\begin{equation}\label{eqq4.3}
    \mc{D}\,\s\,\mc{Z}=\mc{B}, ~~ \mc{B} \in  \rg(\mc{D}^k),
\end{equation}
in the range space $\rg(\mc{D}^{\D}\,\s\,\mc{D}^k)$.
\end{theorem}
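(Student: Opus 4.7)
The plan is to prove the claim in three steps paralleling the preceding result for $\mc{D}^{c,\dagger}$: (i) verify $\mc{D}^{\D,\dagger}\,\s\,\mc{B}\in\rg(\mc{D}^{\D}\,\s\,\mc{D}^k)$; (ii) check it solves $\mc{D}\,\s\,\mc{Z}=\mc{B}$ for $\mc{B}\in\rg(\mc{D}^k)$; (iii) establish uniqueness by showing $\nl(\mc{D})\cap\rg(\mc{D}^{\D}\,\s\,\mc{D}^k)=\{\mc{O}\}$. Throughout, the key identities are the Drazin relations $\mc{D}^{\D}\,\s\,\mc{D}^{k+1}=\mc{D}^k$, $\mc{D}\,\s\,\mc{D}^{\D}=\mc{D}^{\D}\,\s\,\mc{D}$, and the standard projection property $\mc{D}\,\s\,\mc{D}^{\dagger}\,\s\,\mc{B}=\mc{B}$ whenever $\mc{B}\in\rg(\mc{D})$.

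For (i), expand $\mc{D}^{\D,\dagger}\,\s\,\mc{B}=\mc{D}^{\D}\,\s\,\mc{D}\,\s\,\mc{D}^{\dagger}\,\s\,\mc{B}$. Because $\mc{D}\,\s\,\mc{D}^{\D}$ is idempotent (by condition $(2)$ of the Drazin inverse), one has $\mc{D}\,\s\,\mc{D}^{\D}=(\mc{D}\,\s\,\mc{D}^{\D})^k=\mc{D}^k\,\s\,(\mc{D}^{\D})^k$, and hence
\begin{equation*}
\mc{D}^{\D}\,\s\,\mc{D}=\mc{D}^{\D}\,\s\,\mc{D}^k\,\s\,(\mc{D}^{\D})^{k-1}.
\end{equation*}
Substituting, we obtain
\begin{equation*}
\mc{D}^{\D,\dagger}\,\s\,\mc{B}=\mc{D}^{\D}\,\s\,\mc{D}^k\,\s\,(\mc{D}^{\D})^{k-1}\,\s\,\mc{D}^{\dagger}\,\s\,\mc{B}\in\rg(\mc{D}^{\D}\,\s\,\mc{D}^k).
\end{equation*}

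For (ii), fix $\mc{B}=\mc{D}^k\,\s\,\mc{C}$ with $k\geq 1$, so $\mc{B}\in\rg(\mc{D})$ and therefore $\mc{D}\,\s\,\mc{D}^{\dagger}\,\s\,\mc{B}=\mc{B}$. Using $\mc{D}\,\s\,\mc{D}^{\D}\,\s\,\mc{D}^k=\mc{D}^{\D}\,\s\,\mc{D}^{k+1}=\mc{D}^k$, a direct calculation gives
\begin{equation*}
\mc{D}\,\s\,\mc{D}^{\D,\dagger}\,\s\,\mc{B}=\mc{D}\,\s\,\mc{D}^{\D}\,\s\,\mc{D}\,\s\,\mc{D}^{\dagger}\,\s\,\mc{B}=\mc{D}\,\s\,\mc{D}^{\D}\,\s\,\mc{B}=\mc{D}\,\s\,\mc{D}^{\D}\,\s\,\mc{D}^k\,\s\,\mc{C}=\mc{D}^k\,\s\,\mc{C}=\mc{B}.
\end{equation*}

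For (iii), if $\mc{Z}_1,\mc{Z}_2\in\rg(\mc{D}^{\D}\,\s\,\mc{D}^k)$ are both solutions, then $\mc{W}:=\mc{Z}_1-\mc{Z}_2\in\nl(\mc{D})\cap\rg(\mc{D}^{\D}\,\s\,\mc{D}^k)$. Writing $\mc{W}=\mc{D}^{\D}\,\s\,\mc{D}^k\,\s\,\mc{U}$ and invoking $\mc{D}\,\s\,\mc{W}=\mc{O}$ together with $\mc{D}\,\s\,\mc{D}^{\D}\,\s\,\mc{D}^k=\mc{D}^k$, we get $\mc{D}^k\,\s\,\mc{U}=\mc{O}$, whence $\mc{W}=\mc{D}^{\D}\,\s\,(\mc{D}^k\,\s\,\mc{U})=\mc{O}$. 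The only real obstacle is bookkeeping with the interplay between the Drazin and Moore--Penrose identities; once the idempotence of $\mc{D}\,\s\,\mc{D}^{\D}$ and the range absorption $\mc{D}\,\s\,\mc{D}^{\dagger}\,\s\,\mc{D}^k=\mc{D}^k$ are used, the argument collapses into the same template as the preceding $\mc{D}^{c,\dagger}$ theorem.
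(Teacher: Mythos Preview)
Your proof is correct and follows the same overall template as the paper. Two minor differences are worth noting. First, you explicitly verify $\mc{D}^{\D,\dagger}\,\s\,\mc{B}\in\rg(\mc{D}^{\D}\,\s\,\mc{D}^k)$ in step~(i); the paper's proof omits this check, so your argument is actually more complete here. Second, for uniqueness the paper argues via the idempotent $\mc{D}^{\D,\dagger}\,\s\,\mc{D}$, using the inclusion
\[
\nl(\mc{D})\cap\rg(\mc{D}^{\D}\,\s\,\mc{D}^k)\subseteq \nl(\mc{D}^{\D,\dagger}\,\s\,\mc{D})\cap\rg(\mc{D}^{\D,\dagger}\,\s\,\mc{D})=\{0\},
\]
whereas you compute directly that $\mc{D}\,\s\,\mc{W}=\mc{O}$ forces $\mc{D}^k\,\s\,\mc{U}=\mc{O}$ and hence $\mc{W}=\mc{O}$. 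Both arguments are valid; yours is more elementary, the paper's is more structural (and reusable for the subsequent corollaries). Otherwise the existence step~(ii) is identical in substance to the paper's.
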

\begin{proof}
   Let $ \mc{B} \in  \rg(\mc{D}^k)$. Then $\mc{B}=\mc{D}^{k}\,\s\,\mc{Q}$ for some $\mc{Q}\in \mathbb C^{\textbf{N}(s)\times \textbf{N}(s)}$.
Now
\begin{equation*}
\mc{D}\,\s\,\mc{D}^{\D,\dagger}\,\s\,\mc{B}=\mc{D}\,\s\,\mc{D}^{\D}\,\s\,\mc{D}\,\s\,\mc{D}^{\dagger}\,\s\,\mc{D}^k\,\s\,\mc{Q}=\mc{D}^\D\,\s\,\mc{D}^{k+1}\,\s\,\mc{Q}=\mc{D}^{k}\,\s\,\mc{Q} =\mc{B}.
\end{equation*}
If $\mc{Z}_1,\mc{Z}_2\in\rg(\mc{D}^{\D}\,\s\,\mc{D}^k)$ are different solutions to \eqref{eqq4.3}, then
\begin{equation*}
    \mc{Z}_1-\mc{Z}_2\in \nl(\mc{D})\cap\rg(\mc{D}^{\D}\,\s\,\mc{D}^k) \subseteq \nl(\mc{D}^{\D,\dagger}\,\s\,\mc{D})\cap \rg(\mc{D}^{\D,\dagger}\,\s\,\mc{D}) = \{0\}.
\end{equation*}
Hence $\mc{D}^{c,\dagger}\,\s\,\mc{B}$ is the unique solver of \eqref{eqq4.3} in $\rg(\mc{D}^{\D}\,\s\,\mc{D}^k)$.
\end{proof}
\begin{corollary}
    {Let $\mathrm{ind}(\mc{D})= k $.}
    Then $A^{\dagger,\D}\,\s\,\mc{B}$ is the unique solution to the multilinear system
\begin{equation*}
    \mc{D}\,\s\,\mc{Z}=\mc{B}, ~~ \mc{B} \in  \rg(\mc{D}^k),
\end{equation*}
in the range space $\rg(\mc{D}^{\dagger}\,\s\,\mc{D}^k)$.
\end{corollary}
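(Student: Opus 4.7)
The plan is to mirror the proof of the preceding theorem verbatim, swapping the roles of $\mc{D}^{\D}$ and $\mc{D}^{\dagger}$ as outer and inner factors in the composite inverse. Since $\mc{D}^{\dagger,\D}=\mc{D}^{\dagger}\,\s\,\mc{D}\,\s\,\mc{D}^{\D}$ and $\mc{D}^{\dagger}\in\mc{D}\{1\}$, $\mc{D}^{\D}\in\mc{D}\{2\}$, the same two-step structure should work: first verify that $\mc{D}^{\dagger,\D}\,\s\,\mc{B}$ solves $\mc{D}\,\s\,\mc{Z}=\mc{B}$ and lies in the claimed range space, then establish uniqueness by showing that $\nl(\mc{D})\cap\rg(\mc{D}^{\dagger}\,\s\,\mc{D}^k)=\{0\}$.

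For existence, I would pick $\mc{Q}$ with $\mc{B}=\mc{D}^{k}\,\s\,\mc{Q}$ and compute
\begin{equation*}
\mc{D}\,\s\,\mc{D}^{\dagger,\D}\,\s\,\mc{B}=\mc{D}\,\s\,\mc{D}^{\dagger}\,\s\,\mc{D}\,\s\,\mc{D}^{\D}\,\s\,\mc{D}^k\,\s\,\mc{Q}=\mc{D}\,\s\,\mc{D}^{\dagger}\,\s\,\mc{D}^k\,\s\,\mc{Q}=\mc{D}^k\,\s\,\mc{Q}=\mc{B},
\end{equation*}
where I used the Drazin identity $\mc{D}\,\s\,\mc{D}^{\D}\,\s\,\mc{D}^k=\mc{D}^{\D}\,\s\,\mc{D}^{k+1}=\mc{D}^k$ together with the inner-inverse property $\mc{D}\,\s\,\mc{D}^{\dagger}\,\s\,\mc{D}=\mc{D}$. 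The same cancellation yields $\mc{D}^{\dagger,\D}\,\s\,\mc{B}=\mc{D}^{\dagger}\,\s\,\mc{D}^k\,\s\,\mc{Q}\in\rg(\mc{D}^{\dagger}\,\s\,\mc{D}^k)$, so $\mc{D}^{\dagger,\D}\,\s\,\mc{B}$ is a solution with the right range.

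For uniqueness, if $\mc{Z}_1,\mc{Z}_2\in\rg(\mc{D}^{\dagger}\,\s\,\mc{D}^k)$ both solve the equation, then $\mc{Z}_1-\mc{Z}_2\in\nl(\mc{D})\cap\rg(\mc{D}^{\dagger}\,\s\,\mc{D}^k)$, so I need to show that this intersection is trivial. The key intermediate fact is the range equality $\rg(\mc{D}^{\dagger,\D})=\rg(\mc{D}^{\dagger}\,\s\,\mc{D}^k)$: one inclusion comes from $\mc{D}^{\dagger,\D}\,\s\,\mc{D}^k=\mc{D}^{\dagger}\,\s\,\mc{D}^k$ (same Drazin cancellation as above), while the reverse inclusion uses the identity $\mc{D}\,\s\,\mc{D}^{\D}=\mc{D}^k\,\s\,(\mc{D}^{\D})^k$, giving $\mc{D}^{\dagger,\D}=\mc{D}^{\dagger}\,\s\,\mc{D}^k\,\s\,(\mc{D}^{\D})^k$. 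By Proposition \ref{prop2.2}(ii) we have $\mc{D}^{\dagger,\D}\in\mc{D}\{2\}$, hence $\mc{D}^{\dagger,\D}\,\s\,\mc{D}$ is idempotent with range equal to $\rg(\mc{D}^{\dagger,\D})=\rg(\mc{D}^{\dagger}\,\s\,\mc{D}^k)$. Combining these observations with the obvious inclusion $\nl(\mc{D})\subseteq\nl(\mc{D}^{\dagger,\D}\,\s\,\mc{D})$ gives
\begin{equation*}
\nl(\mc{D})\cap\rg(\mc{D}^{\dagger}\,\s\,\mc{D}^k)\subseteq\nl(\mc{D}^{\dagger,\D}\,\s\,\mc{D})\cap\rg(\mc{D}^{\dagger,\D}\,\s\,\mc{D})=\{0\},
\end{equation*}
which settles uniqueness.

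The only potentially delicate step is the range equality $\rg(\mc{D}^{\dagger,\D})=\rg(\mc{D}^{\dagger}\,\s\,\mc{D}^k)$; once the standard Drazin identity $\mc{D}\,\s\,\mc{D}^{\D}=\mc{D}^k\,\s\,(\mc{D}^{\D})^k$ is in hand (a consequence of $\mc{D}^{\D}\,\s\,\mc{D}^{k+1}=\mc{D}^k$ together with commutativity $\mc{D}\,\s\,\mc{D}^{\D}=\mc{D}^{\D}\,\s\,\mc{D}$), everything else is a direct transcription of the preceding theorem with the roles of $\mc{D}^{\D}$ and $\mc{D}^{\dagger}$ interchanged.
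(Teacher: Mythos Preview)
Your proof is correct and follows exactly the template the paper intends: the corollary is stated without proof in the paper, being a direct adaptation of the preceding theorem with $\mc{D}^{\D,\dagger}$ replaced by $\mc{D}^{\dagger,\D}$, and you have carried out precisely that adaptation. The extra care you take in justifying the range equality $\rg(\mc{D}^{\dagger,\D})=\rg(\mc{D}^{\dagger}\,\s\,\mc{D}^k)$ via $\mc{D}\,\s\,\mc{D}^{\D}=\mc{D}^k\,\s\,(\mc{D}^{\D})^k$ is a detail the paper leaves implicit even in the theorem, so your argument is in fact slightly more complete than what the paper writes.
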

\begin{theorem}\label{thmm3.6}
    { Let %$\mc{D}\in \mathbb C^{\textbf{N}(s)\times \textbf{N}(s)}$  with
    $\mathrm{ind}(\mc{D})=k$.} Then  the tensor equation
    \begin{equation*}        \mc{D}\,\s\,\mc{Z}=\mc{D}^k\,\s\,(\mc{D}^k)^{\dagger}\,\s\,\mc{B},\ \ \mc{B}\in \mathbb C^{\textbf{N}(s)}
    \end{equation*}
  is solvable, and its general solution is
   \begin{equation}\label{eqqq3.7}
        \mc{Z} = \mc{D}^{\dagger,\ep}\,\s\,\mc{B} +(\mc{I} - \mc{D}^{\dagger}\,\s\,\mc{D})\,\s\,\mc{Q}, \text{~where~}  \mc{Q} \in  \mathbb C^{\textbf{N}(s)} \mbox{ is arbitray}.
    \end{equation}
\end{theorem}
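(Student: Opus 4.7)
The plan is to mirror the structure of the preceding solvability theorems in the paper: first exhibit a particular solution via the representation in Lemma \ref{lmm2.15}(i), then verify the form \eqref{eqqq3.7} gives solutions, and finally show every solution has this form. The whole argument rides on two identities: $\mc{D}\,\s\,\mc{D}^{\dagger}\,\s\,\mc{D}=\mc{D}$ (hence $\mc{D}\,\s\,\mc{D}^{\dagger}\,\s\,\mc{D}^k=\mc{D}^k$) and $\mc{D}^{\dagger,\ep}=\mc{D}^{\dagger}\,\s\,\mc{D}^k\,\s\,(\mc{D}^k)^{\dagger}$.

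First I would establish solvability by taking $\mc{Z}_0=\mc{D}^{\dagger,\ep}\,\s\,\mc{B}$. Using Lemma \ref{lmm2.15}(i), substitute to get
\begin{equation*}
\mc{D}\,\s\,\mc{Z}_0=\mc{D}\,\s\,\mc{D}^{\dagger}\,\s\,\mc{D}^k\,\s\,(\mc{D}^k)^{\dagger}\,\s\,\mc{B}=\mc{D}^k\,\s\,(\mc{D}^k)^{\dagger}\,\s\,\mc{B},
\end{equation*}
where the last equality uses $\mc{D}\,\s\,\mc{D}^{\dagger}\,\s\,\mc{D}^k=\mc{D}^k$. This shows both solvability and that $\mc{D}^{\dagger,\ep}\,\s\,\mc{B}$ is a particular solution.

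Next I would show every tensor of the form \eqref{eqqq3.7} solves the equation. For arbitrary $\mc{Q}$, direct substitution gives
\begin{equation*}
\mc{D}\,\s\,\mc{Z}=\mc{D}\,\s\,\mc{D}^{\dagger,\ep}\,\s\,\mc{B}+(\mc{D}-\mc{D}\,\s\,\mc{D}^{\dagger}\,\s\,\mc{D})\,\s\,\mc{Q}=\mc{D}^k\,\s\,(\mc{D}^k)^{\dagger}\,\s\,\mc{B},
\end{equation*}
since the second summand vanishes by the Moore-Penrose equation (1).

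Finally, for the converse, let $\mc{Z}_1$ be any solution. Using the trivial decomposition $\mc{Z}_1=\mc{D}^{\dagger}\,\s\,\mc{D}\,\s\,\mc{Z}_1+(\mc{I}-\mc{D}^{\dagger}\,\s\,\mc{D})\,\s\,\mc{Z}_1$, substitute the equation $\mc{D}\,\s\,\mc{Z}_1=\mc{D}^k\,\s\,(\mc{D}^k)^{\dagger}\,\s\,\mc{B}$ into the first term to obtain $\mc{D}^{\dagger}\,\s\,\mc{D}^k\,\s\,(\mc{D}^k)^{\dagger}\,\s\,\mc{B}=\mc{D}^{\dagger,\ep}\,\s\,\mc{B}$ by Lemma \ref{lmm2.15}(i). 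This recovers \eqref{eqqq3.7} with $\mc{Q}=\mc{Z}_1$. There is no real obstacle here; the only subtlety is recognizing that both directions pivot on the same representation of $\mc{D}^{\dagger,\ep}$, and the proof parallels that of the analogous result for $\mc{D}^{c,\dagger}$ given earlier in Section \ref{SecSMS}.
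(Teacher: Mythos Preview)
Your proof is correct and follows essentially the same strategy as the paper: verify that \eqref{eqqq3.7} solves the equation, then decompose an arbitrary solution $\mc{Z}_1$ as $\mc{D}^{\dagger}\,\s\,\mc{D}\,\s\,\mc{Z}_1+(\mc{I}-\mc{D}^{\dagger}\,\s\,\mc{D})\,\s\,\mc{Z}_1$ and substitute. The only cosmetic difference is that you invoke Lemma~\ref{lmm2.15}(i) directly to identify $\mc{D}^{\dagger}\,\s\,\mc{D}^k\,\s\,(\mc{D}^k)^{\dagger}$ with $\mc{D}^{\dagger,\ep}$, whereas the paper routes the same computation through Lemma~\ref{lm4.2}(iii) via $\mc{D}\,\s\,\mc{D}^{\ep}=\mc{D}\,\s\,\mc{D}^{\D}\,\s\,\mc{D}^k\,\s\,(\mc{D}^k)^{\dagger}=\mc{D}^k\,\s\,(\mc{D}^k)^{\dagger}$; your shortcut is slightly cleaner but not materially different.
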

\begin{proof}
 Consider $ \mc{Z} = \mc{D}^{\dagger,\ep}\,\s\,\mc{B} +(\mc{I} - \mc{D}^{\dagger}\,\s\,\mc{D})\,\s\,\mc{Q}$.
 Now an application of Lemma \ref{lm4.2} (iii) leads to
 \begin{equation*}
 \aligned
\mc{D}\,\s\,\mc{Z}&= {\mc{D}\,\s\,\mc{D}^{\dagger,\ep}\,\s\,\mc{B}}=\mc{D}\,\s\,\mc{D}^{\dagger}\,\s\,\mc{D}\,\s\,\mc{D}^{\ep}\,\s\,\mc{B}  \\ &=\mc{D}\,\s\,\mc{D}^{\ep}\,\s\,\mc{B}=\mc{D}\,\s\,\mc{D}^\D\,\s\,\mc{D}^k\,\s\,(\mc{D}^k)^{\dagger}\,\s\,\mc{B}\\
&=\mc{D}^k\,\s\,(\mc{D}^k)^{\dagger}\,\s\,\mc{B}.
\endaligned
 \end{equation*}
 If $\mc{Z}_1$ be any solution of \eqref{eqqq3.7}, then
 \begin{equation*}
 \aligned
\mc{Z}_1&=\mc{D}^{\dagger}\,\s\,\mc{D}\,\s\,\mc{Z}_1+\mc{Z}_1-\mc{D}^{\dagger}\,\s\,\mc{D}\,\s\,\mc{Z}_1=\mc{D}^{\dagger}\,\s\,\mc{D}^k\,\s\,(\mc{D}^k)^{\dagger}\,\s\,\mc{B}+(\mc{I}-\mc{D}^{\dagger}\,\s\,\mc{D})\,\s\,\mc{Z}_1\\  &=\mc{D}^{\dagger}\,\s\,\mc{D}\,\s\,\mc{D}^\D\,\s\,\mc{D}^k\,\s\,(\mc{D}^k)^{\dagger}\,\s\,\mc{B}+(\mc{I}-\mc{D}^{\dagger}\,\s\,\mc{D})\,\s\,\mc{Z}_1\\
&=\mc{D}^{\dagger}\,\s\,\mc{D}\,\s\,\mc{D}^{\ep}\,\s\,\mc{B}+(\mc{I}-\mc{D}^{\dagger}\,\s\,\mc{D})\,\s\,\mc{Z}_1=\mc{D}^{\dagger,\ep}\,\s\,\mc{B}+(\mc{I}-\mc{D}^{\dagger}\,\s\,\mc{D})\,\s\,\mc{Z}_1.
\endaligned
 \end{equation*}
 Hence $\mc{Z}_1$ is of the form \eqref{eqqq3.7} and completes the verification.
\end{proof}
\begin{theorem}\label{thmm3.7}
    % Let $\mc{D}\in \mathbb C^{\textbf{N}(s)\times \textbf{N}(s)}$  with $\mathrm{ind}(\mc{D})=k$. Then
    The constrained tensor equation
    \begin{equation*}
    \mc{D}\,\s\,\mc{Z}=\mc{B},\ \ \mc{B}\in \rg(\mc{D}^k),\ \ k=\mathrm{ind}(\mc{D})
    \end{equation*}
  is solvable, and its general solution is
   \begin{equation}\label{eqqq3.8}
        \mc{Z} = \mc{D}^{\dagger,\ep}\,\s\,\mc{B} +(\mc{I} - \mc{D}^{\dagger}\,\s\,\mc{D})\,\s\,\mc{Q}, \text{~where~}  \mc{Q} \in  \mathbb C^{\textbf{N}(s)} \mbox{ is arbitray}.
    \end{equation}
\end{theorem}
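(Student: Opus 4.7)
The plan is to derive this theorem as a direct corollary of Theorem \ref{thmm3.6}. The governing equation here is $\mc{D}\,\s\,\mc{Z}=\mc{B}$ under the constraint $\mc{B}\in\rg(\mc{D}^k)$, while Theorem \ref{thmm3.6} handles $\mc{D}\,\s\,\mc{Z}=\mc{D}^k\,\s\,(\mc{D}^k)^\dagger\,\s\,\mc{B}$ for arbitrary $\mc{B}$. My strategy is to show that under the range constraint, these two equations are literally identical, so that the full conclusion (solvability and the explicit general solution \eqref{eqqq3.8}) transfers verbatim.

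First, I would invoke Lemma \ref{range-stan}: the assumption $\mc{B}\in\rg(\mc{D}^k)$ produces a tensor $\mc{W}\in\mathbb{C}^{\textbf{N}(s)}$ with $\mc{B}=\mc{D}^k\,\s\,\mc{W}$. Then a one-line calculation using only the defining Moore-Penrose identity $\mc{D}^k\,\s\,(\mc{D}^k)^\dagger\,\s\,\mc{D}^k = \mc{D}^k$ yields
\begin{equation*}
\mc{D}^k\,\s\,(\mc{D}^k)^\dagger\,\s\,\mc{B} \;=\; \mc{D}^k\,\s\,(\mc{D}^k)^\dagger\,\s\,\mc{D}^k\,\s\,\mc{W} \;=\; \mc{D}^k\,\s\,\mc{W} \;=\; \mc{B}.
\end{equation*}
Thus the right-hand sides of the two equations agree, and Theorem \ref{thmm3.6} immediately furnishes both solvability and the general solution in the form \eqref{eqqq3.8}.

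For a self-contained alternative, one can instead mimic the proof of Theorem \ref{thmm3.6} directly: substituting \eqref{eqqq3.8} into $\mc{D}\,\s\,\mc{Z}$ kills the homogeneous term (since $\mc{D} - \mc{D}\,\s\,\mc{D}^\dagger\,\s\,\mc{D} = \mc{O}$ by the $(1)$-axiom) and reduces to $\mc{D}\,\s\,\mc{D}^{\dagger,\ep}\,\s\,\mc{B}$; then applying $\mc{D}\,\s\,\mc{D}^\dagger\,\s\,\mc{D} = \mc{D}$, Lemma \ref{lm4.2}(iii), and the Drazin commutation $\mc{D}\,\s\,\mc{D}^{\D}\,\s\,\mc{D}^k = \mc{D}^k$ collapses this to $\mc{D}^k\,\s\,(\mc{D}^k)^\dagger\,\s\,\mc{B}$, which equals $\mc{B}$ by the above range step. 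The converse (every solution has the form \eqref{eqqq3.8}) uses the standard decomposition $\mc{Z}_1 = \mc{D}^\dagger\,\s\,\mc{D}\,\s\,\mc{Z}_1 + (\mc{I}-\mc{D}^\dagger\,\s\,\mc{D})\,\s\,\mc{Z}_1$ and the substitution $\mc{D}\,\s\,\mc{Z}_1 = \mc{B}$.

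I do not anticipate a genuine obstacle: the argument really is a corollary of Theorem \ref{thmm3.6}. The only subtle check is the identity $\mc{D}^{\dagger,\ep}\,\s\,\mc{B} = \mc{D}^\dagger\,\s\,\mc{B}$ when $\mc{B}\in\rg(\mc{D}^k)$, which would be required to match the leading term $\mc{D}^\dagger\,\s\,\mc{B}$ arising in the uniqueness step with the stated $\mc{D}^{\dagger,\ep}\,\s\,\mc{B}$; this is handled by writing $\mc{B}=\mc{D}^k\,\s\,\mc{W}$ and using $\mc{D}^\ep\,\s\,\mc{D}^k = \mc{D}^\D\,\s\,\mc{D}^k$ (a consequence of Lemma \ref{lm4.2}(iii) with $l=k$) together with $\mc{D}\,\s\,\mc{D}^\D\,\s\,\mc{D}^k = \mc{D}^k$.
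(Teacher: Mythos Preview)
Your proposal is correct. Your primary route---observing that $\mc{B}\in\rg(\mc{D}^k)$ forces $\mc{D}^k\,\s\,(\mc{D}^k)^\dagger\,\s\,\mc{B}=\mc{B}$ and then invoking Theorem~\ref{thmm3.6} wholesale---is cleaner than what the paper does. The paper instead argues self-containedly, exactly along the lines of your ``alternative'': it writes $\mc{B}=\mc{D}^k\,\s\,\mc{S}$, verifies $\mc{D}\,\s\,\mc{D}^{\dagger,\ep}\,\s\,\mc{B}=\mc{D}\,\s\,\mc{D}^{\ep}\,\s\,\mc{D}^k\,\s\,\mc{S}=\mc{D}\,\s\,\mc{D}^{\D}\,\s\,\mc{D}^k\,\s\,\mc{S}=\mc{B}$, and for the converse decomposes $\mc{Z}_1=\mc{D}^\dagger\,\s\,\mc{B}+(\mc{I}-\mc{D}^\dagger\,\s\,\mc{D})\,\s\,\mc{Z}_1$ and then rewrites $\mc{D}^\dagger\,\s\,\mc{B}$ as $\mc{D}^{\dagger,\ep}\,\s\,\mc{B}$ via the chain $\mc{D}^\dagger\,\s\,\mc{D}^k\,\s\,\mc{S}=\mc{D}^\dagger\,\s\,\mc{D}^k\,\s\,(\mc{D}^k)^\dagger\,\s\,\mc{D}^k\,\s\,\mc{S}=\mc{D}^\dagger\,\s\,\mc{D}\,\s\,\mc{D}^{\ep}\,\s\,\mc{B}$---precisely the ``subtle check'' you flagged. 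Your reduction buys brevity and makes the logical dependence on Theorem~\ref{thmm3.6} explicit; the paper's version is independent of Theorem~\ref{thmm3.6} but repeats its mechanics.
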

\begin{proof}
    Let $\mc{B}\in \rg(\mc{D}^k)$. Then $\mc{B}=\mc{D}^k\,\s\,\mc{S}$ for some $\mc{S} \in  \mathbb C^{\textbf{N}(n)}$. Now
    \begin{equation*}
        \mc{D}\,\s\,\mc{Z}= \mc{D}\,\s\,\mc{D}^{\dagger}\,\s\,\mc{D}\,\s\,\mc{D}^{\ep}\,\s\,\mc{B}=\mc{D}\,\s\,\mc{D}^{\ep}\,\s\,\mc{D}^k\,\s\,\mc{S}=\mc{D}\,\s\,\mc{D}^{\D}\,\s\,\mc{D}^k\,\s\,\mc{S}=\mc{B}.
    \end{equation*}
 If $\mc{Z}_1$ be any solution of \eqref{eqqq3.8}, then
 \begin{equation*}
 \aligned
\mc{Z}_1&=\mc{D}^{\dagger}\,\s\,\mc{D}\,\s\,\mc{Z}_1+\mc{Z}_1-\mc{D}^{\dagger}\,\s\,\mc{D}\,\s\,\mc{Z}_1=\mc{D}^{\dagger}\,\s\,\mc{B}+(\mc{I}-\mc{D}^{\dagger}\,\s\,\mc{D})\,\s\,\mc{Z}_1\\ &=\mc{D}^{\dagger}\,\s\,\mc{D}^k\,\s\,\mc{S}+(\mc{I}-\mc{D}^{\dagger}\,\s\,\mc{D})\,\s\,\mc{Z}_1=\mc{D}^{\dagger}\,\s\,\mc{D}^k\,\s\,(\mc{D}^k)^{\dagger}\,\s\,\mc{D}^k\,\s\,\mc{S}+(\mc{I}-\mc{D}^{\dagger}\,\s\,\mc{D})\,\s\,\mc{Z}_1\\
&=\mc{D}^{\dagger}\,\s\,\mc{D}\,\s\,\mc{D}^\D\,\s\,\mc{D}^k\,\s\,(\mc{D}^k)^{\dagger}\,\s\,\mc{B}+(\mc{I}-\mc{D}^{\dagger}\,\s\,\mc{D})\,\s\,\mc{Z}_1\\
&=\mc{D}^{\dagger}\,\s\,\mc{D}\,\s\,\mc{D}^{\ep}\,\s\,\mc{B}+(\mc{I}-\mc{D}^{\dagger}\,\s\,\mc{D})\,\s\,\mc{Z}_1= {\mc{D}^{\dagger,\ep}\,\s\,\mc{B}+(\mc{I}-\mc{D}^{\dagger}\,\s\,\mc{D})\,\s\,\mc{Z}_1.}
\endaligned
 \end{equation*}
 Hence $\mc{Z}_1$ is of the form \eqref{eqqq3.8} and completes the proof.
\end{proof}

\begin{corollary}\label{corr3.8}
    Let %$\mc{D} \in  \mathbb C^{\textbf{N}(s)\times \textbf{N}(s)}$ and
    $k=\mathrm{ind}(\mc{D})$.
    Then $A^{\dagger,\ep}\,\s\,\mc{B}$ is the unique solution to
\begin{equation}\label{eqq5.9}   \mc{D}\,\s\,\mc{Z}=\mc{D}^k\,\s\,(\mc{D}^k)^{\dagger}\,\s\,\mc{B},\ \ \mc{B}\in \mathbb C^{\textbf{N}(n)},
\end{equation}
in the range space $\rg(\mc{D}^{\dagger}\,\s\,\mc{D}^k)$.
\end{corollary}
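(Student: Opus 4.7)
The plan is to verify Corollary \ref{corr3.8} in two short steps: membership of $\mc{D}^{\dagger,\ep}\,\s\,\mc{B}$ in the prescribed range space, and uniqueness of the solution inside that subspace. Both steps are essentially bookkeeping on top of Theorem \ref{thmm3.6} and Lemma \ref{lmm2.15}, so I do not expect any substantial obstacle; the main thing to be careful about is reading off the right representation of $\mc{D}^{\dagger,\ep}$.

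For existence, I would specialize Theorem \ref{thmm3.6} by taking $\mc{Q}=\mc{O}$, which immediately gives $\mc{D}\,\s\,(\mc{D}^{\dagger,\ep}\,\s\,\mc{B}) = \mc{D}^k\,\s\,(\mc{D}^k)^{\dagger}\,\s\,\mc{B}$, so $\mc{D}^{\dagger,\ep}\,\s\,\mc{B}$ solves \eqref{eqq5.9}. To see that this candidate actually lies in $\rg(\mc{D}^{\dagger}\,\s\,\mc{D}^k)$, I would invoke Lemma \ref{lmm2.15}(i) with $l=k$ to write
\begin{equation*}
\mc{D}^{\dagger,\ep}\,\s\,\mc{B} \;=\; \mc{D}^{\dagger}\,\s\,\mc{D}^{k}\,\s\,(\mc{D}^{k})^{\dagger}\,\s\,\mc{B} \;=\; \mc{D}^{\dagger}\,\s\,\mc{D}^{k}\,\s\,\mc{W},
\end{equation*}
with $\mc{W}:=(\mc{D}^{k})^{\dagger}\,\s\,\mc{B}$, which exhibits it as an element of $\rg(\mc{D}^{\dagger}\,\s\,\mc{D}^{k})$.

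For uniqueness, I would follow the template already used in the previous uniqueness proofs of Section \ref{SecSMS}. Let $\mc{Z}_1,\mc{Z}_2\in \rg(\mc{D}^{\dagger}\,\s\,\mc{D}^{k})$ both solve \eqref{eqq5.9}; then $\mc{D}\,\s\,(\mc{Z}_1-\mc{Z}_2)=\mc{O}$, so it suffices to show
\begin{equation*}
\nl(\mc{D}) \,\cap\, \rg(\mc{D}^{\dagger}\,\s\,\mc{D}^{k}) \;=\; \{\mc{O}\}.
\end{equation*}
Given any $\mc{V}$ in this intersection, write $\mc{V}=\mc{D}^{\dagger}\,\s\,\mc{D}^{k}\,\s\,\mc{W}$ for some $\mc{W}$. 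Applying $\mc{D}\,\s\,(\cdot)$ and using $\mc{D}\,\s\,\mc{D}^{\dagger}\,\s\,\mc{D}^{k}=\mc{D}^{k}$ (an immediate consequence of $\mc{D}\,\s\,\mc{D}^{\dagger}\,\s\,\mc{D}=\mc{D}$ together with $\mc{D}^{k}=\mc{D}\,\s\,\mc{D}^{k-1}$), the hypothesis $\mc{D}\,\s\,\mc{V}=\mc{O}$ becomes $\mc{D}^{k}\,\s\,\mc{W}=\mc{O}$. Premultiplying by $\mc{D}^{\dagger}$ yields $\mc{V}=\mc{D}^{\dagger}\,\s\,\mc{D}^{k}\,\s\,\mc{W}=\mc{O}$, which closes the argument.

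Alternatively, if one prefers the exact bracket style of the neighbouring proofs, the same conclusion can be packaged as
\begin{equation*}
\mc{Z}_1-\mc{Z}_2 \;\in\; \nl(\mc{D})\cap\rg(\mc{D}^{\dagger}\,\s\,\mc{D}^{k}) \;\subseteq\; \nl(\mc{D}^{\dagger,\ep}\,\s\,\mc{D})\cap\rg(\mc{D}^{\dagger,\ep}\,\s\,\mc{D}) \;=\; \{\mc{O}\},
\end{equation*}
using $\rg(\mc{D}^{\dagger}\,\s\,\mc{D}^{k})=\rg(\mc{D}^{\dagger,\ep})=\rg(\mc{D}^{\dagger,\ep}\,\s\,\mc{D})$ from the range characterization of $\mc{D}^{\dagger,\ep}$ established earlier in the paper. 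Either way, $\mc{D}^{\dagger,\ep}\,\s\,\mc{B}$ is the unique solution of \eqref{eqq5.9} lying in $\rg(\mc{D}^{\dagger}\,\s\,\mc{D}^{k})$.
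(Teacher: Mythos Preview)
Your proof is correct and essentially matches the paper's own argument: the paper also appeals to Theorem \ref{thmm3.6} for existence and then proves uniqueness via exactly the inclusion $\nl(\mc{D})\cap\rg(\mc{D}^{\dagger}\,\s\,\mc{D}^k)\subseteq \nl(\mc{D}^{\dagger,\ep}\,\s\,\mc{D})\cap\rg(\mc{D}^{\dagger,\ep}\,\s\,\mc{D})=\{0\}$ that you give as your alternative. Your write-up is in fact slightly more complete, since you explicitly check (via Lemma \ref{lmm2.15}(i)) that $\mc{D}^{\dagger,\ep}\,\s\,\mc{B}\in\rg(\mc{D}^{\dagger}\,\s\,\mc{D}^{k})$, a point the paper leaves implicit.
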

\begin{proof}
 According to Theorem \ref{thmm3.6}, it is enough to show the uniqueness of the solution. If $\mc{Z}_1,\mc{Z}_2\in\rg(\mc{D}^{\dagger}\,\s\,\mc{D}^k)$ are two solution of \eqref{eqq4.3}, then
\begin{equation*}
    \mc{Z}_1-\mc{Z}_2\in \nl(\mc{D})\cap\rg(\mc{D}^{\dagger}\,\s\,\mc{D}^k) \subseteq \nl(\mc{D}^{\dagger,\ep}\,\s\,\mc{D})\cap \rg(\mc{D}^{\dagger,\ep}\,\s\,\mc{D}) = \{0\}.
\end{equation*}
Hence $\mc{D}^{\dagger,\ep}\,\s\,\mc{B}$ is the unique solver of \eqref{eqq5.9} in $\rg(\mc{D}^{\dagger}\,\s\,\mc{D}^k)$.
\end{proof}

Using similar lines of the proof of Corollary \ref{corr3.8}, we can show the below results.
\begin{corollary}
 %   Let $\mc{D} \in  \mathbb C^{\textbf{N}(s)\times \textbf{N}(s)}$ and $k=\mathrm{ind}(\mc{D})$.
 The tensor $A^{\dagger,\ep}\,\s\,\mc{B}$ is the unique solution to the multilinear system
\begin{equation*}
\mc{D}\,\s\,\mc{Z}=\mc{B},~\mc{B}\in \rg(\mc{D}^k),\ \ k=\mathrm{ind}(\mc{D})
\end{equation*}
in the range space $\rg(\mc{D}^{\dagger}\,\s\,\mc{D}^k)$.
\end{corollary}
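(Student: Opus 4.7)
My plan is to derive the corollary as a direct specialization of Theorem \ref{thmm3.7}. That theorem already asserts solvability of $\mc{D}\,\s\,\mc{Z}=\mc{B}$ whenever $\mc{B}\in\rg(\mc{D}^k)$ and exhibits the general solution in the parametric form \eqref{eqqq3.8}. What remains, therefore, is (a) to verify that the particular solution $\mc{D}^{\dagger,\ep}\,\s\,\mc{B}$ lies in the prescribed range space $\rg(\mc{D}^{\dagger}\,\s\,\mc{D}^k)$, and (b) to establish uniqueness inside that range space.

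For step (a) I will invoke Lemma \ref{lmm2.15}(i) with $l=k$ to rewrite $\mc{D}^{\dagger,\ep} = \mc{D}^{\dagger}\,\s\,\mc{D}^k\,\s\,(\mc{D}^k)^{\dagger}$, which gives
\[
\mc{D}^{\dagger,\ep}\,\s\,\mc{B} = \mc{D}^{\dagger}\,\s\,\mc{D}^k\,\s\,\bigl((\mc{D}^k)^{\dagger}\,\s\,\mc{B}\bigr)\in \rg(\mc{D}^{\dagger}\,\s\,\mc{D}^k).
\]
For step (b) I will mirror the argument of Corollary \ref{corr3.8}: if $\mc{Z}_1,\mc{Z}_2\in\rg(\mc{D}^{\dagger}\,\s\,\mc{D}^k)$ are two solutions of $\mc{D}\,\s\,\mc{Z}=\mc{B}$, then $\mc{Z}_1-\mc{Z}_2\in \nl(\mc{D})\cap\rg(\mc{D}^{\dagger}\,\s\,\mc{D}^k)$, and I will show this intersection embeds in $\nl(\mc{D}^{\dagger,\ep}\,\s\,\mc{D})\cap\rg(\mc{D}^{\dagger,\ep}\,\s\,\mc{D})$, which is trivial because $\mc{D}^{\dagger,\ep}$ is an outer inverse of $\mc{D}$ (so $\mc{D}^{\dagger,\ep}\,\s\,\mc{D}$ is idempotent, and range and nullspace of an idempotent intersect only in zero).

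The one non-automatic step I expect to be the main obstacle is the inclusion $\rg(\mc{D}^{\dagger}\,\s\,\mc{D}^k)\subseteq\rg(\mc{D}^{\dagger,\ep}\,\s\,\mc{D})$. I intend to handle it via the chain of identities $\mc{D}^{\dagger}\,\s\,\mc{D}^k = \mc{D}^{\dagger}\,\s\,\mc{D}^k\,\s\,(\mc{D}^k)^{\dagger}\,\s\,\mc{D}^k = \mc{D}^{\dagger,\ep}\,\s\,\mc{D}^k = \mc{D}^{\dagger,\ep}\,\s\,\mc{D}\,\s\,\mc{D}^{k-1}$, obtained from Lemma \ref{lmm2.15}(i) and valid for $k=\mathrm{ind}(\mc{D})\geq 1$; this immediately yields the desired range inclusion. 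The complementary nullspace inclusion $\nl(\mc{D})\subseteq\nl(\mc{D}^{\dagger,\ep}\,\s\,\mc{D})$ is immediate, and idempotency of $\mc{D}^{\dagger,\ep}\,\s\,\mc{D}$ finalizes uniqueness, completing the proof.
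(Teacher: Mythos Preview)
Your proposal is correct and follows essentially the same route the paper indicates: the paper simply remarks that the result is proved ``using similar lines of the proof of Corollary \ref{corr3.8}'', i.e., invoke Theorem \ref{thmm3.7} for existence and then run the idempotent range--nullspace intersection argument for uniqueness. You have filled in two details the paper leaves implicit---the membership $\mc{D}^{\dagger,\ep}\,\s\,\mc{B}\in\rg(\mc{D}^{\dagger}\,\s\,\mc{D}^k)$ via Lemma \ref{lmm2.15}(i), and the justification of the inclusion $\rg(\mc{D}^{\dagger}\,\s\,\mc{D}^k)\subseteq\rg(\mc{D}^{\dagger,\ep}\,\s\,\mc{D})$---but the overall strategy is identical.
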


\begin{corollary}
    Let %$\mc{D} \in  \mathbb C^{\textbf{N}(s)\times \textbf{N}(s)}$ and
$k=\mathrm{ind}(\mc{D})$. Then $A^{\ep,\dagger}\,\s\,\mc{B}$ is the unique solution to the multilinear system
\begin{equation*}    \mc{D}\,\s\,\mc{Z}=\mc{B},~\mc{B}\in \rg(\mc{D}^k),
\end{equation*}
in the range space $\rg(\mc{D}^{\ep}\,\s\,\mc{D}^k)$.
\end{corollary}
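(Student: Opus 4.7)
The plan is to mirror Corollary~\ref{corr3.8} in the CEPMP setting, replacing $\mc{D}^{\dagger,\ep}$ by $\mc{D}^{\ep,\dagger}$ and the range space $\rg(\mc{D}^{\dagger}\,\s\,\mc{D}^k)$ by $\rg(\mc{D}^{\ep}\,\s\,\mc{D}^k)$. I would split the argument into three parts: (i)~$\mc{D}^{\ep,\dagger}\,\s\,\mc{B}$ solves the system $\mc{D}\,\s\,\mc{Z} = \mc{B}$, (ii)~it lies in the prescribed range space, and (iii)~it is the unique solver there.

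For (i) and (ii), since $\mc{B}\in\rg(\mc{D}^k)$ I write $\mc{B} = \mc{D}^k\,\s\,\mc{S}$ and simplify
\[
\mc{D}^{\ep,\dagger}\,\s\,\mc{B} \;=\; \mc{D}^{\ep}\,\s\,\mc{D}\,\s\,\mc{D}^{\dagger}\,\s\,\mc{D}^k\,\s\,\mc{S} \;=\; \mc{D}^{\ep}\,\s\,\mc{D}^k\,\s\,\mc{S},
\]
using the Moore--Penrose identity $\mc{D}\,\s\,\mc{D}^{\dagger}\,\s\,\mc{D}^k = \mc{D}^k$. The right-hand side manifestly lies in $\rg(\mc{D}^{\ep}\,\s\,\mc{D}^k)$, which settles (ii). Applying $\mc{D}$ reduces (i) to the identity $\mc{D}\,\s\,\mc{D}^{\ep}\,\s\,\mc{D}^k = \mc{D}^k$, which I would obtain from the representation $\mc{D}^{\ep} = \mc{D}^{\D}\,\s\,\mc{D}^k\,\s\,(\mc{D}^k)^{\dagger}$ in Lemma~\ref{lm4.2}(iii) (with $l=k$), combined with $\mc{D}^k\,\s\,(\mc{D}^k)^{\dagger}\,\s\,\mc{D}^k = \mc{D}^k$ and $\mc{D}\,\s\,\mc{D}^{\D}\,\s\,\mc{D}^k = \mc{D}^{\D}\,\s\,\mc{D}^{k+1} = \mc{D}^k$; this same chain already appears in the proof of Theorem~\ref{thmm3.7}.

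For (iii), following the template of Corollary~\ref{corr3.8}, I take two candidate solutions $\mc{Z}_1,\mc{Z}_2 \in \rg(\mc{D}^{\ep}\,\s\,\mc{D}^k)$ and note that $\mc{W} := \mc{Z}_1 - \mc{Z}_2$ lies in $\nl(\mc{D}) \cap \rg(\mc{D}^{\ep}\,\s\,\mc{D}^k)$. Writing $\mc{W} = \mc{D}^{\ep}\,\s\,\mc{D}^k\,\s\,\mc{Y}$ and applying $\mc{D}$ yields $\mc{D}^k\,\s\,\mc{Y} = \mc{D}\,\s\,\mc{D}^{\ep}\,\s\,\mc{D}^k\,\s\,\mc{Y} = 0$, whence $\mc{W} = \mc{D}^{\ep}\,\s\,(\mc{D}^k\,\s\,\mc{Y}) = 0$. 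Equivalently, one can copy the chain from Corollary~\ref{corr3.8} verbatim via $\nl(\mc{D})\cap\rg(\mc{D}^{\ep}\,\s\,\mc{D}^k) \subseteq \nl(\mc{D}^{\ep,\dagger}\,\s\,\mc{D}) \cap \rg(\mc{D}^{\ep,\dagger}\,\s\,\mc{D}) = \{0\}$, using the equality $\mc{D}^{\ep}\,\s\,\mc{D}^k = \mc{D}^{\ep,\dagger}\,\s\,\mc{D}^k$ (insert $\mc{D}\,\s\,\mc{D}^{\dagger}$) for the first inclusion and idempotency of $\mc{D}^{\ep,\dagger}\,\s\,\mc{D}$ (recalling $\mc{D}^{\ep,\dagger}$ is an outer inverse) for the trivial intersection. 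The main obstacle is simply the identity $\mc{D}\,\s\,\mc{D}^{\ep}\,\s\,\mc{D}^k = \mc{D}^k$, which drives both existence and the range collapse; once it is in hand, everything else is routine book-keeping in the spirit of the preceding corollaries.
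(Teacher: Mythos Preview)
Your proposal is correct and follows exactly the approach the paper intends: the paper does not write out a separate proof but simply refers the reader to ``similar lines of the proof of Corollary~\ref{corr3.8}'', and your argument is precisely that adaptation, with the key identity $\mc{D}\,\s\,\mc{D}^{\ep}\,\s\,\mc{D}^k=\mc{D}^k$ playing the same role here that it does in Theorem~\ref{thmm3.7}.
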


\section{Numerical examples}\label{SecExamples}

Various TGBIs are implemented and tested using the implementation developed in MATLAB, R2022b.
Examples were developed by a personal computer with a CPU [12-Core Intel Xeon W   3.3 GHz], 96GB of memory, and the macOS Ventura operating system.

Now, we restate the Frobenius norm $|| \cdot||_F$  of a tensor $\mc{D}\in\mathbb{C}^{\textbf{M}(m)\times \textbf{N}(n)}$, as proposed in \cite{BrazellT}.
\begin{equation*}
\|\mc{D}\|_F=\left(\sum_{\textbf{M}(m),\textbf{N}(n)}\left|a_{\textbf{M}(m),\textbf{N}(n)}\right|^2\right)^{1/2}.
\end{equation*}
The residual errors associated to different generalized inverses are summarised in below Table \ref{tab:error}.
\begin{table}[htb!]
    \centering
    \caption{Residual errors associate to generalized inverses}
    \vspace*{0.2cm}
    \renewcommand{\arraystretch}{1.1}
    \begin{tabular}{l|l}
    \hline
     $\mc{E}_{\dagger} = \|\mc{D}\,\s\,\mc{D}^{\dagger}\,\s\,\mc{ B} -\mc{ B}\|_F$  & $\mc{E}_{D} = \|\mc{D}\,\s\,\mc{D}^{\D}\,\s\,\mc{ B} -\mc{ B}\|_F$\\
     \hline
      $\mc{E}_{\ep} =\|\mc{D}\,\s\,\mc{D}^{\ep}\,\s\,\mc{ B} -\mc{ B}\|_F$ &
      $\mc{E}_{c, \dagger} =\|\mc{D}\,\s\,\mc{D}^{c,\dagger}\,\s\,\mc{ B} -\mc{ B}\|_F$  \\
       \hline
       $\mc{E}_{\dagger,\D} = \|\mc{D}\,\s\,\mc{D}^{\dagger,\D}\,\s\,\mc{ B} -\mc{ B}\|_F$&$\mc{E}_{D, \dagger} = \|\mc{D}\,\s\,\mc{D}^{\D,\dagger}\,\s\,\mc{ B} -\mc{ B}\|_F$\\
         \hline
       $\mc{E}_{\dagger, \ep} =\|\mc{D}\,\s\,\mc{D}^{\dagger,\ep}\,\s\,\mc{ B} -\mc{ B}\|_F$  & $\mc{E}_{\ep, \dagger} = \|\mc{D}\,\s\,\mc{D}^{\ep,\dagger}\,\s\,\mc{ B} -\mc{ B}\|_F$\\
         \hline
    \end{tabular}
       \label{tab:error}
\end{table}

\begin{example}\rm\cite[Example 5.3]{bilat}\label{pde-1}
Consider the two-dimensional Poisson's equation
\begin{equation*}\label{pdeq-1}
    -\frac{\partial^2 u}{\partial x^2}-\frac{\partial^2 u}{\partial y^2}=f(x,y),\ \ (x,y)\in \Omega=[0,1]\times [0,1]
\end{equation*}
 with $f(x,y)=0$ on the boundary $\partial \Omega$.
An application of the finite difference method for both second order partial derivatives leads to the tensor equation
 \begin{equation*}
     \mc{D}*_2\mc{Z}=\mc{B},\mbox{ where } ~\mc{Z}\in\mathbb{R}^{n\times n},~\mc{B}\in\mathbb{R}^{n\times n}, \mbox{ and }\mc{D}=\mathrm{tridiagonal}(\mc{P},\mc{Q},\mc{P})\in\mathbb{R}^{n\times n\times n\times n}
 \end{equation*}
with
\begin{equation*}
    \mc{P}=\begin{pmatrix}
-2 & -1 & & 0\\
-1 & \ddots & \ddots & \\
& \ddots & \ddots & -1 \\
0 & & -1 & -2 \end{pmatrix},~~\mc{Q}=\begin{pmatrix}
24 & -4 & & 0\\
-4 & \ddots & \ddots & \\
& \ddots & \ddots & -4 \\
0 & & -4 & 24 \end{pmatrix}.
\end{equation*}
Here the tensor $\mc{D}$ is non-singular, and we create three singular tensors $\mc{D}_i=\mc{D}\bigoplus \mc{N}_i$ with different indices, where \\
$\mc{N}_1=\begin{pmatrix}
    0 & 1 & 0\\
    0 & 0 & 1\\
    0 &0 &0
\end{pmatrix},~\mc{N}_2=\begin{pmatrix}
    0 & 1 & 0 & 0\\
    0 & 0 & 1 & 0\\
    0 &0 &0 & 1\\
    0 & 0 & 0& 0
\end{pmatrix}$ and $\mc{N}_3=\begin{pmatrix}
    0 & 1 & 0 & 0& 0\\
    0 & 0 & 1 &0 & 0\\
    0 &0 &0 & 1 & 0\\
    0& 0 & 0 & 0& 1\\
    0 & 0 &0 & 0& 0
\end{pmatrix}$. \\
The notation nnz($\mc{D}$) is used to compute the total number of nonzero elements in the tensor $\mc{D}$.
It is easy to see that $\mathrm{ind}(\mc{D}_1)=3$, $\mathrm{ind}(\mc{D}_2)=4$ and $\mathrm{ind}(\mc{D}_3)=5$.
For a fixed $\mc{B} \in \rg(\mc{D}^{k})$, and running the {\em MATLAB} code 30 times, we compute the average residual errors and mean CPU times (in seconds), which are presented in Table \ref{tab:err-comp}.

\begin{table}[!htb]
    \centering
      \caption{Comparison analysis of residual error and mean CPU time}
      \vspace*{0.2cm}
       \renewcommand{\arraystretch}{1.1}
   \begin{tabular}{|c |c|c|c|c|c|c }
\hline
$i$ &Order of $\mc{D}_i$& $\mathrm{ind}(\mc{D}_i)$ & nnz($\mc{D}_i$) & Residual error &Mean CPU time \\
\hline
\multirow{8}{*}{1}&\multirow{8}{*}{$19\times 337\times 19\times 337$} & \multirow{8}{*}{3}&  \multirow{8}{*}{56646} &$\mc{E}_{\dagger}= 9.6371e^{-10}$  & 0.627935 \\ \cline{5-6}
& &  & &$\mc{E}_{D}=4.0672e^{-09}$ & 0.633671\\ \cline{5-6}
& &  & &  $\mc{E}_{\ep}=9.7280e^{-09}$& 0.625599\\ \cline{5-6}
& &  & &$\mc{E}_{c,\dagger}=4.5532e^{-09}$ & 0.624649  \\ \cline{5-6}
& &  & &  $\mc{E}_{\dagger,D}= 4.2339e^{-09}$ & 0.633521 \\ \cline{5-6}
& & & & $\mc{E}_{D,\dagger}=4.2277e^{-09}$ & 0.644424 \\\cline{5-6}
& &  & & $\mc{E}_{\dagger,\ep}=9.7934e^{-09}$& 0.638693  \\\cline{5-6}
& &  & &$\mc{E}_{\ep,\dagger}=9.7907e^{-09}$ & 0.651126  \\\cline{5-6}
\hline
\multirow{8}{*}{2}&\multirow{8}{*}{$74\times 100\times 74\times 100$} & \multirow{8}{*}{4}&  \multirow{8}{*}{65539} &$\mc{E}_{\dagger}=2.1644e^{-08}$  & 0.833736 \\ \cline{5-6}
& &  & &$\mc{E}_{D}=9.7385e^{-08}$ &0.867778\\ \cline{5-6}
& &  & &  $\mc{E}_{\ep}=3.1213e^{-07}$& 0.879092 \\ \cline{5-6}
& &  & &$\mc{E}_{c,\dagger}=1.0686e^{-07}$ & 0.869206   \\ \cline{5-6}
& &  & &  $\mc{E}_{\dagger,D}=1.0012e^{-07}$ & 0.851221  \\ \cline{5-6}
& & & & $\mc{E}_{D,\dagger}=1.0006e^{-07}$ & 0.858702 \\\cline{5-6}
& &  & & $\mc{E}_{\dagger,\ep}=3.1247e^{-07}$&  0.869249 \\\cline{5-6}
& &  & &$\mc{E}_{\ep,\dagger}=3.1245e^{-07}$ & 0.853883 \\\cline{5-6}
\hline
\multirow{8}{*}{3}&\multirow{8}{*}{$87\times 115\times 87\times 115$} & \multirow{8}{*}{5}&
\multirow{8}{*}{88808} &$\mc{E}_{\dagger}=6.4945 e^{-07}$  & 2.243536 \\ \cline{5-6}
& &  & &$\mc{E}_{D}= 4.5643e^{-07}$& 2.228076 \\ \cline{5-6}
& &  & &  $\mc{E}_{\ep}= 1.2041e^{-07}$& 2.233148 \\ \cline{5-6}
& &  & &$\mc{E}_{c,\dagger}= 4.7523e^{-07}$ &   2.224729\\ \cline{5-6}
& &  & &  $\mc{E}_{\dagger,D}=4.6161e^{-07}$ & 2.228884  \\ \cline{5-6}
& & & & $\mc{E}_{D,\dagger}= 4.6155e^{-07}$ &  2.252194 \\\cline{5-6}
& &  & & $\mc{E}_{\dagger,\ep}=1.2071e^{-07}$&  2.242604 \\\cline{5-6}
& &  & &$\mc{E}_{\ep,\dagger}=1.2071e^{-07}$ & 2.245622 \\\cline{5-6}
\hline
\end{tabular}
      \label{tab:err-comp}
\end{table}
\end{example}

\begin{example}\rm\cite[Example 6]{AsishRJ}\label{sol-pde}
    Consider the two-dimensional Poisson's equation
\begin{equation}\label{PDE11}
    -\frac{\partial^2 u}{\partial x^2}-\frac{\partial^2 u}{\partial y^2}=f(x,y),~(x,y)\in \Omega=[0,1]\times [0,1]
\end{equation}
 with $\frac{\partial u}{\partial {\bf n}}=0$ on the boundary $\partial\Omega$, where ${\bf n}$ stands for the normal to the boundary $\partial\Omega$.
 The finite difference method for both second order partial derivatives lead to the following tensor equation
 \begin{equation}\label{multi}
     \mc{D}*_2\mc{Z}=\mc{B}, \ \ \mc{Z}\in\mathbb{R}^{n\times n},~\mc{B}\in\mathbb{R}^{n\times n}.
 \end{equation}
The coefficient tensor $\mc{D}\in\mathbb{R}^{n\times n\times n\times n}$ is of the form
\begin{equation*}\label{F-ex-ten}
\mc{D}=\mc{I}_n\kronecker \mc{B} +\mc{C}\kronecker \mc{I}_n+\mc{D},
\end{equation*}
where $\mc{I}_n\in\mathbb{R}^{n\times n}$ is the identity tensor,
\begin{equation*}
    \mc{B}=\mathrm{tridiagonal}\left(-1,0,-1\right) = \begin{pmatrix}
0 & -1 & & 0\\
-1 & \ddots & \ddots & \\
& \ddots & \ddots & -1 \\
0 & & -1 & 0 \end{pmatrix}=
 \mc{C}\in \mathbb{R}^{n\times n},
\end{equation*}
and $\mc{D}\in\mathbb{R}^{n\times n\times n\times n}$ is a diagonal tensor in which diagonal entries change according to the number of grid points chosen.
It can be easily verified that the tensor $\mc{D}$ is singular and $\mathrm{ind}(\mc{D}) = 1$.
Therefore, $\mc{D}^{\D}=\mc{D}^{\#}$, $\mc{D}^{\ep}=\mc{D}^{\core}$, $\mc{D}^{\dagger}=\mc{D}^{c,\dagger}=\mc{D}^{\dagger,\core}$, $\mc{D}^{\core}=\mc{D}^{\#,\dagger}=\mc{D}^{\core,\dagger},~\mc{D}^{\dagger,\#}=\mc{D}^{\dagger}\,\s\,\mc{D}\,\s\,\mc{D}^{\#}$.

\begin{figure}[!htb]
\begin{center}
\begin{tabular}{cc}
{\hspace*{-0.5cm}\scalebox{0.55}{\includegraphics{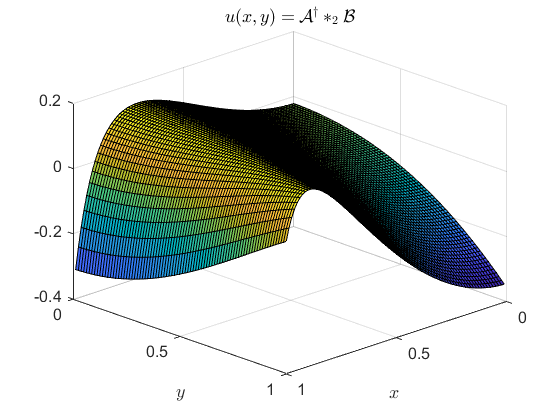}}}&
 {\hspace*{-0.5cm}\scalebox{0.55}{\includegraphics{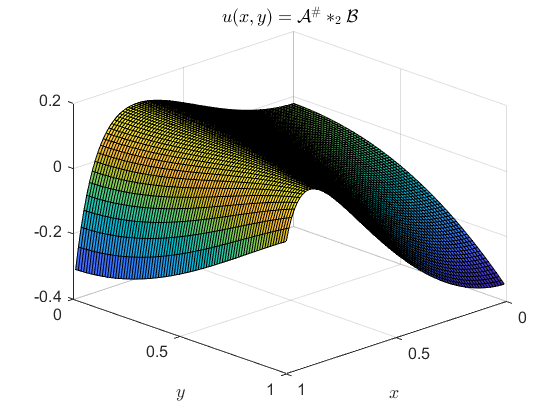}}}\\
  {\hspace*{-0.5cm}\scalebox{0.55}{\includegraphics{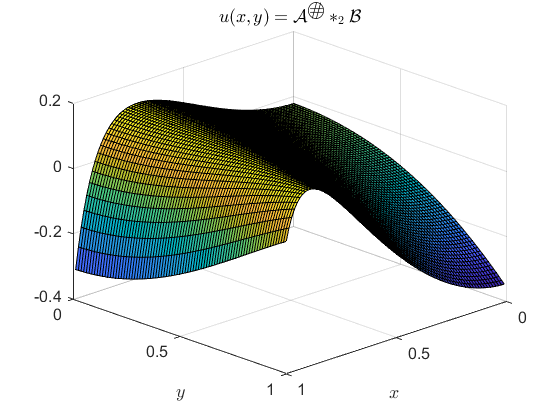}}}&
 {\hspace*{-0.5cm}\scalebox{0.55}{\includegraphics{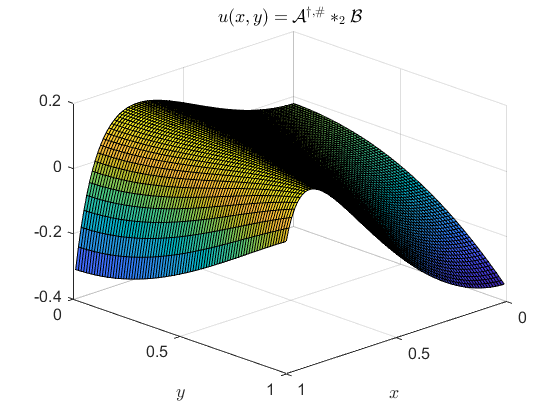}}}
\end{tabular}
\vskip -6pt 
\caption{Solutions of the multilinear system \eqref{multi} for different generalized inverses when $n=80$.}
\label{fig-1}
\end{center}
\end{figure}

\begin{figure}[!htb]
\begin{center}
\begin{tabular}{cc}
{\hspace*{-0.5cm}\scalebox{0.5}{\includegraphics{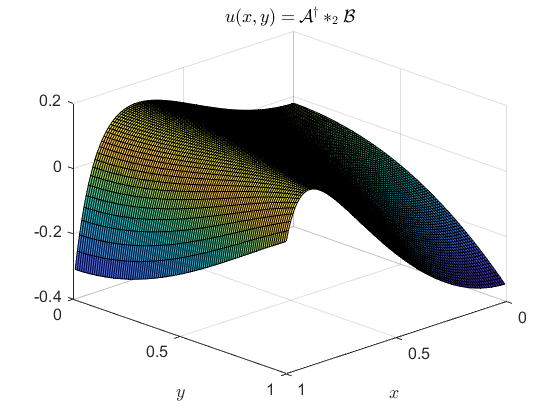}}}&
 {\hspace*{-0.5cm}\scalebox{0.5}{\includegraphics{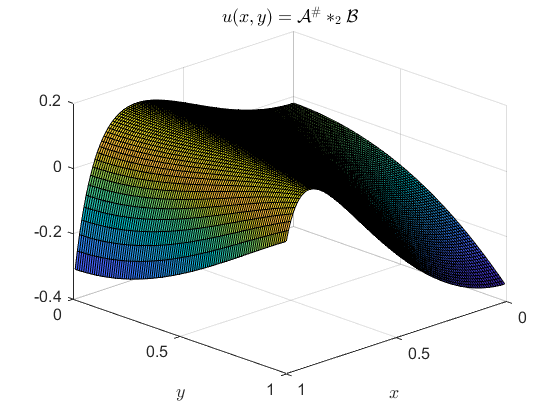}}}\\
  {\hspace*{-0.5cm}\scalebox{0.5}{\includegraphics{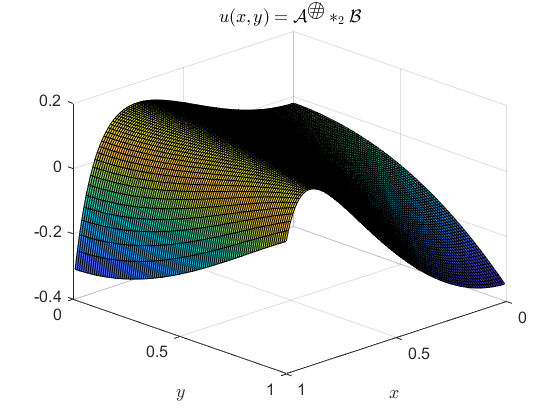}}}&
 {\hspace*{-0.5cm}\scalebox{0.5}{\includegraphics{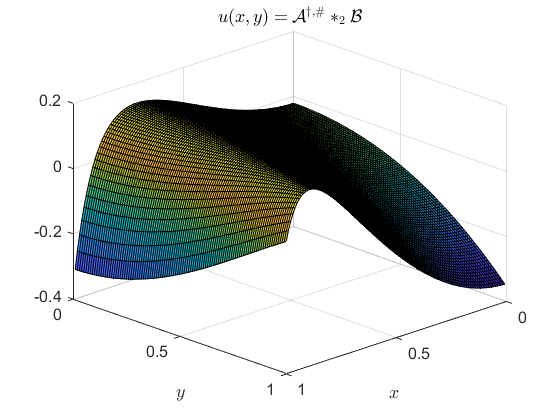}}}
\end{tabular}
\vskip -6pt 
\caption{Solutions of the multilinear system \eqref{multi} for different generalized inverses when $n=100$.}
\label{fig-2}
\end{center}
\end{figure}
\end{example}
The notation $R^{T}_{errors}$ is used for residual error, and mean CPU time (in seconds) is denoted by  Mean Time$^{T}$, where the computations are done in the tensor framework with respect to the Einstein product as defined in \eqref{Eins}. Tensor computation residual errors and mean CPU time for different values of $n$, are given in Table \ref{tab:mat-ten-comp}.

\begin{table}[!htb]
    \centering
    \caption{Tensor computation residual errors and CPU time}
    \vspace*{0.1cm}
    \begin{tabular}{|c|c|c|c|c}
    \hline
     Order of $\mc{D}$  & $R^{T}_{errors}$ & Mean Time$^{T}$ \\
         \hline 
       \multirow{4}{*}{$80\times 80\times 80\times 80$}& $E_{\dagger}=1.9751e^{-12}$ &  1.617058  \\

          &$E_{\#}=1.9635e^{-11}$ &  1.624961 \\

          & $E_{\core}=1.7284e^{-11}$&   1.676089\\

          & $E_{\dagger,\#}=1.7186e^{-11}$&  1.670214 \\
         \hline
          \multirow{4}{*}{$100\times 100\times 100\times 100$}& $E_{\dagger}= 3.0966e^{-12}$ &2.18825  \\

          &$E_{\#}=1.3118e^{-11}$ &2.205259\\

          & $E_{\core}=1.7072e^{-11}$& 2.278892 \\

          & $E_{\dagger,\#}= 1.69869e^{-11}$ & 2.341240  \\
         \hline
            \multirow{4}{*}{$120\times 120\times 120\times 120$}& $E_{\dagger}=4.3320e^{-12}$ & 6.167582 \\

          &$E_{\#}=3.0744e^{-11}$ &  6.141843 \\

          & $E_{\core}=3.4814e^{-12}$& 6.320301  \\

          & $E_{\dagger,\#}=3.4353e^{-12}$&6.049212  \\
         \hline
    \end{tabular}
    \label{tab:mat-ten-comp}
\end{table}

Additionally, we discuss the approximate solution the Poisson's equation \eqref{PDE11} with respect to different choices of generalized inverses and order $n$ tensors.
By considering a fixed $\mc{B}\in \rg(\mc{D})$, figures \ref{fig-1} and \ref{fig-2} represent plots of the solutions ($\mc{D}^{\dagger}*_2\mc{B},~\mc{D}^{\#}*_2\mc{B},~\mc{D}^{\core}*_2\mc{B},~\mc{D}^{\dagger,\#}*_2\mc{B}$) to the multilinear system $\mc{D}*_2\mc{X}=\mc{B}$.
Plots of solutions to the multilinear system \eqref{multi} based on different generalized inverses in the case $n=80$ are Figure \ref{fig-1}.
Plots of solutions to the multilinear system \eqref{multi} based on different generalized inverses in the case $n=100$ are Figure \ref{fig-2}.
Figure \ref{fig-1} and Figure \ref{fig-2} show that the use of different generalized inverses does not affect the final results.

\section{Conclusion}\label{SecConclusion}

We introduce  {the} class of tensor generalized bilateral inverses (TGBIs) under the Einstein product. The TGBIs class is an extension of the class of generalized bilateral inverses (GBIs) in a matrix environment. Moreover, the TBGI class includes so far considered composite generalized inverses (CGIs) for matrices and tensors. Applications of TBGIs in solving certain multilinear systems have been presented.
{
 Some characterizations and representations of TGBI, along with numerical examples, were studied. Further, we have developed a few characterizations of known CGIs, such as CMP, DMP, MPD, MPCEP, and CEPMP.} One of the possible topics for further research is investigating the perturbation theory on the bilateral generalized inverses.
{Weighted tensor generalized bilateral inverses may be another interesting area for future research.}

\medskip

{ {
\noindent{\bf Funding}
\begin{itemize}
\item Ratikanta Behera is supported by the Science and Engineering Research Board (SERB), Department of Science and Technology, India, under Grant No. EEQ/2022/001065.
\item Jajati Keshari Sahoo is supported by the Science and Engineering Research Board (SERB), Department of Science and Technology, India, under Grant No. SUR/2022/004357.
\item Predrag Stanimirovi\' c is supported by Science Fund of the Republic of Serbia (No. 7750185, Quantitative Automata Models: Fundamental Problems and Applications - QUAM). \\
Further, Predrag Stanimirovi\' c is supported by the Ministry of Science and Higher Education of the Russian Federation (Grant No. 075-15-2022-1121).
\end{itemize}
}

\smallskip
\noindent {\bf Declaration of competing interests.}
The authors declare that they have no known competing financial interests or personal relationships that could have appeared to influence the work reported in this paper.

\bibliographystyle{abbrv}
\bibliography{reference}
\end{document}